\numberwithin{equation}{section}
\theoremstyle{plain}
\newtheorem{thm}{Theorem}[section]
\newtheorem{lem}[thm]{Lemma}
\newtheorem*{lem*}{Lemma}
\newtheorem*{prop*}{Proposition}
\newtheorem*{thm*}{Theorem}
\newtheorem*{clm1*}{Claim 1}
\newtheorem*{clm2*}{Claim 2}
\newtheorem*{clm3*}{Claim 3}
\newcommand{\be}{\begin{equation}}
\newcommand{\ee}{\end{equation}}
\newcommand{\bear}{\begin{eqnarray*}}
\newcommand{\eear}{\end{eqnarray*}}
\newcommand{\bearr}{\begin{eqnarray}}
\newcommand{\eearr}{\end{eqnarray}}
\newcommand{\bi}{\begin{itemize}}
\newcommand{\ei}{\end{itemize}}
\newcommand{\bn}{\begin{enumerate}}
\newcommand{\en}{\end{enumerate}}
\newcommand{\I}{\item}
\newcommand{\grad}{{\nabla}}
\newcommand{\ep}{\epsilon }
\newcommand{\R}{{\mathbb {R}}}
\newcommand{\p}{{\partial }}
\theoremstyle{definition}
\theoremstyle{remark}
\newtheorem{remark}{Remark}[section]
\newcommand{\na}{\nabla}
\newcommand{\lt}{\left}
\newcommand{\rt}{\right}
\newcommand{\mf}{\mathbf}
\newcommand{\ml}{\mathcal}
\newcommand{\ms}{\mathscr}
\newcommand{\bs}{\boldsymbol}
\newcommand{\tq}{{q}}
\newcommand{\bq}{\bar{q}}
\begin{document}

\fontsize{12}{18pt plus.4pt minus.3pt}\selectfont

\title{Ground State Patterns and Phase Transitions of Spin-1
Bose-Einstein Condensates via $\Gamma$-Convergence Theory
}
\author[,1,3]{I-Liang Chern\footnote{chern@math.ntu.edu.tw}}
\author[,2]{Chiu-Fen Chou\footnote{chouchiufen@gmail.com}}
\author[,1]{Tien-Tsan Shieh\footnote{ttshieh@ncts.ntu.edu.tw}}
\affil[1]{National Center for Theoretical Science, National Taiwan University, Taipei, 106, Taiwan}
\affil[2]{Department of Applied Mathematics, National Chiao Tung University, Hsinchu 300, Taiwan}
\affil[3]{Institute of Applied Mathematical Science, National Taiwan University, Taipei, 106, Taiwan}
\date{}
\maketitle
\begin{abstract}
 We develop an analytic theory  for the ground state patterns and their phase transitions  for  spin-1 Bose-Einstein condensates  on a bounded domain in the presence of  a uniform magnetic  field.  Within the Thomas-Fermi approximation, these ground state patterns are composed of four basic states: magnetic state, nematic state, two-component state and three-component state, separated by interfaces.  A complete phase diagram of the ground state patterns are found analytically with  different quadratic Zeeman energy $q$ and total magnetization $M$  for both ferromagnetic and antiferromagnetic systems.  Using the $\Gamma$-convergence technique, it is found that the  semi-classical limits of these ground states minimize an
 energy functional which consists of interior interface energy plus a boundary contact energy.
As a consequence,  the interface between two different basic states has constant mean curvature, and the contact angle between the interface and the boundary obeys  Young's relation.
\end{abstract}

%
%
%
%
%
%
%
%
%
%
%
%
%
%
%
%
%
%
%
%

\section{Introduction}
\subsection{The Gross-Pitaevskii equation for spin-1 BECs}
In the 1920s, Bose and Einstein~\cite{Bose1924,Einstein1925}  predicted a new state of matter.  At very low temperature, very dilute Boson gases such as alkai gases tend to occupy their lowest quantum state.  The de Broglie wave of the bosons  is coherent and their lowest quantum state becomes apparent, called  Bose-Einstein condensation (BEC). It was only recently, in 1995,  BEC was  first realized in laboratories by two groups independently, Cornell and  Wieman as well as Ketterle~\cite{Anderson95,Davis95}.  Through the mean field approximation and Hartree's ansatz, the mean-field state of an $N$-particle boson gases  can be described by a complex order parameter $\psi(x,t)$. Its dynamics is modelled by the Gross-Pitaevskii equation~\cite{Dalfovo99,Gross61,Pitaevskii61}:
\[
i \hbar \p_t \psi = \frac{\delta \ml{E}[\psi]}{\delta \psi^*},  \qquad\textrm{where } \ml{E}[\psi] = \int H(\psi)\, dx,
\]
and the Hamiltonian of the system is written as
\[
H(\psi) :=  \frac{\hbar^2}{2m}|\grad \psi|^2 + V(x) |\psi|^2 + \frac{\beta}{2}|\psi|^4.
\]
Here, $\psi^*$ denotes the complex conjugate of $\psi$,  and $\delta \ml{E}/\delta \psi^*$ is the variation of $\ml{E}$ with respect to $\psi^*$.  The function $V(x)$  is the trap potential satisfying $V(x) \to \infty$ as $|x|\to \infty$. The parameter $m$ is the mass of the boson, and $\beta$ is the product of the particle number $N$ and interaction strength.   This interaction  is attractive when $\beta < 0$ and repulsive when $\beta > 0$.
This mean field  model was rigorously  justified as a limit of the $N$-particle quantum system by Lieb et al.~\cite{Lieb2001,Lieb2001-2} for the ground state cases and by Erd\"os et al.~for the dynamic cases~\cite{Yau2010}.

When an optical trap is applied to confine BECs,
all their hyperfine spin states are active. Such a BEC with an internal spin freedom is called a spinor BEC~\cite{ho1998,Ohmi98,Law98}  and   was realized in laboratories with $^{23}$Na atoms in 1998~\cite{stenger1998,stamper1998,isoshima1999,bloch2008}.
In the mean field theory, a physical state of a spin-$f$ BEC is described by  $(2f+1)$-components of complex order parameters $\Psi = (\psi_{f},\psi_{f-1},\dots,\psi_0\dots,\psi_{-f})^T$ and its dynamics is governed by a multi-component Gross-Pitaeviskii equation~\cite{ho1998,Ohmi98}.  In this paper, we concern ourself with a quantum system of the  spin-1 BEC,  whose dynamics are described by a generalized Gross-Pitaevskii equation
\be
i \hbar \p_t \Psi = \frac{\delta \ml{E}[\Psi]}{\delta \Psi^\dagger}, \qquad \ml{E}[\Psi] = \int H(\Psi)\, dx,
\ee
where $\Psi = (\psi_1,\psi_0,\psi_{-1})^T$  and the Hamiltonian is given in the form
\begin{align*}
H(\Psi) &:=  H_{kin}(\Psi)+H_{pot}(\Psi)+H_{n}(\Psi)+H_s(\Psi) \\
&:= \frac{\hbar^2}{2m}|\grad \Psi|^2 + V(x) |\Psi|^2 + \frac{c_n}{2}|\Psi|^4 + \frac{c_s}{2} |\Psi^\dagger \mathbf{F} \Psi|^2.
\end{align*}
Here, $\Psi^\dagger := (\psi_1^*,\psi_0^*,\psi_{-1}^*)$ and $\mathbf{F}=(F_x,F_y,F_z)$ is the spin-1 Pauli operator:
\[
F_x = \frac{1}{\sqrt{2}}\lt(\begin{array}{ccc}0&1&0\\1&0&1\\0&1&0\end{array}\rt),
F_y = \frac{i}{\sqrt{2}}\lt(\begin{array}{ccc}0&-1&0\\1&0&-1\\0&1&0\end{array}\rt),
F_z = \lt(\begin{array}{ccc}1&0&0\\0&0&0\\0&0&-1\end{array}\rt).
\]
The term $\frac{c_n}{2}|\Psi|^4$ represents  the spin-independent interaction between bosons, whereas $\frac{c_s}{2}|\Psi^\dagger \mathbf{F}\Psi|^2$ stands for  the spin-exchange interaction between bosons.  The spin-independent interaction is attractive if $c_n < 0$ and repulsive if $c_n > 0$.
This BEC system is called ferromagnetic if $c_s <0$ and antiferromagnetic  if $c_s > 0$.  In experiments, an example of a ferromagnetic system is the
alkali atom $^{87}$Rb with $c_n\approx 7.793$, $c_s\approx -0.0361$~\cite{Anderson95},   whereas for the
alkali atom $^{23}$Na, $c_n \approx 15.587$ and $c_s\approx  0.4871$~\cite{Davis95}, and it is an antiferromagnetic system.

We are interested in the ground state patterns of spin-1 BECs in the presence of an external  uniform magnetic field.  The interaction of atoms  with the applied magnetic field, say $B \hat{z}$, introduces an additional energy, called the Zeeman energy:
\[
\ml{E}_{Zee} [\Psi]= \int H_{Zee}(\Psi)\,dx, \qquad H_{Zee}(\Psi) := \sum_{j=-1}^1 E_j(B) |\psi_j|^2,
\]
where  $E_j(B)$ is the Zeeman energy shift for each component under the magnetic field $B\hat{z}$. It is convenient for later discussion to write the Zeeman energy $H_{Zee}$ as
\[
H_{Zee} = \sum_{j=-1}^1 E_j(B)  |\psi_{-1}|^2
= q (|\psi_1|^2 + |\psi_{-1}|^2) + p (|\psi_1|^2 -  |\psi_{-1}|^2) + E_0 (|\psi_1|^2+|\psi_{0}|^2|+\psi_{-1}|^2),
\]
where
\[
p = \frac{1}{2} (E_{-1}-E_1), \
q = \frac{1}{2} (E_{-1}+E_1-2E_0).
\]
The parameters $p$ and $q$ are called linear and quadratic Zeeman energy, respectively, in the physics literature.

Notice that the Gross-Pitaevskii equation for spin-1 BECs possess two invariants: the total mass
\be \label{constraintN}
\ml{N}[\Psi] := \int |\Psi|^2 \, dx = \int |\psi_1|^2 + |\psi_0|^2 + |\psi_{-1}|^2\, dx,
\ee
and the total magnetization
\be \label{constraintM}
\ml{M}[\Psi] := \int |\psi_1|^2 - |\psi_{-1}|^2\, dx,
\ee
which can be derived by direct calculation.
The ground states of the system are those physical states of lowest  energy, given  fixed total mass and fixed total magnetization.  This defines a variational problem for ground states in the presence of a uniform magnetic field:
\begin{equation}
\min_{\Psi}\left\{ \ml{E}[\Psi]:\,   \ml{N}[\Psi]=N, \ml{M}[\Psi] = M\right\} \label{eq:1.4}.
\end{equation}
where the energy takes the form
\begin{align*}
\ml{E}[\Psi] & = \int \lt(H_{kin}+H_{pot}+H_n+H_s+ {q(|\psi_1|^2+|\psi_{-1}|^2)}\rt)\, dx + E_0N + pM\\
   & =\int \frac{\hbar^2}{2m}|\grad \Psi|^2 + V(x) |\Psi|^2 + \frac{c_n}{2}|\Psi|^4 + \frac{c_s}{2} |\Psi^\dagger \mathbf{F} \Psi|^2 + q (|\psi_1|^2+|\psi_{-1}|^2)\,dx + E_0N + pM.
\end{align*}
It is  observed that the parameter $p$ plays no role in minimization  for  fixed total magnetization $M$.
Thus, our goal is to study the ground state patterns and their phase transitions in the parameter plane $(q,M)$.

\subsection{A brief  survey of ground state problems}
There are several studies concerned with ground states of spin-1 BEC systems.
The existence of ground states for spin-1 BECs in three dimensions with $|c_s|< c_n$ was given in ~\cite{LinChern2013}.  The non-existence result in three dimensions for $c_n < 0$ was given in ~\cite{BaoCai2012}.  Other existence/non-existence results in one dimension are given in ~\cite{Cao2011}.

In the case of no applied  magnetic field, it is proven that the ground state is the so-called single mode approximation (SMA) for a ferromagnetic system.  That is, the ground state has the form $(\gamma_1,\gamma_0,\gamma_{-1})\psi$, where $\gamma_i\in\mathbb R^+$, and  $\psi$ is a scalar field.
On the other hand, the ground state is a so-called  two-component state  $(2C)$ for antiferromagnetic systems.   The above results are given numerically in ~\cite{BaoLim2008,ChenChernWang2011}, and analytically in ~\cite{LinChern2011}.

When there is an applied magnetic field, it is found that there is a phase transition from 2C to 3C, i.e. all three components are not zeros , as $q> q_{cr}>0$ for antiferromagnetic systems.  This phase transition phenomenon was also observed in the laboratory~\cite{stenger1998,Jacob2012}.  It was found numerically in ~\cite{Zhang-Yi-You2003,BaoLim2008,CCW2014} and proven analytically in ~\cite{LinChern2013}.

Due to the fact that the coefficient $\frac{\hbar^2}{2m}\ll 1$, people have paid attention to the Thomas-Fermi approximation of \eqref{eq:1.4} which simply ignores its kinetic energy. Under such an approximation, existence results for ground states and phase transitions  with $|c_s| << c_n$ have also been studied by many physicists, see~\cite{Zhang-Yi-You2003,matuszewski2008,matuszewski2009,matuszewski2010}.

Without ignoring the kinetic energy, the problem \eqref{eq:1.4} is a variational problem of singular perturbation type which is similar to the Van der Waals-Cahn-Hilliard gradient theory of phase transition for a fluid system of binary phases confined to a bounded domain under  isothermal conditions. Gurtin \cite{G2} had conjectured about the asymptotic behavior of the variational model
$$ \inf\left\{ \int_\Omega \left( \epsilon |\nabla u|^2 + \frac{1}{\epsilon}W(u)\right):\, u\in W^{1,2}(\Omega),\int_\Omega u(x)\,dx=m \right\},$$
where $W:\mathbb R^n\to\mathbb R^+$ is a double-well potential and $m$ is the total mass of the fluid. The problem have been studied extensively by many authors through de Giorgi's $\Gamma$-convergence theory. The scalar case ($n=1$) is studied by \cite{M,KS,S}. In \cite{FT,S,S1}, the vectorial case ($n>1$) is studied. With a given Dirichlet boundary condition, a sharp interface limit of the energy functional is considered in \cite{I2,ORS,Shieh}.
 A minimal interface problem arising from a two-component BEC in the regime of strong coupling and strong segregation was studied in \cite{AR} via $\Gamma$-convergence. They have formulated the problem in term of total density and spin functions, which convert the energy into a sum of two weighted Cahn-Hilliard energy.

\subsection{Contribution of this paper}
This paper considers ground state patterns and their phase transition diagram on the
$q$-$M$ plane for the case $|c_s| < c_n$.  The contribution of this paper includes
\begin{itemize}
\item
we find all possible ground states configurations for the spin-1 BEC system in its Thomas-Fermi approximation and give a complete phase diagram on the parameter space $(q,M)$;
\item a sharp interface limit of the BEC system is derived through de Gorgi's $\Gamma$-convergence.
\end{itemize}
This paper is organized as the follows. Section 2 is a reformulation and simplification of the problem.  Section 3 and 4 are devoted to the Thomas-Fermi approximations and the $\Gamma$-convergence results.

\section{Formulation of the problem}
In this Section, the variational problem \eqref{eq:1.4} for ground states of the spin-1 BEC  system is formulated into a real-valued variational problem with dimensionless coefficients.
\paragraph{Dimensional Analysis}
We minimize the energy function $\ml{E}[\Psi]$ under the two constraints (\ref{constraintN}), (\ref{constraintM}).  The corresponding Euler-Lagrange equations read
\bear
(\mu+\lambda)\psi_1 &=& \lt[-\frac{\hbar^2}{2m}\grad^2 + V(x)+  q+c_0 n\rt]\psi_1 +c_2 (n_1+n_0-n_{-1})\psi_1 + c_2 \psi_{-1}^*{\psi_0}^2 \\
\mu\psi_0 &=& \lt[-\frac{\hbar^2}{2m}\grad^2 + V(x)+c_0 n\rt]\psi_0 +2 c_2 (n_1-n_{-1})\psi_0 + c_2 \psi_{1}
\psi_{-1}{\psi_0^*}\\
(\mu-\lambda)\psi_{-1} &=& \lt[-\frac{\hbar^2}{2m}\grad^2 + V(x)+ q+c_0 n\rt]\psi_{-1} +c_2 (n_{-1}+n_0-n_{1})\psi_{-1} + c_2 \psi_{1}^*\psi_0^2.
\eear
Here,  $\mu$ and $\lambda$ are the two Lagrange multipliers corresponding to the two constraints.
We perform rescaling: $x=Lx'$, $\psi = L^{-3/2} \psi'$, then compare the dimensions of the first equation:
\[
 \frac{\hbar^2}{2m} L^{-3/2}L^{-2}{\grad'}^2\psi' + V L^{-3/2}\psi'+ q L^{-3/2}\psi' + c_n L^{-3}n'\psi'_1 + c_s L^{-3} .
 \]
From this, we define new parameters:
\begin{align*}
\ep^2 &= \frac{\hbar^2}{2m}L^{-7/2} \\
V' &= VL^{-3/2} \\
q' &= qL^{-3/2} \\
c_n' &= c_n L^{-3}  \\
c_s' &= c_s L^{-3}.
\end{align*}
Dropping the primes, we get the Hamiltonian $H_\ep$ defined in (\ref{eq:Hamilton_ep}) below.
%
\paragraph{Bounded domain problem with zero potential}
The trap potential $V$ in the laboratory satisfies $V(x) \to \infty $ as $|x|\to \infty$, which leads to the exponential decay of the ground states.  This fact can be derived from standard elliptic PDE theory~\cite{Lieb2001, LinChern2013}.   In particular, the quadratic potential (which is commonly  used in laboratory)  is  close to zero potential near the trapped center.
Thus, it is reasonable to consider the following potential with infinite well
\[
V(x) = \lt\{\begin{array}{ll} 0 & \mbox{ if } x \in \Omega\\ \infty & \mbox{ if } x\ne \Omega,\end{array}\right.
\]
where $\Omega$ is a smooth bounded domain in $\R^3$.  This corresponds to the constrained variational problem in a bounded domain with zero Dirichlet boundary condition.
\paragraph{Reduction to a real-valued problem}
To study the ground states, we also notice that we can limit ourselves to those order parameters $\psi_j$ with constant phases.  In fact, if we express $\psi_j = u_j e^{i\theta_j}$, $u_j \ge 0$, $j = 1, 0, -1$, then the kinetic energy is $|\grad \psi_j |^2 = |\grad u_j |^2 + u_j^2 |\grad \theta_j|^2$, which has minimal energy when $\grad \theta_j \equiv 0$.   In this situation, the only term in the Hamiltonian $H$ which involves phases is
\[
H_s = \frac{c_s}{2} \left((u_1^2-u_{-1}^2)^2 + 2 u_0^2(u_1^2+u_{-1}^2 + 2u_1 u_{-1}\cos(\Delta \theta)\right)
\]
where
$\Delta\theta = \theta_1+\theta_{-1}-2\theta_0$.
The Hamiltonian $H_s$ has a minimal value when
\[
\Delta \theta = \left\{ \begin{array}{ll}
0 & \mbox{ if }c_s < 0,\\
\pi & \mbox{ if }c_s > 0.
\end{array}\right.
\]
The resulting $H_s$ becomes
\be
H_s(\mf{u}) = \frac{c_s}{2} \left((u_1^2-u_{-1}^2)^2 + 2 u_0^2(u_1-\mbox{sgn}(c_s)u_{-1})^2\right).
\ee

\paragraph{Summary of the Problem}
To summarize the above simplifications, we shall consider the following constrained variational problem:
\[
(P_\ep)\quad \left\{ \begin{array}{l}
\inf \lt\{ \ml{E}_\ep[\mf{u}] := \int_\Omega H_\ep(\mf{u})\, dx \ \mid \ \mf{u}\in (H^1_0(\Omega,\R_+))^3 \cap L^4(\Omega,\R_+^3)\rt\}\\
\mbox{ subject to the constraints }\\
\ml{N}[\mf{u}]:= \int_{\Omega} (u_1^2+u_0^2+u_{-1}^2) \, dx = N,\
\ml{M}[\mf{u}]:=\int_{\Omega} (u_1^2-u_{-1}^2) \, dx = M.
\end{array}\right.
\]
Here, the Hamiltonian $H_\ep(\mf{u})$ is
\be \label{eq:Hamilton_ep}
H_\ep(\mf{u}) := \ep^2 \sum_{j=-1}^1|\na u_j|^2 +  \frac{c_n}{2} |\mf{u}|^4 + \frac{c_s}{2}\big[2u_0^2(u_1-\textrm{sgn}(c_s)\,u_{-1})^2+(u_1^2-u_{-1}^2)^2\big]+
 q(u_1^2+u_{-1}^2).
\ee

%
%
%
%
%
%
\section{The ground states in Thomas-Fermi approximation}\label{sec:TF}
The semi-classical regime is the case where $\ep$ is small (for instance,  choosing large $L$).  The Thomas-Fermi regime is the case where $\ep = 0$.   Thus, we split the Hamiltonian into
\be
H_\ep(\mf{u}) = \ep^2 \sum_{j=-1}^1|\na u_j|^2 + H_{TF}(\mf{u}).
\ee
A Thomas-Fermi solution is a measurable function $\mf{u}$ on $\Omega$ which solves the constrained variational problem:
\[
(P_{0})\quad \left\{ \begin{array}{l}
\inf \lt\{ \ml{E}_{TF}[\mf{u}] := \int_\Omega H_{TF}(\mf{u})\, dx \ \mid \ \mf{u}\in  (L^2(\Omega,\R_+))^3\rt\}\\
\mbox{ subject to the constraints } \
\ml{N}[\mf{u}]= N,\
\ml{M}[\mf{u}] = M.
\end{array}\right.
\]

It is expected that the Thomas-Fermi solutions are piecewise constant solutions consisting of one or two pure states in the following forms:
\bi
\I Nematic State (NS), if $\mf{u}=(0,u_0,0)$
\I Magnetic State (MS), if $\mf{u}= (u_1, 0,0)$ or $(0,0,u_{-1})$
\I Two-component State (2C), if $\mf{u} = (u_1,0,u_{-1})$
\I Three-component State (3C), if $\mf{u}=(u_1,u_0,u_{-1})$.
\ei
Here, $u_i$ above denotes a nonzero value.  We shall give a complete phase diagram of the ground states in this section and describe the ground state patterns in the next section. For a given total magnetization $M$, there exist two critical numbers $q_1,q_2$ such that we have the following description of the phase diagram:
\begin{itemize}
 \item For $c_s>0, q_2<q$, the Thomas-Fermi solution is a  $NS+MS$ mixed state.
 \item For $c_s>0, q_1<q<q_2$, the Thomas-Fermi solution is a  $NS+2C$ mixed state.
 \item For $c_s>0,q<q_1$, the Thomas-Fermi solution is a  $2C$ pure state.
 \item For $c_s<0,q<0$, the Thomas-Fermi solution is a  $MS+MS$ mixed state.
 \item For $c_s<0,0<q$, the Thomas-Fermi solution is a  $3C$ pure state.
\end{itemize}
Precisely, the notation $NS+MS$ means that there is a measurable set $U\subset \Omega$ such that
$$\mf{u}(x)= \mf{a}\,\chi_U(x) + \mf{b}\,\chi_{\Omega\setminus U}(x)$$
where $\chi_U$ and $\chi_{\Omega\setminus U}$ are characteristic functions and the vectors $\mf{a}=(0,u_0,0)$ and $\mf{b}=(u_1,0,0)$ or $(0,0,u_{-1})$ are two constant states.

\paragraph{Normalization and Notations}
We may divide $H_{TF}$ by $c_n$, and set $\alpha := c_s/c_n$ and {\em rename $q/c_n$ still by $q$}.
We consider $\Omega$ to be a bounded set with smooth boundary.  With this normalization, the Thomas-Fermi Hamiltonian becomes
\begin{equation}
H_{TF}(\mf{u}) := \frac{1}{2}({u_1^2+u_0^2+u_{-1}^2})^2 +  \frac{\alpha}{2}\big[2u_0^2(u_1- \textrm{sgn}(\alpha)\,u_{-1})^2+(u_1^2-u_{-1}^2)^2\big] + {q}(u_1^2+u_{-1}^2). \label{eq:3.3}
\end{equation}
We  denote the ratio $|U|/|\Omega|$ by $r$, the mass per unit volume $N/|\Omega|$ by $n$, and magnetization per unit volume $M/|\Omega|$ by $m$, respectively.

It is observed that the role exchanging  between $u_1$ and $u_{-1}$ does not change the form of \eqref{eq:3.3} and the constraint of the total mass \eqref{constraintN}. However, it changes the sign of the total magnetization \eqref{constraintM}. Because of this symmetric property, we only need to consider the case $m\ge 0$ without loss of generality .

\subsection{Antiferromagnetic BEC $(\alpha>0)$: $q_2<q$ implies $NS+MS$ state}\label{case:NS-MS}

\begin{thm}
  Suppose $0 < \alpha\le1$, $m\ge 0$.  Let
  $$
  q_2=\left( 1- \frac{1}{(\alpha+1)^{1/2}} \right)\, \left( n+\left(  (\alpha+1)^{1/2}-1\right)m\right).
  $$
 Then for $q > q_2$,  the global minimizer of the constrained variational problem ($P_{0}$) in the finite domain $\Omega$  takes the form
  $$ \mathbf u = \mathbf a\, \chi_U + \mathbf b\, \chi_{\Omega\backslash U}$$
  where $U\subset\Omega$ is a measurable set of size
  \be |U|= \frac{(\alpha+1)^{1/2}m}{n+ ((\alpha+1)^{1/2}-1)m} |\Omega|, \label{eq:vol1}\ee
  $$ \mathbf a =(\sqrt{ \frac{A}{(\alpha+1)^{1/2}} },0,0),\qquad \mathbf b=(0, \sqrt{A},0)$$
  and
  $$ A=  \left(n+\left( (\alpha+1)^{1/2}-1\right)m\right).$$
\end{thm}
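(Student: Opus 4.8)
The plan is to exploit that $(P_0)$ contains no gradient term, so it is a \emph{pointwise} minimization coupled only through the two integral constraints $\ml N[\mf u]=N$ and $\ml M[\mf u]=M$. First I would introduce Lagrange multipliers $\mu$ (for the mass) and $\lambda$ (for the magnetization) and pass to the reduced integrand
\[
\tilde H(\mf u) := H_{TF}(\mf u) - \mu(u_1^2+u_0^2+u_{-1}^2) - \lambda(u_1^2-u_{-1}^2),
\]
so that $\ml E_{TF}[\mf u] = \int_\Omega \tilde H(\mf u)\,dx + \mu N + \lambda M$ for every admissible $\mf u$. Since $N,M$ are fixed, minimizing $\ml E_{TF}$ is equivalent to making $\tilde H(\mf u(x))$ as small as possible at a.e.\ $x$. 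Writing $\beta:=(\alpha+1)^{1/2}$ and $A:=n+(\beta-1)m$, imposing stationarity of $\tilde H$ at the candidate states $\mf a=(\sqrt{A/\beta},0,0)$ and $\mf b=(0,\sqrt A,0)$ pins down the multipliers: the $u_0$-equation at $\mf b$ forces $\mu=A$, and the $u_1$-equation at $\mf a$ then forces $\lambda=(\beta-1)A+q$. A direct substitution (using $\alpha=\beta^2-1$) shows $\tilde H(\mf a)=\tilde H(\mf b)=-A^2/2$, so the two candidate pure states are tied in reduced energy.

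The heart of the argument, and the step I expect to be the main obstacle, is the global claim: with these $\mu,\lambda$ and under $q>q_2$,
\[
\tilde H(\mf u) \ge -\tfrac{A^2}{2}\qquad\text{for all }\mf u\in\R_+^3,\quad\text{with equality iff }\mf u\in\{\mf a,\mf b\}.
\]
To prove it I would reduce the dimension by optimizing first in the nematic component: with $P:=u_1^2+u_{-1}^2$ and $t:=u_0^2$ held against the others, $\tilde H$ is quadratic in $t$ with minimizer $t=\max\{0,\mu-P-\alpha(u_1-u_{-1})^2\}$, leaving a two-variable problem in $(u_1,u_{-1})$. The nonnegative cross term $\alpha u_0^2(u_1-u_{-1})^2$ vanishes exactly on the rays $u_{-1}=0$ or $u_1=0$, which is where $\mf a$ and $\mf b$ sit, so the decisive competitors are the states with $t=0$ (the $2C$/MS slice). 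On that slice $\tilde H=\tfrac12 P^2+\tfrac{\alpha}{2}S^2+(q-\mu)P-\lambda S$ with $S:=u_1^2-u_{-1}^2$; its unconstrained stationary point $S=\lambda/\alpha$, $P=\mu-q$ is a genuine interior $2C$ state \emph{only} when $\lambda/\alpha\le\mu-q$, and a short computation shows this feasibility inequality is equivalent to $q\le q_2$. Thus precisely for $q>q_2$ the constrained minimum over the slice is pushed to the boundary $S=P$ (that is $u_{-1}=0$), where it equals $-A^2/2$ and is attained at $\mf a$; the mirror boundary $u_1=0$ gives a value $\ge -A^2/2$ using $\alpha\le1$, and the corner $P=0$ gives $\mf b$. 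Checking that no state with $t>0$ and $0<S<P$ (a genuine $3C$ interior critical point) beats $-A^2/2$ completes the quartic analysis; this is the only place where one must push through the critical-point equations, and where the hypothesis $\alpha\le1$ is genuinely used.

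Finally I would assemble the pieces. For any admissible $\mf u$ the lower bound gives $\ml E_{TF}[\mf u]\ge -\tfrac{A^2}{2}|\Omega|+\mu N+\lambda M$, a bound independent of $\mf u$; the explicit state $\mf a\,\chi_U+\mf b\,\chi_{\Omega\setminus U}$ attains it because it takes only the values $\mf a,\mf b$, each realizing $\tilde H=-A^2/2$. Conversely, equality forces $\mf u(x)\in\{\mf a,\mf b\}$ a.e., so any minimizer has the stated two-phase form with $U=\{\mf u=\mf a\}$. It remains to fix $|U|$: the magnetization constraint reads $|U|\,(A/\beta)=M$, giving $|U|=\beta m|\Omega|/A$, which is exactly \eqref{eq:vol1}; substituting into the mass constraint reproduces $A=n+(\beta-1)m$, so both constraints are met simultaneously and $0\le|U|\le|\Omega|$ follows from $0\le m\le n$. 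This closes the proof.
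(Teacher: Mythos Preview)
Your framework coincides with the paper's: shift $H_{TF}$ by linear combinations of the conserved densities (your $\mu,\lambda$ are exactly the paper's $-\beta_1/2,-\beta_2/2$), and then show the shifted integrand has global minimum $-A^2/2$ attained only at $\{\mf a,\mf b\}$. The final assembly (lower bound plus two-phase form plus constraints to fix $|U|$ and $A$) is also the same.

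Where you diverge is in how the global inequality $\tilde H\ge -A^2/2$ is established, and this is where your proposal has a genuine gap. You optimize out $t=u_0^2$ and then do a boundary analysis on the $t=0$ slice; that part is fine (modulo a slip: on the $t=0$ slice, the corner $P=0$ gives $\mf 0$, not $\mf b$; the state $\mf b$ lives in the $t>0$ region). But the region $t>0$ with $u_1,u_{-1}>0$ is precisely where the work lies, and you only say one ``must push through the critical-point equations''. After your reduction the value there is
\[
\tfrac12 P^2+\tfrac{\alpha}{2}S^2+(q-\mu)P-\lambda S-\tfrac12\bigl[\mu-P-\alpha(u_1-u_{-1})^2\bigr]^2,
\]
and since $(u_1-u_{-1})^2=P-\sqrt{P^2-S^2}$ this is not polynomial in $(P,S)$; a critical-point analysis here is not short, and you give no argument that it terminates favourably. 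Without this, the ``heart of the argument'' is missing.

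The paper avoids this entirely by an explicit sum-of-squares decomposition: after choosing $\beta_1,\beta_2$ it writes $2W$ (your $2\tilde H+A^2$) as a sum of four manifestly nonnegative terms plus $4(q-q_2)u_{-1}^2$, so for $q>q_2$ nonnegativity is immediate and the zero set is read off directly. This algebraic completion of squares replaces your case analysis wholesale and is what makes the argument clean; I would recommend aiming for such a decomposition rather than the dimension-reduction route.
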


\begin{proof}
\bn
\I First, we rewrite $H_{TF}$ as  the sum of several perfect squares:
\begin{align*}
2H_{TF} &= (u_1^2+u_0^2+u_{-1}^2)^2 + \alpha(u_1^2-u_{-1}^2)^2 + 2 \alpha u_0^2 (u_1-u_{-1})^2 + 2q (u_1^2 + u_{-1}^2)\\
&= (1+\alpha)u_1^4 + u_0^4 + (1+\alpha)u_{-1}^4 + 2(1-\alpha)u_1 ^2 u_{-1}^2 \\
& \quad+ 2 u_0^2\left[ (u_1^2+u_{-1}^2) + \alpha (u_1-u_{-1})^2\right] + 2q (u_1^2+u_{-1}^2)\\
&= \left( (1+\alpha)^{1/2} u_1^2 +u_0^2 + \frac{1-\alpha}{(1+\alpha)^{1/2}}u_{-1}^2 \right)^2 + \frac{4\alpha}{1+\alpha} u_{-1}^4 + 2q (u_1^2+u_{-1}^2) \\
& \quad + 2u_0^2\left[\left(1+\alpha -(1+\alpha)^{1/2}\right)u_1^2 + \left(1+\alpha -\frac{1-\alpha}{(1+\alpha)^{1/2}}\right) u_{-1}^2 - 2\alpha u_1 u_{-1}\right]
\end{align*}
In the last term, the quadratic form is non-negative because
\begin{align*}
&\left[\left(1+\alpha -(1+\alpha)^{1/2}\right)u_1^2 + \left(1+\alpha -\frac{1-\alpha}{(1+\alpha)^{1/2}}\right) u_{-1}^2 - 2\alpha u_1 u_{-1}\right]  \\
&=\left[ \left(  \left( (\alpha+1) - (\alpha+1)^{1/2} \right)^{1/2} u_1  -\frac{\alpha}{ \left( (\alpha+1) - (\alpha+1)^{1/2} \right)^{1/2} } u_{-1} \right)^2 \right.  \\
 &\left. \qquad \qquad+  \frac{  2\alpha +4 -2(\alpha+1)^{1/2} }{(\alpha+1) - (\alpha+1)^{1/2} }\, u_{-1}^2\right] \ge 0,
\end{align*}
 Here, we have used $\alpha+1-(\alpha+1)^{1/2}>0$ and $2\alpha +4 -2(\alpha+1)^{1/2} >0$ for $0<\alpha\le 1$.
\I
 Because of the constraints of total mass and total magnetization, we can convert the variational problem to an equivalent one by adding the terms involving $\beta_1 \int_\Omega (u_1^2 + u_0^2 + u_{-1}^2)\,dx$, $\beta_2 \int_\Omega (u_1^2-u_{-1}^2)\,dx $ and some constant $\int_\Omega A^2 \,dx$.   Namely, we can replace $H_{TF}$ with another energy density defined by
\be
W(\mf{u}) := H_{TF}(\mf{u}) + \frac{\beta_1}{2} (u_1^2+u_0^2+u_{-1}^2)+\frac{\beta_2}{2} (u_1^2-u_{-1}^2) + \frac{A^2}{2}, \label{W:NS-MS}
\ee
so that the minimization problem ($P_{0}$) is equivalent to
\[
\inf \int_\Omega W(\mf{u}) \, dx \mbox{ subject to } (\ref{constraintN}),(\ref{constraintM}).
\]
We shall choose the parameters $\beta_1,\beta_2$ and $A$ so that
\be
W(\mf{a})=W(\mf{b}) = 0, \ W(\mf{u})> 0 \mbox{ otherwise}.
\ee
We organize $W$ as
 \begin{align*}
 2W &=   \left( (\alpha+1)^{1/2} u_1^2 + u_0^2 + \frac{1-\alpha}{(1{{+}}\alpha)^{1/2}} u_{-1}^2 - A\right)^2 \nonumber \\
& + 2u_0^2 \left(  \left( (\alpha+1) - (\alpha+1)^{1/2} \right)^{1/2} u_1  -\frac{\alpha}{ \left( (\alpha+1) - (\alpha+1)^{1/2} \right)^{1/2} } u_{-1} \right)^2  \nonumber \\
& +  \frac{  2\alpha +4 -2(\alpha+1)^{1/2} }{(\alpha+1) - (\alpha+1)^{1/2} }\, u_{-1}^2 u_0^2   + \frac{4\alpha}{\alpha+1}\, u_{-1}^4 \label{eq:3.6}\\
& + \left( \beta_1+\beta_2+ 2q+ 2 A (\alpha+1)^{1/2}  \right)  u_1^2
      + (\beta_1 + 2 A) u_0^2
      + \left( \beta_1-\beta_2+{2q} + 2 A \frac{1-\alpha}{ (\alpha+1)^{1/2}}  \right)    u_{-1}^2.\nonumber
\end{align*}
\I
Now, we choose $\beta_1$ and $\beta_2$ to satisfy
$$
\left\{
\begin{array}{lcr}
 \beta_1+\beta_2 +{2q} + 2 A (\alpha+1)^{1/2} & =& 0 \\
 \beta_1+2 A &=& 0
\end{array}
\right. ,
$$
and the coefficient of $u_{-1}^2$ becomes
$$
  \beta_1 - \beta_2 +{2q} + 2 A \frac{1-\alpha}{(\alpha+1)^{1/2}}
  = 4A\left( \frac{1}{(\alpha+1)^{1/2}} -1\right) + 4q.
$$
We define $q_2$ by
\[
q_2 := A\left( 1- \frac{1}{(\alpha+1)^{1/2}} \right) .
\]
Now,
 \begin{align}
2W
&  = \left( (\alpha+1)^{1/2} u_1^2 + u_0^2 + \frac{1-\alpha}{(1+\alpha)^{1/2}} u_{-1}^2 - A\right)^2 \nonumber \\
& \qquad + 2u_0^2 \left(  \left( (\alpha+1) - (\alpha+1)^{1/2} \right)^{1/2} u_1  -\frac{\alpha}{ \left( (\alpha+1) - (\alpha+1)^{1/2} \right)^{1/2} } u_{-1} \right)^2  \nonumber \\
& \qquad +  \frac{  2\alpha +4 -2(\alpha+1)^{1/2} }{(\alpha+1) - (\alpha+1)^{1/2} }\,  u_0^2 u_{-1}^2   + \frac{4\alpha}{\alpha+1}\, u_{-1}^4
      + 4\left(q-q_2 \right)   u_{-1}^2. \label{eq:W-NS+MS}
\end{align}
When $q> q_2$, the coefficient of $u_{-1}$ is positive. Every term in the function $W$ is non-negative and the only zero of the function $W(u_1,u_0,u_{-1})$ in $\mathbb R^3_+$ is
$$ \mathbf a= \left(\sqrt{ \frac{A}{(\alpha+1)^{1/2}} },0,0\right) \qquad\textrm{and}\qquad \mathbf b=(0, \sqrt{A},0).$$

\I If $\mf{u}(x) = \mf{a}$ or $\mf{b}$ for $x$ a.e. in $\Omega$, then $\int_\Omega W(\mf{u}(x))\, dx = 0$.
On the other hand, since $W\ge0$, we get that if $\int_\Omega W(\mf{u}(x)) \, dx = 0$, then $W(\mf{u}(x)) = 0$ for $x$ a.e. in $\Omega$ and thus $\mf{u}(x) = \mf{a}$ or $\mf{b}$ a.e.~in $\Omega$. Therefore,
a minimizer of the  variational problem $\inf \int_\Omega W(\mathbf u)\,dx $  can be expressed as
\be \label{utwo}
 \mathbf u = \mathbf a\, \chi_U + \mathbf b\, \chi_{\Omega\backslash U}
 \ee
for some measurable set $U \subset \Omega$.
\I
Finally, we determine the constant $A$ and the measure $|U|$  by plugging (\ref{utwo}) into the two constraints (\ref{constraintN}), (\ref{constraintM}):
\begin{align*}
\frac{A}{(\alpha+1)^{1/2}} |U| + A (|\Omega|-|U|) &=N \\
\frac{A}{(\alpha+1)^{1/2}} |U| &= M.
\end{align*}
These lead to
\begin{align*}
A &= \frac{1}{|\Omega|} \left(N+(\alpha +1)^{1/2}M-M\right) \\
&=  \left(n+((\alpha +1)^{1/2}-1)m\right)\\
|U|&= \frac{(\alpha+1)^{1/2}m}{n+ ((\alpha+1)^{1/2}-1)m} |\Omega|.
\end{align*}
\en
\end{proof}

\subsection{Antiferromagnetic BEC $(\alpha>0)$: $q<q_1$ implies $2C$ state}\label{case:2C}
\begin{thm} \label{thm4.2}
Suppose $0<\alpha\le 1$ and $m\ge 0$. Let
\be \label{eq:q1}
q_1 =  \left(-n + \sqrt{n^2+\alpha m^2}\right).
\ee
Then for $q< q_1$ the global minimizer of the constrained variational problem ($P_{0}$)
 is the constant state
\be \label{u2C}
 \mathbf u =  \left(\sqrt{\frac{n+m}{2}},0,\sqrt{\frac{n-m}{2}}\right).
 \ee
\end{thm}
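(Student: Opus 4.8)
The plan is to follow the template of the preceding $NS+MS$ theorem: introduce a modified energy density $W$ differing from $H_{TF}$ only by multiples of the two constraints plus a constant, arrange it to be pointwise non-negative, and identify the candidate $2C$ state \eqref{u2C} as its unique zero. Concretely, I set
\[
W(\mathbf u) := H_{TF}(\mathbf u) + \frac{\beta_1}{2}(u_1^2+u_0^2+u_{-1}^2) + \frac{\beta_2}{2}(u_1^2-u_{-1}^2) + \frac{A^2}{2},
\]
so that $\int_\Omega W\,dx$ and $\int_\Omega H_{TF}\,dx$ have the same minimizers under \eqref{constraintN}--\eqref{constraintM}. Using $\mathrm{sgn}(\alpha)=1$ and completing squares in the mass and magnetization variables, I would take $\beta_1=-2(n+q)$, $\beta_2=-2\alpha m$, and $A^2=n^2+\alpha m^2$, the values making the target state a zero of $W$; this yields
\[
2W = \bigl(u_1^2+u_0^2+u_{-1}^2-n\bigr)^2 + \alpha\bigl(u_1^2-u_{-1}^2-m\bigr)^2 + 2u_0^2\bigl[\alpha(u_1-u_{-1})^2-q\bigr].
\]

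Next I would read this as a quadratic in the single variable $u_0^2\ge0$. Extracting the $u_0^2$-dependence gives
\[
2W = u_0^4 + 2u_0^2\,B + C,
\]
where, with $P=u_1^2+u_{-1}^2$ and $D=u_1^2-u_{-1}^2$, the quantities $B=P+\alpha(u_1-u_{-1})^2-(n+q)$ and $C=(P-n)^2+\alpha(D-m)^2$ depend only on $(u_1,u_{-1})$. Minimizing the upward parabola over $u_0^2\ge0$ shows that $2W\ge0$ for every admissible $u_0$ precisely when $B+\sqrt{C}\ge0$ (if $B\ge0$ the minimum is $C\ge0$ at $u_0=0$; if $B<0$ it is $C-B^2$ at $u_0^2=-B$). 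Since $q$ enters only through $B$, this is the scalar condition $q\le\Phi_0(u_1,u_{-1})$, where $\Phi_0:=P+\alpha(u_1-u_{-1})^2-n+\sqrt{C}$ is $q$-independent. Thus the theorem collapses to the claim $\min_{u_1,u_{-1}\ge0}\Phi_0=q_1$, attained at $(u_1,u_{-1})=(0,0)$ (the $NS$ configuration), where one checks directly $\Phi_0(0,0)=-n+\sqrt{n^2+\alpha m^2}=q_1$ by \eqref{eq:q1}.

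The main obstacle is the global lower bound $\Phi_0\ge q_1$, which looks like a non-convex two-variable minimization but I expect to dispatch with a triangle-inequality trick. Writing $\|(x,y)\|_\alpha:=\sqrt{x^2+\alpha y^2}$ (a genuine norm since $0<\alpha$), the last term of $\Phi_0$ is $\|(P,D)-(n,m)\|_\alpha$ and $q_1+n=\|(n,m)\|_\alpha$; the reverse triangle inequality gives $\|(P,D)-(n,m)\|_\alpha\ge\|(n,m)\|_\alpha-\|(P,D)\|_\alpha$, reducing $\Phi_0\ge q_1$ to the pointwise bound
\[
P+\alpha(u_1-u_{-1})^2 \ \ge\ \|(P,D)\|_\alpha=\sqrt{P^2+\alpha D^2}.
\]
Using $(u_1-u_{-1})^2=P-\sqrt{P^2-D^2}$ this reads $(1+\alpha)P-\alpha\sqrt{P^2-D^2}\ge\sqrt{P^2+\alpha D^2}$; the left side is manifestly non-negative, so it suffices to compare squares, and the squared difference equals $\alpha(1+\alpha)\bigl(P-\sqrt{P^2-D^2}\bigr)^2=\alpha(1+\alpha)(u_1-u_{-1})^4\ge0$. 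This single computation simultaneously evaluates $\Phi_0$ at the $NS$ point and closes the global estimate (it is also the $0<\alpha\le1$, $|D|\le P$ specialization of the elementary inequality underlying $q_1$).

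Finally, for strict $q<q_1$ one has $B+\sqrt{C}=\Phi_0-q\ge q_1-q>0$ everywhere, so $2W=0$ forces $u_0=0$ together with $C=0$; the latter gives $P=n$, $D=m$, i.e.\ exactly the state \eqref{u2C}. Hence $W\ge0$ with this constant state as its unique zero, so $\int_\Omega W\,dx\ge0$ with equality iff $\mathbf u$ equals \eqref{u2C} a.e.; since that state satisfies both constraints ($n=N/|\Omega|$, $m=M/|\Omega|$), it is the global minimizer. I would also record the standing requirement $0\le m\le n$ needed for $u_{-1}^2=(n-m)/2\ge0$.
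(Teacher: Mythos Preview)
Your proof is correct and takes a genuinely different route from the paper's. The paper completes squares with auxiliary parameters: it writes
\[
2W=(u_1^2+k_1u_0^2+u_{-1}^2-A)^2+\alpha(u_1^2+k_2u_0^2-u_{-1}^2-B)^2+(\text{remainder}),
\]
then \emph{optimizes} over $(k_1,k_2)$ subject to $k_1^2+\alpha k_2^2=1$ so as to maximize the linear $u_0^2$-coefficient while killing the leftover $u_0^4$. The optimal $k_1=A/\sqrt{A^2+\alpha B^2}$, $k_2=B/\sqrt{A^2+\alpha B^2}$ produce an explicit sum of non-negative terms plus $2(q_1-q)u_0^2$, and a separate discriminant computation shows the residual quadratic form $\alpha(u_1-u_{-1})^2+(1-k_1)(u_1^2+u_{-1}^2)-\alpha k_2(u_1^2-u_{-1}^2)$ is non-negative.

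You instead minimize the quadratic in $u_0^2$ directly, reducing everything to the single scalar inequality $\Phi_0(u_1,u_{-1})\ge q_1$, which you then prove via the reverse triangle inequality for $\|\cdot\|_\alpha$ together with the neat identity showing the squared defect equals $\alpha(1+\alpha)(u_1-u_{-1})^4$. This is more streamlined for the present theorem and makes the meaning of $q_1$ transparent (it is exactly $\min\Phi_0$, attained at the nematic point $u_1=u_{-1}=0$). The paper's decomposition, on the other hand, pays off in the next section: the very same sum-of-squares form, now with the $u_0^2$-coefficient set to zero, immediately exhibits the two wells $\mathbf a,\mathbf b$ in the $NS+2C$ regime $q_1<q<q_2$, whereas your quadratic-in-$u_0^2$ viewpoint would need reworking there since $2W$ then vanishes at an interior $u_0^2>0$.
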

\begin{proof}
\bn
\I
Under the constraints of the total mass and total magnetization, the variational problem
$\inf \int_\Omega H_{TF}(\mf{u}(x))\,dx$ is equivalent to the variational problem $\inf \int_\Omega W(\mf{u}(x))\, dx$ under the same constraints, where
\begin{align}
W(u_1,u_0,u_{-1}) &:= H_{TF}(u_1,u_0,u_{-1}) + \frac{A^2}{2} + \alpha \frac{B^2}{2}  \nonumber\\
& \qquad - (A+{q})(u_1^2+u_0^2+u_{-1}^2) -\alpha B (u_1^2-u_{-1}^2). \label{W:2C}
\end{align}
The  goal is to show that  $W(\mf{u}) \ge 0$ for $0\le q < q_1$, and the only zero of $W$ satisfies (\ref{u2C}).  The constants $A$ and $B$ will be determined by the two constraints.
\I The strategy is to introduce two parameters $k_1$ and $k_2$ to make the coefficient of $u_0^4$ to be zero, to maximize the coefficient of $u_0^2$, and to make the rest to be non-negative:
\begin{align*}
2W &=
(u_1^2 + u_0^2+u_{-1}^2 - A )^2 + \alpha (u_1^2 - u_{-1}^2-B)^2 + 2\alpha u_0^2(u_1-u_{-1} )^2 - {2q}u_0^2\\
&=  (u_1^2 + k_1u_0^2  +u_{-1}^2 - A )^2  + \alpha (u_1^2 +k_2u_0^2 - u_{-1}^2-B)^2 \\
& \quad + (1-k_1^2-\alpha k_2^2) u_0^4 + 2 \left( (k_1-1)A+\alpha k_2 B - {q} \right)u_0^2\\
& \quad + 2 u_0^2 \left( \alpha (u_1-u_{-1})^2 + (1-k_1)(u_1^2+u_{-1}^2) - \alpha k_2 ( u_1^2 - u_{-1}^2)\right).
\end{align*}
By requiring
 \[
 \max_{k_1,k_2} \left( (k_1-1)A+\alpha k_2 B \right)  \mbox{ subject to } 1-k_1^2-\alpha k_2^2=0,
 \]
 we get
$$ k_1=\frac{A}{\sqrt{A^2+\alpha B^2}},\qquad k_2 = \frac{B}{\sqrt{A^2 + \alpha B^2}},$$
and
\[
 (k_1-1)A+\alpha k_2 B  =  -A + \sqrt{A^2+\alpha B^2}.
 \]
Now $2W$ becomes
\begin{align}
2W
&=  (u_1^2 + k_1u_0^2  +u_{-1}^2 - A )^2  + \alpha (u_1^2 +k_2u_0^2 - u_{-1}^2-B)^2 \nonumber \\
& \qquad + 2\left(-A + \sqrt{A^2+\alpha B^2} -{q}\right)u_0^2  \nonumber \\
& \qquad + 2 u_0^2 \left( \alpha (u_1-u_{-1})^2 + (1-k_1)(u_1^2+u_{-1}^2) - \alpha k_2 ( u_1^2 - u_{-1}^2)\right).\label{4.2-2W}
\end{align}
\I
For the last term, we claim that
$$ {{\alpha}}(u_1-u_{-1})^2 + (1-k_1)(u_1^2+u_{-1}^2) - \alpha k_2 ( u_1^2 - u_{-1}^2) \ge 0 \mbox{ for all }u_1,u_{-1}\in\mathbb R.
$$
This quadratic form of $u_1,u_{-1}$ can be re-expressed as
$$ (\alpha+1 - k_1 - \alpha k_2) u_1^2 + (\alpha+1-k_1+\alpha k_2)u_{-1}^2 -2\alpha u_1 u_{-1}.$$
By using $0 < \alpha \le 1$, we find that
\[
 \alpha+1 - k_1 - \alpha k_2 =
 \left(1-\frac{A}{\sqrt{A^2 + \alpha B^2}} \right) + \alpha \left( 1-\frac{B}{\sqrt{A^2+\alpha B^2}}\right) > 0,
\]
$$ \alpha+1 - k_1 + \alpha k_2>0,$$
and the discriminant of the quadratic form
\begin{eqnarray*}
&& (\alpha+1 - k_1 - \alpha k_2) (\alpha+1 - k_1 + \alpha k_2) -\alpha^2 \\
&=& \frac{(1+\alpha) \alpha^2 B^4 }{ (A^2+\alpha B^2) \left[ 2 A^2 + \alpha B^2 + 2A\sqrt{A^2+\alpha B^2} \right]} > 0
\end{eqnarray*}
This proves the claim.
\I For the $u_0^2$ term,
we define the constant $q_1$ by
$$ q_1:=  \left( -A + \sqrt{A^2+\alpha B^2} \right),$$
then the $u_0^2$ term becomes
\[
{2}\left(q_1-q\right) u_0^2.
\]
When $q < q_1$, this term is positive.
\I We have seen that for $q < q_1$, all the terms of $W$ are nonnegative and $W=0$ only when $\mf{u}=\left(\sqrt{\frac{A+B}{2}},0,\sqrt{\frac{A-B}{2}}\right)$.
\I By plugging this constant state into the two constraints (\ref{constraintN}), (\ref{constraintM}), we get
\[
A = n, \ \ B = m.
\]
\en
\end{proof}

\subsection{Antiferromagnetic BEC $(\alpha>0)$:  $q_1<q<q_2$ implies $NS+2C$ state}\label{case:NS-2C}

\begin{thm}
 Suppose  $0<\alpha\le 1$ and $q_1<q<q_2$. \\
 If $m > 0$, then the global minimizer of the variational problem $(P_{0})$ takes the form
 \begin{equation}\label{eq:3.11}
  \mathbf u = \mathbf a \,\chi_U + \mathbf b\,\chi_{\Omega\backslash U}
  \end{equation}
 where
 \begin{equation}\label{eq:3.12}
  \mathbf a =(\sqrt{\frac{A+B}{2}},0,\sqrt{\frac{A-B}{2}}) \quad\textrm{and}\quad \mathbf b=  (0,\sqrt[4]{A^2+\alpha B^2},0),
  \end{equation}
 $$ A=n + (r-1)\tq \quad\textrm{and}\quad B=\frac{m}{r},$$
where  $r:= |U|/|\Omega|$ satisfies
 \be 2q^2 r^3 + \left( 2qn - q^2 \right) r^2 -\alpha m^2 =0. \label{eq:vol2}\ee
 If $m=0$, then the global minimizer of the variational problem $(P_{0})$ has the form
 \begin{equation}\label{eq:3.13}
  \mathbf u = \mathbf b\,\chi_{\Omega}\qquad \textrm{where }\quad\mathbf b=  (0,\sqrt{n},0).
 \end{equation}
\end{thm}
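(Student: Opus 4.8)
\section*{Proof proposal}

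The plan is to reuse verbatim the modified energy density from the proof of Theorem~\ref{thm4.2}, namely
\[
W(\mathbf u) := H_{TF}(\mathbf u) + \frac{A^2}{2} + \alpha\frac{B^2}{2} - (A+q)\,|\mathbf u|^2 - \alpha B\,(u_1^2 - u_{-1}^2),
\]
exactly as in \eqref{W:2C}. Since $W$ differs from $H_{TF}$ only by terms affine in the two conserved densities $|\mathbf u|^2$ and $u_1^2-u_{-1}^2$ plus a constant, on the constraint set $\mathcal N=N,\ \mathcal M=M$ the integrals $\int_\Omega W\,dx$ and $\int_\Omega H_{TF}\,dx$ differ by a fixed constant, so the two minimization problems are equivalent. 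The conceptual difference from Theorem~\ref{thm4.2} is that here $q>q_1$, so the constants $A,B$ can no longer be the global densities $n,m$; instead I would pin them down by the coexistence (common-tangent) requirement that \emph{both} the two-component state $\mathbf a$ and the nematic state $\mathbf b$ be zeros of the same $W$.

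The key observation is that the $2C$ state $\mathbf a=(\sqrt{(A+B)/2},0,\sqrt{(A-B)/2})$ is automatically a zero of $W$ for any admissible $A,B$, while demanding that the nematic state $\mathbf b=(0,b_0,0)$ also be a zero forces $(A+q)^2 = A^2+\alpha B^2$, i.e. $q = -A+\sqrt{A^2+\alpha B^2}$, with $b_0^2 = A+q = \sqrt{A^2+\alpha B^2}$. This is \emph{precisely} the local threshold playing the role of $q_1$ in the decomposition \eqref{4.2-2W}. At this value the linear $u_0^2$ coefficient $2(q_1-q)u_0^2$ vanishes identically, leaving
\[
2W = \big(u_1^2 + k_1 u_0^2 + u_{-1}^2 - A\big)^2 + \alpha\big(u_1^2 + k_2 u_0^2 - u_{-1}^2 - B\big)^2 + 2u_0^2\,Q(u_1,u_{-1}),
\]
with $k_1 = A/\sqrt{A^2+\alpha B^2}$, $k_2 = B/\sqrt{A^2+\alpha B^2}$, and $Q$ the quadratic form already shown to be positive definite in the proof of Theorem~\ref{thm4.2}. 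Hence $W\ge 0$, and $W=0$ forces all three terms to vanish: either $u_0=0$, which together with the two squares gives the $2C$ state $\mathbf a$, or $Q(u_1,u_{-1})=0$, which by positive-definiteness gives $u_1=u_{-1}=0$ and hence the nematic state $\mathbf b$ with $b_0^2=\sqrt{A^2+\alpha B^2}$. Thus the zero set is exactly $\{\mathbf a,\mathbf b\}$ in $\mathbb R^3_+$, and by the usual argument every constrained minimizer is a.e. a two-phase mixture $\mathbf a\chi_U+\mathbf b\chi_{\Omega\setminus U}$.

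It then remains to fix $A,B,r$ from the two constraints. The magnetization constraint gives $rB=m$, i.e. $B=m/r$; the mass constraint, using $|\mathbf a|^2=A$ and $|\mathbf b|^2=A+q$, gives $A+(1-r)q=n$, i.e. $A=n+(r-1)q$. Substituting into the coexistence relation $\alpha B^2 = q(2A+q)$ produces the cubic $2q^2 r^3 + (2qn-q^2)r^2 - \alpha m^2 = 0$ of \eqref{eq:vol2}. The main obstacle I anticipate is purely the analysis of this cubic: one must show it admits a root $r\in(0,1)$ with $0<B\le A$ (so that $\mathbf a$ is a genuine $2C$ state with nonnegative entries) throughout the range $q_1<q<q_2$. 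Writing $g$ for the cubic, $g(0)=-\alpha m^2<0$ and $g(1)=q^2+2qn-\alpha m^2$, and since $q_1$ is exactly the positive root of $x^2+2nx-\alpha m^2$, one has $g(1)>0$ iff $q>q_1$; the intermediate value theorem then furnishes a root in $(0,1)$, while the upper bound $q<q_2$ is what keeps the minority phase in the two-component regime $A\ge B$ rather than collapsing to a magnetic state, and verifying these sign and ordering conditions (and the admissible root's uniqueness) is the delicate bookkeeping. Finally, the degenerate case $m=0$ I would treat separately by dropping the magnetization multiplier and using $W=H_{TF}+\tfrac{A^2}{2}-A|\mathbf u|^2$, for which
\[
2W = (|\mathbf u|^2-A)^2 + \alpha(u_1^2-u_{-1}^2)^2 + 2\alpha u_0^2(u_1-u_{-1})^2 + 2q(u_1^2+u_{-1}^2)
\]
is manifestly nonnegative for $q>0=q_1$, with unique zero $u_1=u_{-1}=0$, $u_0^2=A$; the mass constraint forces $A=n$, yielding the pure nematic state $\mathbf b=(0,\sqrt n,0)$.
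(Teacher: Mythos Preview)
Your proposal is correct and follows the paper's approach essentially line for line: the same modified density $W$ from \eqref{W:2C}, the same coexistence condition $q=-A+\sqrt{A^2+\alpha B^2}$ killing the linear $u_0^2$ term, the same nonnegativity via the quadratic form already handled in Theorem~\ref{thm4.2}, and the same constraint algebra producing the cubic \eqref{eq:vol2}. The only substantive difference is in the analysis of that cubic: where you propose a direct IVT argument on $g(r)$ (noting $g(0)<0$ and $g(1)>0\Leftrightarrow q>q_1$) and correctly flag the admissibility check $B\le A$ and uniqueness as the remaining bookkeeping, the paper instead inverts the cubic to write $\bar q=q/n$ as a two-branched function $Q_1(r),Q_2(r)$, discards $Q_2$, and uses the monotonicity of $Q_1$ together with the increasing curve $h(r)$ encoding $B\le A$ to obtain existence, uniqueness, and admissibility simultaneously on an interval $(r_2,1)$ with $Q_1(1)=\bar q_1$, $Q_1(r_2)=\bar q_2$; your $m=0$ treatment is in fact cleaner than the paper's.
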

\begin{proof}
\bn
\I Following the first two steps in Section \ref{case:2C}, we now choose $A$ and $B$ to cancel the $u_0^2$ term in (\ref{4.2-2W}), i.e. we require
\be \label{4.3-u_0^2}
-A+\sqrt{A^2+\alpha B^2}={q}.
\ee
Then
\begin{align}
2W(u_1,u_0,u_{-1}) &=
(u_1^2 +k_1 u_0^2+u_{-1}^2-A)^2 + \alpha (u_1^2 + k_2 u_0^2 - u_{-1}^2-B)^2   \label{W:NS-2C}\\
&\quad + 2 u_0^2 \left[ \alpha (u_1-u_{-1})^2  +(1-k_1) (u_1^2+u_{-1}^2)  -\alpha k_2 (u_1^2 - u_{-1}^2)\right].\nonumber
\end{align}
We have seen in Step 3 of Section \ref{case:2C} that the quadratic part in the last term is non-negative and it is zero only when $u_1=u_{-1}=0$.  Thus, from (\ref{W:NS-2C}), $W \ge 0$ and there are two roots for $W=0$:
\begin{enumerate}
 \item   $u_0=0$ and from (\ref{W:NS-2C}) $u_1,u_{-1}$ satisfy the system
 \be \label{eq:2C-AB}
         \left\{
         \begin{array}{rcl}
          u_1^2 + u_{-1}^2 &=& A, \\
          u_1^2 - u_{-1}^2 &=& B.
         \end{array}
         \right.
\ee
         This is a  2C state.  It takes the form
         $$ \mathbf u = \left(\sqrt{ \frac{1}{2}(A+B)},\,0,\sqrt{\frac{1}{2}(A-B)}\right)\equiv \mathbf a.$$

 \item $ u_1 = u_{-1} = 0$, but $u_0 \ne 0$.  From (\ref{W:NS-2C}),
         $$
          u_0^2 = \sqrt{A^2+\alpha B^2}.
         $$
         This is a NS state.  It takes the form
         $$  \mathbf u = (0,\sqrt[4]{A^2+\alpha B^2},0)\equiv \mathbf b.$$
\end{enumerate}
In this case, the minimizers of  the variational problem
$$ \inf \int_\Omega W(u_1,u_0,u_{-1})\,dx$$
 take the form
$$ \mathbf u = \mathbf a \chi_U + \mathbf b \chi_{\Omega\backslash U},$$
where $U$ is any measurable set in $\Omega$ with relative size $r:= |U|/ |\Omega|$.
\I Our remaining task is to show the existence of $A$, $B$ and $r$ for $q_1 < q < q_2$ from the condition (\ref{4.3-u_0^2}) and the two constraints (\ref{constraintN}) (\ref{constraintM}) and some natural inequality  constraints. We list them below.
\begin{align}
 -A+ \sqrt{A^2+\alpha B^2}  &=  q   \label{eq:4.3-1}\\
 r A + (1-r) \sqrt{A^2+\alpha B^2} &= n \label{eq:4.3-2}\\
 r B &= m \label{eq:4.3-3}\\
 0\le  r \le 1, \  0\le B \le A, \quad & q_1 \le q \le q_2 . \label{eq:4.3-4}
\end{align}
The inequality $0\le B \le A$ is due to ({eq:2C-AB}).

The first two equations give
$$ A=n + (r-1)\tq.$$
The third equation leads to
$$B=\frac{m}{r}.$$
Substituting these two into the first equation, we obtain
$$ 2q^2 r^3 + \left( 2\tq n -q^2 \right) r^2 -\alpha m^2 =0. $$
We may rewrite it as the following dimensionless form
\be \label{eq:xeq}
2\bq^2 r^3 + \left( 2\bq  -\bq^2 \right) r^2 -\eta =0,
\ee
where $\bq := q/n$, $\eta = \alpha m^2/n^2$, $0\le \eta \le 1$.

Let us  express the condition $B\le A$  in terms of $r$, $\bq$ variables:
\[
 \frac{m}{r}=B \le A= n+ {r q}- q \quad\Leftrightarrow\quad \bq r^2 + (1-\bq)r -\frac{m}{n} \ge 0
\quad \Leftrightarrow\quad
\bq \le h(r):= \frac{1-\frac{m}{n}\frac{1}{r}}{1-r},
\]
for $0\le r \le 1$.
So our goal is to solve (\ref{eq:xeq}) for $x \in [0,1]$ for given $\bq\in [\bq_1 , \bq_2]$ and satisfying $\bq \le h(r)$.   Here, $\bq_i := q_i/n$, $i=1,2$.
\I
We  rewrite (\ref{eq:xeq}) as a quadratic equation for $\bq$:
\be
(2r^3-r^2) \bq^2 + 2r^2 \bq -\eta=0.
\ee
There are two branches of solutions for $\bq$:
\begin{align}
Q_1(r) &= \frac{-r^2 + \sqrt{r^4+\eta (2r^3-r^2)}}{2r^3-r^2}, \label{eq:Q1}\\
Q_2(r) &= \frac{-r^2 - \sqrt{r^4+\eta (2r^3-r^2)}}{2r^3-r^2}.  \nonumber
\end{align}
Since the turning point $r_0$ satisfying the equation
$r^2+\eta (2r-1) = 0$,  we find $r_0 = -\eta + \sqrt{\eta^2+\eta}$.  Furthermore, we have $0\le r_0 \le \sqrt{2}-1 < 1/2$ because $0\le \eta \le 1$.

By direct calculation, we get that $Q_1(r)> 0$ and decreasing on  $r > r_0$, and  $Q_2(r) > 0$ and increasing on $r_0 < r < 1/2$.  Furthermore, $Q_2(r)\to \infty$ as $r\to 1/2-$.  We plot the solution curve with $n=1,m=0.2,\alpha=0.8$ and the corresponding $\eta = 0.0032$ in Figure \ref{fig:Q1Q2h} .
We also notice that $h'(r) > 0$ for $0< r < 1$ and $h(1-) = \infty$.  Here, we have used  $m/n \le 1$.

\begin{figure}[ht]
\begin{center}
\includegraphics[scale=0.8]{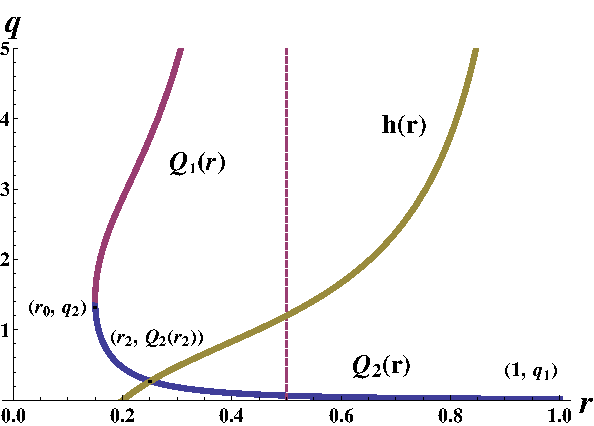}
\caption{$n=1,m=0.2,\alpha=0.8,\eta=0.0032$}
\label{fig:Q1Q2h}
\end{center}
\end{figure}

\I
We claim that there is no solution for  $Q_2(r) \le h(r)$ with $r_0 \le r < 1/2$.  We calculate
\begin{align*}
Q_2(r) - h(r) &= \frac{1 + \sqrt{1+ \left(\frac{2}{r}-\frac{1}{r^2}\right)\alpha\frac{m^2}{n^2}}}{1-2r}-\frac{r-\frac{m}{n}}{r-r^2}\\
&= \frac{r+\left(\frac{1}{r}-1\right)\frac{m}{n} + (1-r)\sqrt{1+ \left(\frac{2}{r}-\frac{1}{r^2}\right)\alpha\frac{m^2}{n^2}}}{(1-2r)(1-r)} > 0,
\end{align*}
for $r_0 \le r < 1/2$.  Thus, there is no admissible solution on the branch $\bq=Q_2(r)$.
\I Given $\bq \in [\bq_1,\bq_2]$, we look for  $r\in [r_0,1]$ such that $\bq=Q_1(r)$ and $\bq \le h(r)$.
We first notice that $\bq_1 = Q_1(1)$ from (\ref{eq:Q1}) and (\ref{eq:q1}).
On $(r_0,1)$, the branch $\bq = Q_1(r)$  is strictly decreasing, and the function $\bq=h(r)$ is strictly increasing.  Thus,   there exists  a unique $r_2\in (r_0, 1)$ such that $Q_1(r_2)=h(r_2)$, because  $Q_1(r_0) =  Q_2(r_0) > h(r_0)$ and   $Q_1(1)-h(1-) = -\infty$.
Indeed, at $r_2$,  we have $A=B$.   Then from (\ref{eq:4.3-1}), (\ref{eq:4.3-2}),(\ref{eq:4.3-3}), we get
\begin{align*}
Q_1(r_2) &= \frac{\sqrt{1+\alpha}-1}{\sqrt{1+\alpha}} \left(1+(\sqrt{1+\alpha}-1)\frac{m}{n}\right) = \bq_2 = h(r_2),\\
r_2 &= \frac{\sqrt{1+\alpha}}{\sqrt{1+\alpha} -1 +\frac{n}{m}}
\end{align*}
For $r \in (r_2,1)$, we have $Q_1(r) < h(r)$ because $Q_1(\cdot)$ is strictly decreasing and $h(\cdot)$ is strictly increasing.
Now, we know $Q_1(1)=\bq_1$ and $Q_1(r_2) = \bq_2$ and $Q_1(\cdot)$ is strictly decreasing
in $(r_2)$, we get for every $\bq \in (\bq_1,\bq_2)$, there exists a unique $r\in (r_2,1)$ such that $\bq=Q_1(r)$ and $Q_1(r) <h(r)$.
\I Lastly, we discuss the case of $m=0$. We proceed the Step 1 and find that the  global minimizer still has the form \eqref{eq:3.11} and \eqref{eq:3.12}. We obtain $x, A$ and $B$ by solving  \eqref{eq:4.3-1}, \eqref{eq:4.3-2} and \eqref{eq:4.3-3} and get
$$ r=0,\qquad A = n-q\qquad\textrm{and}\qquad B=\sqrt{\frac{(2n-q)q}{\alpha}}.$$
This gives \eqref{eq:3.13}.
\en
\end{proof}

\begin{remark}
For antiferromagnetic BEC $(\alpha>0)$, the 2C components $u_1$, $u_{-1}$ are suppressed by the value $q$. Consider the case that the total magnetization $m>0$. In general, the larger value $q$ is , the smaller values $u_1$ and $u_{-1}$ are. On the other hand,  as $q$ increases larger than $q_1$ the component $u_0$ begins to increase. When $q$ is larger than $q_2$, the component $u_{-1}$ vanishes and the ground state is $NS+MS$.
\end{remark}

\subsection{Ferromagnetic BEC $(\alpha<0)$: $q<0$ implies $MS+MS$ state}\label{case:MS-MS}
\begin{thm}
  Suppose  $-1<\alpha<0$, $m\ge 0$ and $q<0$.  Then the global minimizers of the constrained variational problem $(P_{0})$ take the form
 $$ \mathbf u = \mathbf a \,\chi_U + \mathbf b\,\chi_{\Omega\backslash U},$$
 where $U$ is a measurable set of size
 \be |U|=\frac{1}{2}\left( 1+ \frac{m}{n}\right)|\Omega| \label{eq:vol3}\ee
  and
 $$ \mathbf a = (\sqrt{n},0,0)\quad\textrm{and} \quad\mathbf b=(0,0,\sqrt{n}).$$
\end{thm}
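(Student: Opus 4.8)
The plan is to follow the template of Section~\ref{case:NS-MS}: rewrite $2H_{TF}$ as a sum of squares adapted to the ferromagnetic sign $\mathrm{sgn}(\alpha)=-1$, convert $(P_0)$ into the minimization of a modified density $W\ge 0$ using the two constraints, identify the zeros of $W$, and finally read off $|U|$ from the constraints. The crucial algebraic step is the identity
\be
2H_{TF}(\mf u) = (1+\alpha)\,\rho^2 \;-\; \alpha\,(u_0^2-2u_1u_{-1})^2 \;+\; 2q\,(u_1^2+u_{-1}^2),
\qquad \rho:=u_1^2+u_0^2+u_{-1}^2,
\ee
which I expect to verify by expanding $(u_1^2+u_0^2+u_{-1}^2)^2+\alpha(u_1^2-u_{-1}^2)^2+2\alpha u_0^2(u_1+u_{-1})^2$ and observing that the $\alpha$-dependent remainder collapses into the single perfect square $-\alpha(u_0^2-2u_1u_{-1})^2$. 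The point of this form is that for $-1<\alpha<0$ both $1+\alpha>0$ and $-\alpha>0$, so the first two terms are manifestly non-negative, while the sign of the $q$-term is handled in the next step.

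Next I would introduce $W(\mf u):=H_{TF}(\mf u)+\tfrac{\beta_1}{2}(u_1^2+u_0^2+u_{-1}^2)+\tfrac{\beta_2}{2}(u_1^2-u_{-1}^2)+\tfrac{C}{2}$, so that on the constraint set $\{\ml N=N,\ \ml M=M\}$ minimizing $\ml E_{TF}$ is equivalent to minimizing $\int_\Omega W$. Since the two target states $\mf a=(\sqrt n,0,0)$ and $\mf b=(0,0,\sqrt n)$ have equal energy density (and by the $u_1\leftrightarrow u_{-1}$ symmetry noted after \eqref{eq:3.3}), I take $\beta_2=0$. Writing $2q(u_1^2+u_{-1}^2)=2q\rho-2q u_0^2$ and completing the square in $\rho$ by the choices $\beta_1=-2(1+\alpha)n-2q$ and $C=(1+\alpha)n^2$, the identity becomes
\be
2W(\mf u) = (1+\alpha)(\rho-n)^2 \;+\; (-2q)\,u_0^2 \;+\; (-\alpha)\,(u_0^2-2u_1u_{-1})^2 .
\ee
Because $q<0$ and $\alpha<0$, all three terms are non-negative, so $W\ge 0$.

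I would then locate the zeros of $W$: equality $W=0$ forces simultaneously $\rho=n$, $u_0=0$, and $u_0^2=2u_1u_{-1}$; the last two give $u_1u_{-1}=0$, so exactly one of $u_1,u_{-1}$ survives, leaving precisely $\mf a=(\sqrt n,0,0)$ and $\mf b=(0,0,\sqrt n)$ as the only zeros in $\R^3_+$. Consequently $\int_\Omega W\ge 0$ with equality iff $W=0$ a.e., i.e. iff $\mf u(x)\in\{\mf a,\mf b\}$ a.e.; this yields the two-phase form $\mf u=\mf a\,\chi_U+\mf b\,\chi_{\Omega\setminus U}$ for some measurable $U$, exactly as in Steps~4--5 of Section~\ref{case:NS-MS}. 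Finally the constraints fix $|U|$: since $\rho\equiv n$ on both states, $\ml N=n|\Omega|=N$ holds automatically, while $\ml M=\int_\Omega(u_1^2-u_{-1}^2)=n(2|U|-|\Omega|)=M$ gives $|U|=\tfrac12(1+m/n)|\Omega|$, which lies in $[0,|\Omega|]$ because $0\le m\le n$ (magnetization is bounded by mass).

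There is no serious analytic obstacle here; the work is concentrated in spotting the perfect-square completion, and the real content is the role of the hypotheses. The strict inequality $q<0$ is exactly what makes the $u_0^2$ coefficient positive and hence forces $u_0=0$ (ruling out a $3C$ branch), and $-1<\alpha<0$ is exactly what keeps both $1+\alpha$ and $-\alpha$ strictly positive; if either degenerated ($q=0$ or $\alpha=-1$) the zero set of $W$ would cease to be the two isolated points $\mf a,\mf b$ and the conclusion would fail. The only care point in writing this up is confirming that the decomposition has no other zero locus, which the three independent conditions above settle cleanly.
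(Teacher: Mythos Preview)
Your proof is correct and follows essentially the same route as the paper: the same perfect-square identity $2H_{TF}=(1+\alpha)\rho^2-\alpha(u_0^2-2u_1u_{-1})^2+2q(u_1^2+u_{-1}^2)$, the same completion of the square in $\rho$ after absorbing the constraints, and the same identification of the zero set $\{\mf a,\mf b\}$ followed by reading off $|U|$ from the magnetization. The only cosmetic difference is that you plug in the target value $n$ from the outset whereas the paper carries an unknown $A$ and solves $A=n$ at the end.
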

\begin{proof}
\bn
\I First, we notice that
\begin{align*}
2H_{TF} &=  (u_1^2 + u_0^2+u_{-1}^2)^2 + \alpha (u_1^2 - u_{-1}^2)^2 + 2\alpha u_0^2(u_1 + u_{-1})^2+ {2q}(u_1^2+u_{-1}^2) \\
&= (1+\alpha)  (u_1^2 + u_0^2+u_{-1}^2)^2 -\alpha \left( u_0^2-2 |u_1| |u_{-1}|\right)^2 +{2q}(u_1^2+u_{-1}^2).
\end{align*}
Here we have used the following algebraic identity:
$$ 2 u_0^2 (|u_1| + |u_{-1}| )^2 + (u_1^2-u_{-1}^2)^2 - (u_1^2 + u_0^2+u_{-1}^2)^2 = -(u_0^2-2|u_1| |u_{-1}|)^2.$$
\I
Because of the constraint of total mass, we can convert our variational problem
to an equivalent one by adding $\int_\Omega[-2(A(1+\alpha)+q) (u_1^2+u_0^2+u_{-1}^2) + (1+\alpha) A^2]\, dx$ to the functional $2\int H_{TF}\, dx$.
That is, the new energy density
\begin{align}
2W &:= 2H_{TF} -\left(2A(1+\alpha)+2q \right)  (u_1^2+u_0^2+u_{-1}^2) + (1+\alpha)A^2 \nonumber  \\
&= (1+\alpha) (u_1^2 +u_0^2+ u_{-1}^2 -A)^2
  - \alpha \left( u_0^2-2 |u_1| |u_{-1}|\right)^2 -{2q}u_0^2 \label{W:MS-MS}
\end{align}

\I Since $\alpha<0$ and $q<0$,  every term of the function $W$ is non-negative. The function equals to zero if and only if
$$ \mathbf u=(\sqrt{A},0,0)\equiv\mathbf a \qquad\textrm{or}\qquad\mathbf u=(0,0,\sqrt{A})\equiv \mathbf b.$$
\I A measurable function $\mf{u}(x)$ on $\Omega$ satisfies $\int_\Omega W(\mf{u}(x))\, dx = 0$ if and only if there is a measurable set $U\subset \Omega$ such that
$$ \mathbf u = \mathbf a \,\chi_U + \mathbf b\,\chi_{\Omega\backslash U}.$$
\I By plugging such function $\mf{u}(x)$ into the two constraints (\ref{constraintN}) and (\ref{constraintM}), we  get
\[
A = n, \ |U| = \frac{1}{2}\left( 1+ \frac{m}{n}\right)|\Omega|.
\]
 \en
\end{proof}

\subsection{Ferromagnetic BEC $(\alpha<0)$: $0<q$ implies $3C$ state}\label{case:3C}

\begin{thm}
  Suppose $-1<\alpha<0$, $m\ge 0$  and $q>0$. Then the constrained variational problem $(P_{0})$ has a unique global minimizer
 $$ \mathbf u = \left( u_1,u_0,u_{-1} \right)$$
 where
 \begin{eqnarray*}
   u_1&=&\frac{q+b}{2q} \left[ n+ \frac{1}{\alpha}  \left( \frac{q}{2} - \frac{b^2}{2q} \right)\right]^{1/2 } \\
    u_0 &=& \left[\frac{q^2-b^2}{2q^2} n - \frac{q^2+b^2}{2q^2} \frac{1}{\alpha} \left( \frac{q}{2} - \frac{b^2}{2q} \right)\right]^{1/2}\\
   u_{-1} &=& \frac{q-b}{2q} \left[ n+ \frac{1}{\alpha} \left( \frac{q}{2} - \frac{b^2}{2q} \right) \right]^{1/2 }.
 \end{eqnarray*}
 The value $b$ is the unique root in $(\sqrt{q^2+2\alpha q n},q)$ of the cubic equation
 $$b^3 - (q^2+ 2\alpha q n) b + 2 \alpha q^2 m=0.$$
\end{thm}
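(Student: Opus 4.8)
The plan is to reproduce the Lagrangian/sum-of-squares strategy of the previous subsections, but to arrange the square terms so that the shifted energy density has a single interior zero rather than a two-point zero set. Concretely, I would introduce multipliers $\mu,\lambda$ for the two constraints and work on $\mathbb R_+^3$ with
\[
W(\mathbf u):=H_{TF}(\mathbf u)-\mu\,(u_1^2+u_0^2+u_{-1}^2)-\lambda\,(u_1^2-u_{-1}^2)+\text{const},
\]
so that $\inf\int_\Omega H_{TF}$ and $\inf\int_\Omega W$ differ only by the fixed quantity $\mu N+\lambda M$ plus a constant. If I can pick $\mu,\lambda$ and the additive constant so that $W\ge 0$ with a unique zero at a point $\mathbf u^\ast$ having all three components positive, then $\int_\Omega W\ge 0$ with equality if and only if $\mathbf u\equiv\mathbf u^\ast$ a.e.; since $\mathbf u^\ast$ is a single point this forces the global minimizer to be the constant $3C$ state, and uniqueness is automatic.

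The heart of the argument is a threefold completion of squares. Starting from the identity of the $MS+MS$ subsection,
\[
2H_{TF}=(1+\alpha)(u_1^2+u_0^2+u_{-1}^2)^2-\alpha\,(u_0^2-2u_1u_{-1})^2+2q(u_1^2+u_{-1}^2),
\]
I would first complete the square in $t=u_0^2$: since the coefficient of $t^2$ equals $(1+\alpha)-\alpha=1>0$, this produces the perfect square $\big(|\mathbf u|^2+\alpha(u_1+u_{-1})^2-\mu\big)^2$, fully absorbing the $u_0^2$-dependence and leaving a remainder that is a polynomial in $s:=u_1+u_{-1}$ and $d:=u_1-u_{-1}$ alone. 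I would then complete the square in $d$, whose coefficient turns out to be exactly $q$, and finally in $s^2$, whose coefficient is $-\alpha(1+\alpha)$. Under $-1<\alpha<0$ and $q>0$ all three leading coefficients are strictly positive, so with the proper choice of constant
\[
2W=\big(|\mathbf u|^2+\alpha(u_1+u_{-1})^2-\mu\big)^2+q\Big((u_1-u_{-1})-\tfrac{\lambda}{q}(u_1+u_{-1})\Big)^2+(-\alpha)(1+\alpha)\big((u_1+u_{-1})^2-w^\ast\big)^2
\]
is a genuine sum of three nonnegative squares. Its unique zero is read off by setting each square to zero, which fixes $u_1+u_{-1}$, then $u_1-u_{-1}$, then $u_0^2$ in succession, yielding a single point of $\mathbb R_+^3$.

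To make $\mathbf u^\ast$ explicit I would introduce $b:=q\,(u_1-u_{-1})/(u_1+u_{-1})$, which the second vanishing condition shows equals $\lambda$. That condition gives $u_1-u_{-1}=(b/q)(u_1+u_{-1})$, producing the prefactors $(q\pm b)/2q$ in the stated formulas; the first condition expresses $u_0^2$. Imposing the two constraints then yields two expressions for the common factor, namely $(u_1+u_{-1})^2=mq/b$ from the magnetization constraint and $(u_1+u_{-1})^2=n+\tfrac1\alpha\cdot\tfrac{q^2-b^2}{2q}$ from the mass constraint together with the first condition. Eliminating $(u_1+u_{-1})^2$ between these two gives precisely $b^3-(q^2+2\alpha qn)b+2\alpha q^2m=0$, and back-substitution reproduces the formulas for $u_1,u_0,u_{-1}$.

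It remains to show the cubic has exactly one admissible root, and this is where I expect the main work. With $f(b)=b^3-(q^2+2\alpha qn)b+2\alpha q^2m$ I would evaluate $f\big(\sqrt{q^2+2\alpha qn}\big)=2\alpha q^2m\le 0$ and $f(q)=2\alpha q^2(m-n)\ge 0$, using $\alpha<0$, $m\ge 0$, and $m\le n$ (the latter from the pointwise bound $u_1^2-u_{-1}^2\le|\mathbf u|^2$); the intermediate value theorem then gives a root in $(\sqrt{q^2+2\alpha qn},q)$, while $f'(b)=3b^2-(q^2+2\alpha qn)>0$ on this interval forces strict monotonicity and hence uniqueness. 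The genuinely delicate points, beyond closing the three-square algebra, are the admissibility checks: I must verify that this root yields $u_0^2>0$, $(u_1+u_{-1})^2>0$, and $(u_1+u_{-1})^2>(u_1-u_{-1})^2$, so that $\mathbf u^\ast$ lies in the interior of $\mathbb R_+^3$ (an honest $3C$ state rather than a boundary $2C$ or $NS$ configuration) and so that the completion in $u_0^2$ is taken at an admissible value; once these are secured, the standard integral argument of the first paragraph finishes the proof.
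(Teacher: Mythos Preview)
Your proposal is correct and follows essentially the same strategy as the paper: shift $H_{TF}$ by multiplier terms, express the result as a sum of three nonnegative squares with a single common zero, impose the constraints to derive the cubic $b^3-(q^2+2\alpha qn)b+2\alpha q^2 m=0$, and locate its unique admissible root by the sign pattern at the endpoints together with $f'(b)>0$ on the interval.

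The only noteworthy difference is the organization of the squares. The paper keeps two $u_0^2$-dependent squares,
\[
(1+\alpha)\bigl(|\mathbf u|^2-n\bigr)^2 \;-\;\alpha\Bigl(u_0^2-2u_1u_{-1}+\tfrac{a}{\alpha}\Bigr)^2\;+\;\tfrac1q\bigl((q-b)u_1-(q+b)u_{-1}\bigr)^2,
\]
whereas you first eliminate $u_0^2$ completely (coefficient $1$), then $d=u_1-u_{-1}$ (coefficient $q$), then $s^2=(u_1+u_{-1})^2$ (coefficient $-\alpha(1+\alpha)$), obtaining a triangular decomposition. Since $(q-b)u_1-(q+b)u_{-1}=q\,d-b\,s$, your second square and the paper's third square coincide with $b=\lambda$; the remaining two squares are just a different orthogonal splitting of the same nonnegative quadratic form, so the zero set and the ensuing algebra are identical. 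Your sequential completion is arguably more systematic; the paper's version makes the constraint $|\mathbf u|^2=n$ appear directly as one of the vanishing conditions. The admissibility checks you flag ($u_0^2\ge0$, $s^2\ge d^2$) are exactly the inequalities $|b|\le q$ and $b^2\ge q^2+2\alpha qn$ that the paper packages into the interval $(\sqrt{q^2+2\alpha qn},\,q)$.
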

\begin{proof}
\bn
\I
As in the proof of the previous section,
\begin{align*}
2H_{TF} &:= (u_1^2 +u_0^2+ u_{-1}^2)^2 + \alpha \left( 2 u_0^2 (|u_1| + |u_{-1}| )^2 + (u_1^2-u_{-1}^2)^2 \right) + 2q(u_1^2+u_{-1}^2) \\
&= (1+\alpha) (u_1^2 +u_0^2+ u_{-1}^2 )^2
    - \alpha \left( u_0^2-2 |u_1| |u_{-1}| \right)^2 + 2q (u_1^2+u_{-1}^2).
\end{align*}
Due to the two constraints, we can add $-2a(u_1^2+u_0^2+u_{-1}^2)$ and $-2b(u_1^2-u_{-1}^2)$ for some constants $a,b$ to the above expression without changing the constrained variational problem. We obtain
\begin{align*}
2H_{TF}&-2a(u_1^2+u_0^2+u_{-1}^2)-2b(u_1^2-u_{-1}^2)\\
&= (1+\alpha) (u_1^2 +u_0^2+ u_{-1}^2 )^2
    - \alpha \left( u_0^2-2 |u_1| |u_{-1}| \right)^2 \\
 &\quad  + 2(q-a-b) u_1^2 - 2a u_0^2 + 2(q-a+b) u_{-1}^2 \\
&=  (1+\alpha) (u_1^2 +u_0^2+ u_{-1}^2 )^2
    - \alpha \left( u_0^2-2 |u_1| |u_{-1}| \right)^2 \\
& \quad+ 2\left[ (q-a-b)^{1/2}|u_1| - (q-a+b)^{1/2} |u_{-1}|  \right]^2\\
& \quad +4  (q-a-b)^{1/2}(q-a+b)^{1/2}  |u_1| |u_{-1}|    -2 a|u_0|^2
\end{align*}
There will be two relations to determine $a$ and $b$.    First, we introduce  the relation
\be \label{eq:ab}
 a= (q-a-b)^{1/2}(q-a+b)^{1/2} .
\ee
This relation is equivalent to
\be \label{eq:a1}
 a = \frac{q}{2} - \frac{b^2}{2q},
 \ee
and leads to
$$ q-a-b=\frac{q}{2} +\frac{b^2}{2q} -b = \frac{(q-b)^2}{2q}\ge 0,$$
$$  q-a+b=\frac{q}{2} +\frac{b^2}{2q} +b = \frac{(q+b)^2}{2q}\ge 0.$$
Since $a$ should be  non-negative from (\ref{eq:ab}), we have
\be \label{eq:b1}
a = \frac{q}{2} - \frac{b^2}{2q}\ge 0
\quad\Rightarrow\quad
|b|\le q.
\ee
With this choice of $a,b$ satisfying the relations (\ref{eq:a1}) and (\ref{eq:b1}),  we define
\begin{align}
2W &= 2H_{TF}-2[a+(1+\alpha)n](u_1^2+u_0^2+u_{-1}^2)-2b(u_1^2-u_{-1}^2)  +(1+\alpha)n^2 -\frac{a^2}{\alpha} \nonumber\\
&= (1+\alpha) (u_1^2 +u_0^2+ u_{-1}^2 -n)^2
    - \alpha \left( u_0^2-2 |u_1| |u_{-1}|  + \frac{a}{\alpha}\right)^2  \nonumber\\
 & \quad  + \frac{1}{q}\left[ (q-b) |u_1| -(q+b) |u_{-1}|  \right]^2.  \label{W:3C}
\end{align}
and  the original constrained variational problem is equivalent to  $\inf \int_\Omega  W(\mf{u}(x))\, dx$.
\I
For any given $q>0$,  $W$ of (\ref{W:3C}) has a unique minimizer   $(u_1,u_0,u_{-1})$.  This leads to  the following algebraic system for $(u_1,u_0,u_{-1})$:
\begin{align}
 u_1^2 + u_0^2 + u_{-1}^2 &= n \label{eq:4.5-1} \\
 u_0^2-2|u_1| |u_{-1}| &= -\frac{a}{\alpha} \label{eq:4.5-2} \\
  (q-b) |u_1| - (q+b) |u_{-1}|  &=0 \label{eq:4.5-3}
 \end{align}
For any fixed $b$, we solve this algebraic system for $(u_1,u_0,u_{-1})$:
\begin{align}
|u_1| &=  \frac{q+b}{2q} \left( n+ \frac{a}{\alpha} \right)^{1/2 }   \label{eq:4.6-1}\\
|u_{-1}| &= \frac{q-b}{2q} \left( n+ \frac{a}{\alpha} \right)^{1/2 }\label{eq:4.6-2} \\
 |u_0| &= \left[\frac{q^2-b^2}{2q^2} n + \frac{q^2+b^2}{2q^2} \frac{a}{\alpha}\right]^{1/2}. \nonumber
\end{align}
\I  Our remaining task is find a relation to determine $b$.  Since the constant state $(u_1,u_0,u_{-1})$ is the unique minimizer of $W$, we apply the constraint of total magnetization to this constant state and find
\be
 u_1^2- u_{-1}^2 =m. \label{eq:4.5-4}
\ee
 From (\ref{eq:4.5-4}),(\ref{eq:4.6-1}) and (\ref{eq:4.6-2}), we obtain
$$ \left( n + \frac{a}{\alpha}\right) \left[ \left( \frac{q+b}{2q}\right)^2 - \left( \frac{q-b}{2q}\right)^2 \right] = m.$$
Plugging (\ref{eq:a1}) into this equation, we obtain
$$ b\,\left( n+ \frac{1}{2\alpha q} (q^2-b^2) \right) = mq,$$
or equivalently
\[
g(b) = 0,
\]
where
$$ g(b):=b^3 - (q^2+ 2\alpha q n) b + 2 \alpha q^2 m=0.$$
This is the equation to determine $b$.  In addition, there are other natural constraints that $b$ should satisfy.
In fact,
subtracting  (\ref{eq:4.5-2}) from (\ref{eq:4.5-1}), we obtain
\be \label{cond:a1}
 n+ \frac{a}{\alpha}=(|u_1|+|u_{-1}|)^2 \ge 0.
 \ee
Combing this with (\ref{eq:a1}) yields
\be \label{cond:b1}
 b^2 \ge q^2 + 2\alpha q n.
\ee
On the other hand, we have $|b|\le q$ from (\ref{eq:b1}).  Therefore, $b$ must lie in the interval $[\sqrt{q^2+2\alpha q n},q]$.
\I We claim that $g(b)=0$ has a unique root in $[\sqrt{q^2+4\alpha q n},q]$.
Because we have
$$ g(\sqrt{q^2+2\alpha q n})= \left({q^2+2\alpha q n}\right)^{3/2} - (q^2+ 2\alpha q n) \sqrt{q^2+2\alpha q n} + 2 \alpha q^2 m = 2\alpha q^2 m<0,$$
$$ g(q) = q^3 - (q^2+ 2\alpha q n) q + 2 \alpha q^2 m=2\alpha q (m-n) >0$$
and
$$
 g'(b) = 3 b^2 - (q^2+ 2\alpha n q)
  = 2 b^2 +b^2 - (q^2+2\alpha n q)
  > 0,
$$
The function $g$ is strictly monotone on the interval $[\sqrt{q^2+2\alpha q n},q]$. Therefore, it has
a unique root between on $[\sqrt{q^2+2\alpha q n},q]$.
\I
From the above discussion, we conclude there exists a unique $3C$ state when $c_s<0$ and $q>0$.
\en
\end{proof}

\begin{remark}
 Suppose $\alpha<0$ and $q=0$. In this case, following step 1 of Section \ref{case:3C} with $a=b=0$, we set
 $$\aligned
    2W:=& 2H_{TF} - 2(1+\alpha)n(u_1^2+u_0^2+u_{-1}^2) + (1+\alpha)^2n^2 \\
      =& (1+\alpha) (u_1^2 +u_0^2+ u_{-1}^2 -n )^2
    - \alpha \left( u_0^2-2 |u_1| |u_{-1}| \right)^2 .
  \endaligned
 $$
  The constrained variational problem ($P_{0}$) is equivalent to
the  variational problem
  $$ \inf \int_\Omega W(u_1,u_0,u_{-1})\,dx.$$
Its constrained minimizer $\mf{u}=(u_1,u_0,u_{-1})$ satisfies
 $$
 \left\{\begin{array}{rcl}
  u_1^2(x) + u_0^2(x) + u_{-1}^2(x) &=& n \\
  u_0^2(x) - 2|u_1(x)||u_{-1}(x)| &=& 0
 \end{array}\right.
 $$
 for almost all $x\in \Omega$.  This gives $|u_1(x)|+|u_{-1}(x)|=\sqrt{n}$.
 Let us call $u_1^2(x)-u_{-1}^2(x) = \tilde{m}(x)$.  From the conservation of total magnetization, we should require
 $\int \tilde{m}(x)\, dx = M$.
We have
\[
(|u_1(x)|-|u_{-1}(x)|)(|u_1(x) +|u_{-1}(x)|) = \tilde{m}(x).
\]
Thus, we get
\[
u_1(x)=\frac{n+\tilde{m}(x)}{2\sqrt{n}},  u_{-1}(x)=\frac{n-\tilde{m}(x)}{2\sqrt{n}}.
\]
Since $\tilde{m}(x)$ can be any arbitrary bounded measurable function with $|\tilde{m}(x)|\le n$, there are infinite many Thomas-Fermi solutions in this case.
\end{remark}

\subsection{BEC with $(\alpha=0)$}

\begin{thm}
 Suppose $\alpha=0$. Then the global minimizer of the constrained variational problem ($P_{0}$) in a finite domain $\Omega$  is in either one of the following cases:
 \begin{itemize}
  \item[(i)] If $q=0$, then a minimizer takes the form $\mf{u}(x)=(u_1(x),u_0(x),u_{-1}(x))$  such that
  \begin{equation}\label{eq:3.35}
  u_1^2(x)+u_0^2(x)+u_{-1}^2(x)=n\quad\textrm{for almost all }x\in\Omega \textrm{ with }\int_\Omega u_1^2 - u_{-1}^2\,dx = M.
  \end{equation}
  \item[(ii)] If $q>0$, then a minimizer takes the form $\mf{u}(x)=(u_1(x),u_0(x),0)$  such that
  \begin{equation}\label{eq:3.36}
  u_1^2(x)+u_0^2(x)=n\quad\textrm{for almost all }x\in\Omega \textrm{ with }\int_\Omega u_1^2\,dx = M.
  \end{equation}
  \item[(iii)] If $q<0$, then a minimizer takes the form $\mf{u}(x)=(u_1(x),0,u_{-1}(x))$  such that
  \begin{equation}\label{eq:3.37}
   u_1^2(x)+u_{-1}^2(x)=n\quad\textrm{for almost all }x\in\Omega \textrm{ with }\int_\Omega u_1^2-u_{-1}^2\,dx = M.
   \end{equation}
 \end{itemize}
\end{thm}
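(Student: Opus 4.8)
The plan is to exploit the fact that when $\alpha=0$ the Hamiltonian \eqref{eq:3.3} collapses to
\[
H_{TF}(\mf{u}) = \tfrac12\,\rho^2 + q\,(u_1^2+u_{-1}^2),\qquad \rho := u_1^2+u_0^2+u_{-1}^2,
\]
so that the energy splits into a \emph{density part} $\frac12\int_\Omega \rho^2\,dx$, which sees only the total density $\rho$ and is controlled by the mass constraint $\int_\Omega\rho\,dx=N$, and a \emph{Zeeman part} $q\int_\Omega(u_1^2+u_{-1}^2)\,dx$, which I would treat separately according to the sign of $q$. First I would bound the density part from below by Jensen's inequality (convexity of $t\mapsto t^2$): since $\int_\Omega\rho\,dx=N$,
\[
\tfrac12\int_\Omega \rho^2\,dx \ge \frac{N^2}{2|\Omega|},
\]
with equality if and only if $\rho\equiv n=N/|\Omega|$ a.e. This single inequality forces the total density to be constant in every case and is the backbone of all three parts.

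Next I would dispose of the Zeeman part by rewriting $u_1^2+u_{-1}^2$ so that one of the two conserved quantities appears explicitly. For $q>0$ (case (ii)), write $u_1^2+u_{-1}^2=(u_1^2-u_{-1}^2)+2u_{-1}^2$; integrating and using $\int_\Omega(u_1^2-u_{-1}^2)\,dx=M$ gives
\[
q\int_\Omega(u_1^2+u_{-1}^2)\,dx = qM + 2q\int_\Omega u_{-1}^2\,dx \ge qM,
\]
with equality iff $u_{-1}\equiv0$. For $q<0$ (case (iii)), write instead $u_1^2+u_{-1}^2=\rho-u_0^2$; integrating and using $\int_\Omega\rho\,dx=N$ gives
\[
q\int_\Omega(u_1^2+u_{-1}^2)\,dx = qN - q\int_\Omega u_0^2\,dx \ge qN,
\]
with equality iff $u_0\equiv0$ (here $-q>0$). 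For $q=0$ (case (i)) the Zeeman part is simply absent.

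Adding the two lower bounds, the total energy is at least $\frac{N^2}{2|\Omega|}$ for $q=0$, at least $\frac{N^2}{2|\Omega|}+qM$ for $q>0$, and at least $\frac{N^2}{2|\Omega|}+qN$ for $q<0$. The crucial point is that in each case the equality conditions for the two lower bounds are mutually \emph{compatible}: $\{\rho\equiv n\}$ together with $\{u_{-1}\equiv0\}$ (resp.\ $\{u_0\equiv0\}$, resp.\ no extra condition) can hold simultaneously, so each lower bound is in fact attained. A minimizer is then precisely a configuration saturating all the equalities, which yields the stated forms \eqref{eq:3.35}--\eqref{eq:3.37}; note that the remaining component distribution is completely free subject to one scalar integral constraint, so the minimizer is highly non-unique.

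The one point requiring care---and the main (mild) obstacle---is \emph{feasibility}: one must exhibit at least one admissible $\mf{u}$ realizing the equality conditions while meeting the remaining linear constraint. For instance in case (ii) one needs $u_1^2+u_0^2\equiv n$ with $\int_\Omega u_1^2\,dx=M$ and $0\le u_1^2\le n$ pointwise, which is solvable precisely because $0\le m\le n$ (guaranteed by $|u_1^2-u_{-1}^2|\le\rho$ together with the normalization $m\ge0$); the analogous checks in cases (i) and (iii) are identical. Once feasibility is in hand the characterization of minimizers is immediate, since any admissible $\mf{u}$ violating one of the equality conditions strictly increases the corresponding term, and hence the total energy, above the sharp lower bound.
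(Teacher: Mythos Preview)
Your argument is correct. The paper's proof reaches the same conclusions but is organized differently: rather than invoking Jensen's inequality on the integral, the paper follows its systematic ``$W$-function'' template from the earlier subsections, adding Lagrange-multiplier terms to $H_{TF}$ so that the shifted density becomes a pointwise nonnegative sum of squares. Concretely, for $q=0$ the paper writes $W(\mathbf u)=\tfrac12(u_1^2+u_0^2+u_{-1}^2-n)^2$; for $q>0$ it specializes the $NS+MS$ computation (noting $q_2=0$ when $\alpha=0$) to obtain $2W=(u_1^2+u_0^2+u_{-1}^2-A)^2+4q\,u_{-1}^2$; and for $q<0$ it specializes the $MS+MS$ computation to obtain $2W=(u_1^2+u_0^2+u_{-1}^2-A)^2-2q\,u_0^2$. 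In each case $W\ge0$ pointwise and the zero set is read off directly.

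The two approaches are really the same convexity argument viewed at different levels: your Jensen bound $\tfrac12\int\rho^2\ge N^2/(2|\Omega|)$ is exactly the integrated form of the paper's pointwise square $(\rho-n)^2\ge0$, and your algebraic rewritings of the Zeeman term using the constraints are equivalent to the paper's choice of multipliers $\beta_1,\beta_2$. Your route is a bit more transparent and self-contained for this degenerate case, while the paper's has the virtue of reusing the machinery already built for the harder cases $\alpha\neq0$. Your explicit discussion of feasibility (that the equality conditions are simultaneously attainable because $0\le m\le n$) is a point the paper leaves implicit.
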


\begin{proof}
\begin{enumerate}
 \item When $\alpha = 0$ and $q=0$, we have $H_{TF}=\frac{1}{2}(u_1^2+u_{0}^2+u_{-1}^2)^2$.  We set
  \begin{align*}
  W(\mathbf u) & = H_{TF}(\mathbf u) -n(u_1^2+u_0^2+u_{-1}^2) +\frac{n^2}{2}
      = \frac{1}{2}(u_1^2+u_0^2+u_{-1}^2-n)^2.
  \end{align*}
Then  the constrained variational problem $\left( \inf \int_\Omega W(u_1,u_0,u_{-1})\,dx \right)$ is equivalent to the original one and   its minimum is characterized by \eqref{eq:3.35}.
 \item When $\alpha = 0$ and $q > 0$, we follow the proof of Section \ref{case:NS-MS}.
In this case, $q_2 = 0$ and $W$ of (\ref{eq:W-NS+MS}) becomes
\[
2W =    \left( u_1^2 + u_0^2 +  u_{-1}^2 - A\right)^2   + 4q   u_{-1}^2.
\]
Because $W\ge 0$ and the constrained minimization can only occur when $\int W(\mf{u}(x))\, dx = 0$.  This implies $W(\mf u (x))=0$ for almost all $x\in \Omega$.  This forces $u_{-1}(x) = 0$ and $u_1^2(x)+u_0^2(x)+u_{-1}^2(x)=A$.  From the total mass constraint, we have to choose $A=n$.  %
\item When $\alpha = 0$ and $q < 0$, we follow the proof of Section \ref{case:MS-MS}.
In this case, the normalized energy density $W$ of (\ref{W:MS-MS}) becomes
 \begin{align*}
2W &=  (u_1^2 +u_0^2+ u_{-1}^2 -A)^2 -{2q}u_0^2.
\end{align*}
Similar to the argument of the previous step, we find $u_0(x)\equiv 0$ and $A=n$.
\end{enumerate}
\end{proof}

\section{$\Gamma$-convergence}\label{sec:gamma_conv}
\subsection{Interfacial and boundary energy functional}
The Thomas-Fermi solutions found in the last section are  not unique in general.  In fact, the pure states (Sections \ref{case:2C}, \ref{case:3C}) are unique, while the mixed states (Sections \ref{case:NS-MS}, \ref{case:NS-2C}, \ref{case:MS-MS}) are not unique. In the case of mixed state, which has the form: $\mf{u}(x) = \mf{a}\chi_{U}(x)+\mf{b}\chi_{\Omega\setminus U}(x)$,   only the ratio $|U|/|\Omega|$ is determined,  but the measurable set $U$ can be arbitrary.   It has been pointed out by Gurtin that interfaces are allowed to form without changing the bulk energy $\int_{\Omega} H_{TF}(\mf{u}(x))\, dx$~\cite{G1,G2}.  To select a physical solution, we adopt the $\Gamma$-convergence theory, which introduces an interfacial energy functional to penalize the formation of interfaces.  This interfacial energy functional  is the $\Gamma$-limit of the next-order expansion of the energy functional $\ml{E}_\ep[\mf{u}]$ as $\ep \to 0$.
To be precise, let us recall that 
\[
\ml{E}_\ep [\mf{u}] := \int_\Omega \ep^2 |\grad \mf{u}|^2 + H_{TF}(\mf{u})\, dx.
\]
We write the domain of $\ml{E}_\ep$ to be
\[
\ms{A} := \{ \mf{u}\in (H^1_0(\Omega,\R_+))^3 \cap (L^4(\Omega,\R_+))^3 |\, \ml{N}[\mf{u}]=N, \ml{M}[\mf{u}]=M\}.
\]
We expect
\[
\lim_{\ep \to 0} \inf \ml{E}_\ep[\mf{u}] = \inf \ml{E}_0[\mf{u}],
\]
and thus  look for  the minimizers  of $\ml{E}_0$ (i.e. the Thomas-Fermi solutions).  Let us call them
\[
\ms{A}_0 = \mbox{ arg min} \{ \ml{E}_0[\mf{u}] | \,\mf{u}\in (L^2(\Omega,\R_+))^3, \ml{N}[\mf{u}]=N, \ml{M}[\mf{u}]=M\},
\]
and the corresponding minimal  energy $E_0$.  For mixed states, the set $\ms{A}_0$, which is not a singleton, can also be expressed as
\be \label{eq:A0}
\ms{A}_0 = \{ \mf{u}=\mf{a}\chi_U + \mf{b}\chi_{\Omega\setminus U} |\,  r= |U|/|\Omega|, \mf{a},\mf{b} \mbox{ are given in } \eqref{eq:vol1}, \eqref{eq:vol2} \mbox{ or } \eqref{eq:vol3}\}.
\ee  
We then define the next order energy functional $\ml{G}_\ep:\ms{A}\to \R$ to be
\[
\ml{G}_\ep[\mf{u}] = \frac{\ml{E}_\ep [\mf{u}] - E_0}{\ep}.
\]
From the previous section, this functional  has the form
\be
\ml{G}_\ep [\mf{u}] := \int_\Omega \ep |\grad \mf{u}|^2 + \frac{1}{\ep}W(\mf{u})\, dx,
\ee
where $W$ is given in (\ref{W:NS-MS}), (\ref{W:NS-2C}), (\ref{W:MS-MS})  which has the properties: $W(\mf{u})\ge 0$ and $W(\mf{u})=0$ if and only if $\mf{u}=\mf{a}$ or $\mf{b}$.
We expect that 
$$\lim_{\ep \to 0}\inf_{\mf{u}\in \ms{A}}  \ml{G}_\ep[\mf{u}] = \inf_{\mf{u}\in \ms{A}_0} \ml{G}_0[\mf{u}]
.$$
Here, the functional $\ml{G}_0$ is so-called the $\Gamma$-limit of $\ml{G}_\ep$, where the precise definition will be given in Theorem \ref{Thm:Gamma-limit}.  We will prove that  $\ml{G}_0: \ms{A}_0 \to \R$ is given by: 
$$
\ml G_0[\mathbf u] =\left\{\begin{array}{ll}
  2  g(\mathbf a,\mathbf b)\, \textrm{Per}_\Omega(\mathbf u=\mathbf a)& \textrm{for } \mathbf u=\mathbf a \chi_U + \mathbf b\chi_{\Omega\backslash U}\in \ms{A}_0\cap \left(BV(\Omega;\mathbb R_+)\right)^3,\\
   \,  + 2g(0,\mathbf a)\,\mathcal H^2(\{x\in\partial \Omega: \mathbf u(x)=\mathbf a\})   
   & \\
    \, +2g(0,\mathbf b)\,\mathcal H^2(\{x\in\partial \Omega: \mathbf u(x)=\mathbf b\}) &
    \\
      +\infty &  \textrm{otherwise} .
\end{array}\right.
$$
where
$$
g(\mathbf v,\mathbf u)= \inf\left\{\ \int_0^1 \sqrt{W(\boldsymbol \gamma(t))}\,|\boldsymbol \gamma'(t)|\,dt:\,
       \boldsymbol\gamma:[0,1]\to\mathbb R^3_+ \textrm{ Lipchitz continuous}, \,\boldsymbol \gamma(0)=\mathbf v,\,\boldsymbol\gamma(1)=\mathbf u \right\}
$$
represents the minimal energy required to go from a constant state $\mf v$ to another constant state $\mf u$.  The notation $\textrm{Per}_\Omega(\mathbf u=\mathbf a)$ is the perimeter of the set $\{\mf u = \mf a\}$ in $\Omega$, and $\mathcal H^2$ represents the two-dimensional Hausdorff measure.
 
The intuition why $\ml{G}_0$ contains an interfacial energy can be explained as the follows.   It is expected that the minimizer $\mf{u}_\ep$ of $\ml{E}_\ep$  has a sharp transition from state $\mf{a}$ to state $\mf{b}$ across the interface $\p U\cap\Omega$, but have {no variation} up to order $\ep$ along the tangential direction of the interface.   The layer thickness should be of order $\ep$ so that the kinetic energy $\ep \int |\grad \mf{u}|^2 $and the bulk energy $\frac{1}{\ep}\int W(u)$ have the same order of magnitude.   The minimal energy occurs only when these two energies are balanced, that is
\[
\int_\Omega \ep |\grad \mf{u} |^2 + \frac{1}{\ep}W(\mf{u}) = 2 \int_\Omega \sqrt{W(\mf{u})} |\grad \mf{u}|\, dx. 
\]
By the co-area formula, the energy contributed by the internal interface is roughly  $2g(\mf{a},\mf{b}) \textrm{Per}_\Omega(\mathbf u=\mathbf a)$.
This is the interfacial energy.  Similar argument can also explain the appearance of  the boundary layer energy in $\ml{G}_0$.

Finally, the physical solution is selected by
\[
\inf_{\mf{u}\in \ms{A}_0} \ml{G}_0[\mf{u}].
\]
This minimization problem is a geometric problem and can be solved by standard direct method in calculus of variations.

\subsection{Main Theorems}
We list our main theorems below.  Although their proofs are mainly followed by the standard procedure of $\Gamma$-convergence arguments in \cite{S,S1}, the quadratic constraints (i.e. $\ml{N}[\mf{u}]=N$, $\ml{M}[\mf{u}]=M$) in our present study require some modifications. We put these proofs in the next section for completeness.

\begin{thm} \label{Thm:Gamma-limit}
The sequence $\{\ml G_\epsilon\}$ $\Gamma$-converges to $\ml G_0$ in $L^2(\Omega)$-topology. This means the follows:
\begin{enumerate}
 \item (Lower semi-continuity) For any sequence $\{\mathbf u_\epsilon\}\subset \ms{A}$ converging to some $\mathbf u_0\in\ms{A}_0$ in $(L^2(\Omega))^3$, we have
        $$ \ml G_0[\mathbf u_0] \le \liminf_{\epsilon\to 0} \ml G_\epsilon[\mathbf u_\epsilon].$$
  \item (Recovery sequence) For any $\mathbf v_0\in \ml{A}_0$, there exists a sequence $\{\mathbf v_\epsilon\}\subset \ms{A}$  converging to $\mathbf v_0$ in $(L^2(\Omega))^3$  such that
  $$ \ml G_0[\mathbf v_0] = \lim_{\epsilon\to 0} \ml G_\epsilon [\mathbf v_\epsilon].$$
\end{enumerate}
\end{thm}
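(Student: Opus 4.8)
The plan is to establish the two $\Gamma$-convergence inequalities separately, following the Modica--Sternberg geodesic-distance method for the vectorial Cahn--Hilliard functional, while inserting two extra ingredients beyond \cite{S,S1}: the treatment of the zero Dirichlet boundary condition (which is what produces the contact energies $g(0,\mf a)$ and $g(0,\mf b)$), and the exact enforcement of the mass and magnetization constraints in the recovery sequence.

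For the lower bound (Part 1) I would first note that the constraints are inert: if $\mf u_\ep\to\mf u_0$ in $(L^2(\Omega))^3$ then $\ml N[\mf u_\ep]\to\ml N[\mf u_0]$ and $\ml M[\mf u_\ep]\to\ml M[\mf u_0]$, so $\mf u_0\in\ms A_0$ automatically and the constraints do not obstruct the liminf. Passing to a subsequence realizing the (assumed finite) liminf, I use the pointwise inequality $\ep|\na\mf u|^2+\tfrac1\ep W(\mf u)\ge 2\sqrt{W(\mf u)}\,|\na\mf u|$ to bound $\ml G_\ep$ below by the weighted-length functional $\int_\Omega 2\sqrt{W(\mf u_\ep)}\,|\na\mf u_\ep|\,dx$. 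The key device is that for any base point $\mf p$ the map $\mf q\mapsto g(\mf p,\mf q)$ is Lipschitz and satisfies $|\na[g(\mf p,\mf u)]|\le\sqrt{W(\mf u)}\,|\na\mf u|$ a.e., so its composition with $\mf u_\ep$ has uniformly bounded total variation. To recover the boundary contact energy I extend each $\mf u_\ep$ by $0$ across $\partial\Omega$ (legitimate since $\mf u_\ep\in(H^1_0)^3$), so that the trace $0$ forces, in the limit, transitions $0\to\mf a$ and $0\to\mf b$ exactly on the boundary sets $\{\mf u_0=\mf a\}$ and $\{\mf u_0=\mf b\}$. A De Giorgi--Letta localization argument then bounds $\liminf\ml G_\ep$ below by the sum of three mutually singular contributions -- using $g(\mf a,\cdot)$ on an interior neighborhood of $\partial U\cap\Omega$ and $g(0,\cdot)$ on disjoint boundary collars -- each estimated by $BV$ lower semicontinuity; summing yields $\ml G_0[\mf u_0]$.

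For the recovery sequence (Part 2) I first reduce, by density of polyhedral sets in $BV$ together with the continuity of each term of $\ml G_0$ under such approximation, to a target $\mf v_0$ whose interface $\partial U\cap\Omega$ is smooth and meets $\partial\Omega$ transversally. I then build $\mf v_\ep$ explicitly: in an $O(\ep)$-tube around $\partial U\cap\Omega$ I interpolate between $\mf a$ and $\mf b$ along a near-optimal Lipschitz geodesic $\bs\gamma$ for $g(\mf a,\mf b)$, reparametrized by signed distance so the one-dimensional profile realizes the equipartition $\ep|\na\mf v_\ep|^2=\tfrac1\ep W(\mf v_\ep)$; in an $O(\ep)$-collar of $\partial\Omega$ I interpolate from the bulk value down to $0$ along near-optimal geodesics for $g(0,\mf a)$ and $g(0,\mf b)$. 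A direct computation, using that the optimal profile carries energy $2g(\cdot,\cdot)$ per unit interfacial area, gives $\mf v_\ep\to\mf v_0$ in $(L^2)^3$ and $\ml G_\ep[\mf v_\ep]\to\ml G_0[\mf v_0]$.

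The step I expect to be the main obstacle is the last one, and it is precisely the modification forced by the quadratic constraints. The profile just constructed satisfies them only up to the $O(\ep)$-volume of its transition layers, $\ml N[\mf v_\ep]=N+O(\ep)$ and $\ml M[\mf v_\ep]=M+O(\ep)$, whereas the theorem requires $\mf v_\ep\in\ms A$ exactly. To repair this I introduce a two-parameter correction, replacing the bulk values $\mf a,\mf b$ by $(1+s_1)\mf a,(1+s_2)\mf b$ and solving the resulting $2\times2$ system for $(s_1,s_2)=O(\ep)$ that annihilates both constraint errors; the system is nonsingular because in each mixed-state case $\mf a$ and $\mf b$ contribute independently to $(\ml N,\ml M)$. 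Since $W$ vanishes to second order at $\mf a$ and $\mf b$, such a rescaling adds bulk energy of order $\tfrac1\ep\,O(s_i^2)=O(\ep)\to0$ while leaving the interfacial energy unchanged to leading order, so the corrected sequence still satisfies $\ml G_\ep[\mf v_\ep]\to\ml G_0[\mf v_0]$ and now lies exactly in $\ms A$. Verifying simultaneously the solvability of this $2\times2$ system and the energetic negligibility of the perturbation is the technical heart of the argument.
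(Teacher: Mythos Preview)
Your proposal is correct and follows essentially the same route as the paper: Modica--Sternberg geodesic distance with extension by zero across $\partial\Omega$ for the liminf, and optimal one-dimensional profiles glued along the signed distance for the limsup, with a two-parameter constraint repair at the end. The paper's $\Lambda_1^\delta/\Lambda_2^\delta$ splitting is exactly the localization you describe, using $\varphi_{\mf a}=g(\mf a,\cdot)$ on the interior collar and $\varphi_{\mf 0}=g(\mf 0,\cdot)$ on the boundary collar.

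The one substantive difference is the constraint-correction mechanism. You propose to rescale the bulk values, $\mf a\mapsto(1+s_1)\mf a$ and $\mf b\mapsto(1+s_2)\mf b$, whereas the paper adds a compactly supported perturbation $\alpha_1(\ep)\boldsymbol\varphi+\alpha_2(\ep)\boldsymbol\psi$ with $\mathrm{supp}\,\boldsymbol\varphi\subset V$, $\mathrm{supp}\,\boldsymbol\psi\subset\Omega\setminus V$, both living entirely in the flat region $\Omega_5$ away from every transition layer. The paper's choice is slightly cleaner because the correction does not touch the interfacial or boundary profiles at all, so the energy estimate on the layers is literally unchanged; your global rescaling shifts the endpoints of all three profiles by $O(\ep)$ and you must argue separately that this perturbation of the transition energies is negligible (it is, by Lipschitz continuity of $g$ and the quadratic vanishing of $W$, but it is an extra check). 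Either way the $2\times2$ Jacobian is invertible in every mixed-state case, so both methods succeed. You should also remark on the case $\mf v_0\in\ms A_0\setminus(BV)^3$: here $\ml G_0[\mf v_0]=+\infty$ and it suffices to exhibit \emph{any} sequence in $\ms A$ converging to $\mf v_0$ in $L^2$, which the paper does by mollification plus the same implicit-function correction; the liminf inequality already forces $\ml G_\ep[\mf v_\ep]\to+\infty$.
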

\noindent Such $\ml{G}_0$ is called the $\Gamma$-limit of $\ml{G}_\ep$.

\begin{thm}\label{Thm:compactness}
Suppose that $\{\mathbf u_\epsilon\}$  is a family in $\ms{A}$ with an uniformly bounded energy, that is
\be 
\ml{G}_\epsilon[\mathbf u_\epsilon]\le C_0 \label{eq:boundedness}
\ee
for some positive constant $C_0$.
Then there exists a subsequence $\{\mathbf u_{\epsilon_j}\}$ converges to some $\mathbf u_0\in\ms{A}_0$ in $(L^2(\Omega))^3$ as $j\to\infty$. 
\end{thm}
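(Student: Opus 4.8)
The plan is to run the standard Modica--Mortola compactness scheme, adapted to the vectorial, unbounded target $\R_+^3$ and to the two quadratic constraints. The starting observation is that the hypothesis $\ml{G}_\ep[\mf{u}_\ep]\le C_0$ controls the potential part, $\int_\Omega W(\mf{u}_\ep)\,dx \le C_0\,\ep \to 0$. Since the construction of $W$ in Section~\ref{sec:TF} makes it nonnegative with quartic growth — the leading perfect square in each expression for $2W$ gives a bound of the form $W(\mf{p}) \ge c|\mf{p}|^4 - C$ with $c,C>0$, the quadratic and constant corrections being lower order — this yields a uniform bound $\sup_\ep \int_\Omega |\mf{u}_\ep|^4\,dx < \infty$. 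In particular $\{\mf{u}_\ep\}$ is bounded in $(L^4(\Omega))^3$, hence in $(L^2(\Omega))^3$, and the family $\{|\mf{u}_\ep|^2\}$ is equi-integrable.

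Next I would introduce the auxiliary scalar field $\Phi(\mf{p}) := g(\mf{a},\mf{p})$, the geodesic quasi-distance from the well $\mf{a}$ in the degenerate conformal metric $\sqrt{W}\,|d\mf{p}|$ from the statement. Being a distance in this metric, $\Phi$ is locally Lipschitz and satisfies the eikonal bound $|\na_{\mf{p}}\Phi(\mf{p})| \le \sqrt{W(\mf{p})}$, so the chain rule for Lipschitz compositions with Sobolev maps gives $|\na(\Phi\circ\mf{u}_\ep)| \le \sqrt{W(\mf{u}_\ep)}\,|\na\mf{u}_\ep|$ a.e. Combined with the arithmetic--geometric mean inequality this produces
\[
\int_\Omega |\na(\Phi\circ\mf{u}_\ep)|\,dx \le \int_\Omega \sqrt{W(\mf{u}_\ep)}\,|\na\mf{u}_\ep|\,dx \le \frac12\int_\Omega\Big(\ep|\na\mf{u}_\ep|^2 + \tfrac1\ep W(\mf{u}_\ep)\Big)\,dx = \frac12\,\ml{G}_\ep[\mf{u}_\ep] \le \frac{C_0}{2}.
\]
Together with the $L^1$ bound on $\Phi\circ\mf{u}_\ep$ (from the $L^4$ bound and the at-most-cubic growth of $\Phi$), the sequence $\{\Phi\circ\mf{u}_\ep\}$ is bounded in $BV(\Omega)$. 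By the compact embedding $BV(\Omega)\eto L^1(\Omega)$ I extract a subsequence $\ep_j\to 0$ with $\Phi\circ\mf{u}_{\ep_j}\to v$ in $L^1(\Omega)$ and a.e.

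It remains to identify the limit. Passing to a further subsequence, $W(\mf{u}_{\ep_j}(x))\to 0$ for a.e.\ $x$; since $W$ is coercive and vanishes only at the two points $\mf{a},\mf{b}$, the sequence $\{\mf{u}_{\ep_j}(x)\}_j$ is bounded for a.e.\ $x$ with all cluster points in $\{\mf{a},\mf{b}\}$. Because $\Phi(\mf{a})=0 \ne \Phi(\mf{b})=g(\mf{a},\mf{b})>0$ — the two wells are at strictly positive quasi-distance, as any connecting path must cross a small sphere about $\mf{a}$ on which $\sqrt{W}$ is bounded below — the a.e.\ limit $v(x)$ takes only the values $0$ and $g(\mf{a},\mf{b})$, and its value pins down the cluster point uniquely, so $\mf{u}_{\ep_j}(x)\to\mf{u}_0(x)$ a.e., where $\mf{u}_0 = \mf{a}\chi_U + \mf{b}\chi_{\Omega\setminus U}$ with $U=\{v=0\}$. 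The $L^4$ bound plus a.e.\ convergence give equi-integrability of $|\mf{u}_{\ep_j}|^2$, so Vitali's convergence theorem upgrades this to $\mf{u}_{\ep_j}\to\mf{u}_0$ in $(L^2(\Omega))^3$. Finally the $L^2$ convergence lets the quadratic constraints pass to the limit, $\ml{N}[\mf{u}_0]=N$ and $\ml{M}[\mf{u}_0]=M$, which fixes the ratio $|U|/|\Omega|$ at the value prescribed in Section~\ref{sec:TF} and hence places $\mf{u}_0\in\ms{A}_0$.

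I expect the principal difficulty to be the absence of an a priori $L^\infty$ bound: the target is the unbounded cone $\R_+^3$ and $W$ is only quartically coercive, so the passage from ``$W(\mf{u}_{\ep_j})\to 0$ a.e.'' to genuine a.e.\ convergence into the two-point well set, and the subsequent upgrade to $L^2$, must be carried out through coercivity and uniform integrability rather than by truncation. The vectorial chain-rule estimate for $\Phi\circ\mf{u}_\ep$ and the strict separation $g(\mf{a},\mf{b})>0$ are the remaining technical points; the quadratic constraints themselves enter only at the last step and pass to the limit harmlessly under $L^2$ convergence.
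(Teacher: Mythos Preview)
Your argument is correct and shares with the paper the same central device: the BV compactness of $\Phi\circ\mf u_\ep$ (your $\Phi$ is the paper's $\varphi_{\mf a}$, Lemma~\ref{lemma:4.2}), obtained from the Modica--Mortola estimate $\int|\na(\Phi\circ\mf u_\ep)|\le\tfrac12\ml G_\ep[\mf u_\ep]$. Where you diverge from the paper is in the two surrounding steps. To cope with the unbounded target, the paper truncates, setting $\mf v_\ep=\mf u_\ep\chi_{|\mf u_\ep|\le R_1}$ and using the quadratic lower bound \eqref{cond:W3} to show $\|\mf u_\ep-\mf v_\ep\|_{L^2}\to 0$; you instead exploit the quartic coercivity of the explicit $W$'s of Section~\ref{sec:TF} to get a uniform $L^4$ bound and then invoke Vitali. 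To identify the limit, the paper passes through Young measures on the truncated $L^\infty$ sequence, shows the generated measure is supported on $\{\mf a,\mf b\}$, and then uses the strong $L^1$ convergence of $\varphi_{\mf a}\circ\mf u_\ep$ to collapse it to a Dirac; you bypass Young measures entirely with the direct pointwise argument (coercivity forces cluster points into $\{\mf a,\mf b\}$, and the a.e.\ limit of $\Phi\circ\mf u_{\ep_j}$ selects which one). Your route is more elementary and self-contained; the paper's route has the advantage of needing only the abstract hypothesis \eqref{cond:W3} rather than the stronger quartic growth you read off from the explicit perfect-square formulas, so it would transfer unchanged to any $W$ satisfying the listed conditions.
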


\begin{thm}\label{thm:5.5}
 Suppose $\mathbf u_\epsilon$ are  minimizers of the variational problem
 $$ \inf_{\mathbf u\in \ms{A}} \ml G_\epsilon[\mathbf u]$$
 and $\{\mathbf u_{\epsilon_j}\}$ converges $\mathbf u_0\in\ms{A}_0$ in $(L^2(\Omega))^3$ for some subsequence $\epsilon_j\to0$. 
 Then $\mathbf u_0$ solves the variational problem
 $$ \inf_{\mathbf u\in \ms{A}_0
 }\ml G_0[\mathbf u] .$$
\end{thm}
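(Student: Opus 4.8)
The plan is to run the standard ``fundamental theorem of $\Gamma$-convergence'' argument, feeding in the two halves of Theorem~\ref{Thm:Gamma-limit}. Fix an arbitrary competitor $\mathbf v_0\in\ms{A}_0$; the goal is to prove $\ml G_0[\mathbf u_0]\le \ml G_0[\mathbf v_0]$. Since $\mathbf v_0$ ranges over all of $\ms{A}_0$, this inequality identifies $\mathbf u_0$ as a minimizer of $\ml G_0$ over $\ms{A}_0$, which is exactly the claim. If $\ml G_0[\mathbf v_0]=+\infty$ there is nothing to prove, so the content is concentrated in the finite-energy competitors.

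First I would invoke the recovery-sequence part of Theorem~\ref{Thm:Gamma-limit} to produce $\{\mathbf v_\epsilon\}\subset\ms{A}$ with $\mathbf v_\epsilon\to\mathbf v_0$ in $(L^2(\Omega))^3$ and $\ml G_\epsilon[\mathbf v_\epsilon]\to\ml G_0[\mathbf v_0]$. The point to stress is that each $\mathbf v_\epsilon$ lies in $\ms{A}$, hence satisfies both quadratic constraints $\ml N[\mathbf v_\epsilon]=N$ and $\ml M[\mathbf v_\epsilon]=M$; this admissibility is precisely what legitimizes comparing $\mathbf v_\epsilon$ against the minimizer $\mathbf u_\epsilon$ in the next step. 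I would then restrict to the subsequence $\epsilon_j$ along which $\mathbf u_{\epsilon_j}\to\mathbf u_0$. Because $\mathbf u_{\epsilon_j}$ minimizes $\ml G_{\epsilon_j}$ over $\ms{A}$ and $\mathbf v_{\epsilon_j}\in\ms{A}$, minimality gives $\ml G_{\epsilon_j}[\mathbf u_{\epsilon_j}]\le \ml G_{\epsilon_j}[\mathbf v_{\epsilon_j}]$ for every $j$.

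Taking $\liminf_{j\to\infty}$ and applying the lower-semicontinuity half of Theorem~\ref{Thm:Gamma-limit} (with $\mathbf u_{\epsilon_j}\to\mathbf u_0$) on the left, while using the convergence of the recovery energies on the right, yields
$$
\ml G_0[\mathbf u_0]\ \le\ \liminf_{j\to\infty}\ml G_{\epsilon_j}[\mathbf u_{\epsilon_j}]\ \le\ \liminf_{j\to\infty}\ml G_{\epsilon_j}[\mathbf v_{\epsilon_j}]\ =\ \ml G_0[\mathbf v_0].
$$
As $\mathbf v_0\in\ms{A}_0$ was arbitrary, $\mathbf u_0$ solves $\inf_{\mathbf u\in\ms{A}_0}\ml G_0[\mathbf u]$.

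I do not anticipate a genuine obstacle here, since Theorems~\ref{Thm:Gamma-limit} and~\ref{Thm:compactness} already carry all the analytic weight. The only point demanding care is that the recovery sequence honor the mass and magnetization constraints, so that the minimality comparison $\ml G_{\epsilon_j}[\mathbf u_{\epsilon_j}]\le\ml G_{\epsilon_j}[\mathbf v_{\epsilon_j}]$ is valid; this is exactly the ``modification for quadratic constraints'' flagged before the main theorems, and it is discharged inside Theorem~\ref{Thm:Gamma-limit}. I would also remark that Theorem~\ref{Thm:compactness} is what guarantees such a convergent subsequence $\mathbf u_{\epsilon_j}\to\mathbf u_0\in\ms{A}_0$ exists in the first place (one must first check the minimizers have uniformly bounded $\ml G_\epsilon$-energy, which follows by testing against any fixed recovery sequence), so that the hypothesis of this theorem is never vacuous.
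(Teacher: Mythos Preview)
Your proof is correct and follows essentially the same route as the paper: pick an arbitrary $\mathbf v_0\in\ms{A}_0$, invoke the recovery sequence from Theorem~\ref{Thm:Gamma-limit}, use minimality of $\mathbf u_{\epsilon_j}$ to compare, and apply the liminf inequality. The paper's version is terser and does not explicitly isolate the subsequence or the admissibility check, but the argument is the same.
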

\begin{proof}
 Let $\mathbf v_0\in \ms{A}_0$. There exists a sequence $\mathbf v_\epsilon\in \mathcal A$ such that
 $\ml G_0[\mathbf v_0]=\lim_{\epsilon\to 0} \ml G_\epsilon[\mathbf v_\epsilon] $. We have
 $$ \ml G_0[\mathbf v_0] = \lim_{\epsilon\to 0} \ml G_\epsilon[\mathbf v_\epsilon] \ge \liminf_{\epsilon\to0}\ml G_\epsilon[\mathbf u_\epsilon)]\ge \ml G_0[\mathbf u_0].$$
 This shows that $\mathbf u_0$ minimizes the functional $\ml G_0$.
\end{proof}

According to Theorem \ref{thm:5.5} and the discussion in sections \ref{case:NS-MS}, \ref{case:2C}, \ref{case:NS-2C}, \ref{case:MS-MS} and \ref{case:3C},  we conclude that the asymptotic behaviors of the ground states of the Spin-1 BEC systems are characterized by their corresponding Thomas-Fermi solutions satisfying the minimal interface criterion.

By the work of \cite{KS}, it is also possible to construct local minimizers of the perturbed variational problem from isolated local minimizers of the limiting one.
\begin{thm}
 Suppose $\mathbf u_0\in \ms{A}_0$ is an isolated $L^2$-local minimizer of the functional $\ml G_0$, i.e. there exists $\epsilon>0$ such that 
 $$ \ml G_0(\mathbf v) > \ml G_0(\mathbf u_0) \qquad\textrm{ for all } \mf{v}\in\ml{A}_0\textrm{ and } 0<\|\mathbf v - \mathbf u_0\|_{L^2}<\epsilon.$$
  Then there exists a sequence $\{\mathbf u_\epsilon\}\subset \ms{A}$ such that each $\mathbf u_\epsilon$ is a local minimizer of the functional $\ml G_\epsilon$ and $\mathbf u_\epsilon\to\mathbf u_0$ in $(L^2(\Omega))^3$ as $\epsilon\to0$.
\end{thm}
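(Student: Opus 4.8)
The plan is to adapt the classical Kohn--Sternberg construction \cite{KS} to the present setting, where the admissible classes $\ms{A}$ and $\ms{A}_0$ carry the two quadratic constraints $\ml{N}[\mf{u}]=N$ and $\ml{M}[\mf{u}]=M$. Fix the radius $\delta>0$ furnished by the hypothesis, so that $\ml G_0(\mf{v}) > \ml G_0(\mf{u}_0)$ for every $\mf{v}\in\ms{A}_0$ with $0<\|\mf{v}-\mf{u}_0\|_{L^2}\le\delta$. For each $\ep>0$ I would minimize $\ml G_\ep$ over the localized class $\ms{A}\cap\overline{B}_\delta(\mf{u}_0)$, where $\overline{B}_\delta(\mf{u}_0)$ is the closed $L^2$-ball of radius $\delta$ about $\mf{u}_0$. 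Existence of a minimizer $\mf{u}_\ep$ follows from the direct method: a minimizing sequence has uniformly bounded $\ml G_\ep$-energy, hence is bounded in $(H^1_0(\Omega))^3\cap(L^4(\Omega))^3$, since the gradient term controls the Dirichlet norm and the quartic growth of $W$ controls the $L^4$ norm. By Rellich's theorem a subsequence converges strongly in $(L^2(\Omega))^3$, and strong $L^2$-convergence preserves both the ball and the two quadratic constraints (for instance $\|u_j^2-v_j^2\|_{L^1}\le\|u_j-v_j\|_{L^2}\|u_j+v_j\|_{L^2}$). Weak lower semicontinuity of the Dirichlet term together with Fatou's lemma applied to $W\ge0$ then shows the limit is a minimizer in $\ms{A}\cap\overline{B}_\delta(\mf{u}_0)$.

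Next I would prove $\mf{u}_\ep\to\mf{u}_0$ in $(L^2(\Omega))^3$. Invoking the recovery sequence of Theorem \ref{Thm:Gamma-limit}, there is $\mf{v}_\ep\to\mf{u}_0$ with $\ml G_\ep(\mf{v}_\ep)\to\ml G_0(\mf{u}_0)$; for small $\ep$ the $\mf{v}_\ep$ lie inside the ball, so $\ml G_\ep(\mf{u}_\ep)\le\ml G_\ep(\mf{v}_\ep)$ stays uniformly bounded. Theorem \ref{Thm:compactness} then yields a subsequence $\mf{u}_{\ep_j}\to\mf{w}\in\ms{A}_0$ in $(L^2(\Omega))^3$ with $\|\mf{w}-\mf{u}_0\|_{L^2}\le\delta$. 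The lower semicontinuity half of Theorem \ref{Thm:Gamma-limit} gives $\ml G_0(\mf{w})\le\liminf_j\ml G_{\ep_j}(\mf{u}_{\ep_j})\le\ml G_0(\mf{u}_0)$, and the isolated-minimizer hypothesis forces $\mf{w}=\mf{u}_0$. Since every subsequence has a further subsequence with the same limit, the whole family converges to $\mf{u}_0$.

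Finally I would upgrade $\mf{u}_\ep$ from a constrained to a genuine local minimizer. Since $\mf{u}_\ep\to\mf{u}_0$, for all sufficiently small $\ep$ one has $\|\mf{u}_\ep-\mf{u}_0\|_{L^2}<\delta/2$, so $\mf{u}_\ep$ lies in the open ball $B_\delta(\mf{u}_0)$. Any competitor $\mf{u}\in\ms{A}$ with $\|\mf{u}-\mf{u}_\ep\|_{L^2}<\delta/2$ then also lies in $\overline{B}_\delta(\mf{u}_0)$, whence $\ml G_\ep(\mf{u})\ge\ml G_\ep(\mf{u}_\ep)$; thus $\mf{u}_\ep$ is an $L^2$-local minimizer of $\ml G_\ep$ on all of $\ms{A}$. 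The hard part will be the existence step for fixed $\ep$ under the two quadratic constraints combined with the ball localization, and in particular checking that these restrictions remain compatible along minimizing sequences. The step that genuinely needs the \emph{isolatedness} of $\mf{u}_0$ is the identification $\mf{w}=\mf{u}_0$: without it the degenerate mixed states noted after the Thomas--Fermi theorems could supply alternative $L^2$-limits in $\ms{A}_0$.
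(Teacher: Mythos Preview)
Your proposal is correct and is precisely the Kohn--Sternberg argument \cite{KS} that the paper itself invokes (without giving details) for this theorem: minimize $\ml G_\ep$ over the closed $L^2$-ball about $\mf{u}_0$ within $\ms{A}$, use the recovery sequence and compactness to force the minimizers to converge to $\mf{u}_0$, and then conclude via the triangle inequality that they lie in the interior of the ball and are therefore unconstrained local minimizers. Your explicit attention to preserving the two quadratic constraints along minimizing sequences and to the quartic growth of $W$ fills in exactly the adaptations the paper leaves implicit.
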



\begin{figure}[ht]
\begin{center}
\includegraphics[scale=0.5]{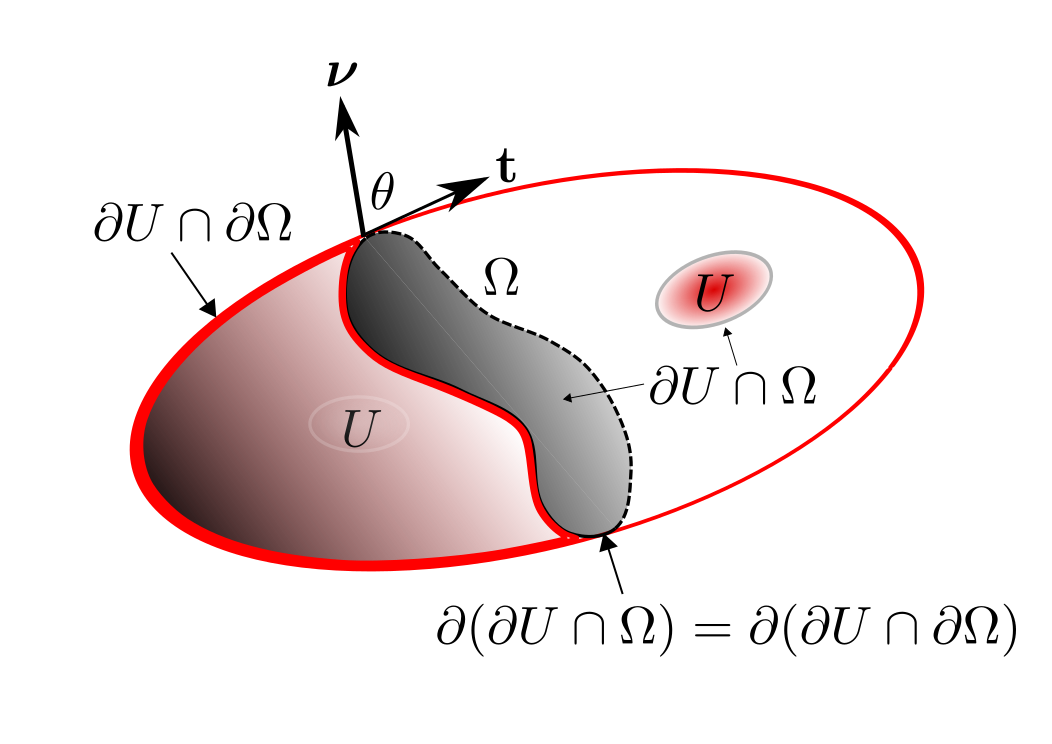}
\caption{The geometry of the domain $\Omega$ and the interface $\partial U\cap \Omega$}
\label{fig:3domain}
\end{center}
\end{figure}

 The existence of a minimizer for the limiting problem
 $$\inf_{\mathbf u\in \ms{A}_0} \ml G_0[\mathbf u]$$
 is obtained through the standard direct method in the calculus of variations.  By the straight forward calculation of the first variation, we get  the following necessary condition  for the interface. 

 \begin{thm}
  Let $\Omega$ be a bounded domain with a $C^2$-boundary $\partial\Omega$ and $\mathbf u_0$ be a critical point of $\ml G_0$ such that $\partial U\cap\Omega$ is of class $\mathcal C^2$ with mean curvature $H:\partial U\cap\Omega\to \mathbb R$. 
  The corresponding Euler-Lagrange equation of $\min \ml{G}_0[\mf{u}]$ is
  \begin{equation*}
   \left\{\begin{array}{ll}
     H(x) = \textrm{Const.} &\qquad \textrm{for }x\in \partial U\cap\Omega,\\
    g(\mathbf a,\mathbf b)(\boldsymbol\nu\cdot \mathbf t) + g(\mathbf 0,\mathbf a) - g(\mathbf 0,\mathbf b) = 0 & \qquad\textrm{for }x\in \partial (\partial U\cap\Omega)
   \end{array}\right.
  \end{equation*}
   where  $\boldsymbol \nu:\partial (\partial U\cap\Omega)\to \mathbb S^{n-1}$ is an outward unit tangential vector to the interface $\partial U\cap\Omega$ and normal to $\partial (\partial U\cap\Omega)$; $\mathbf t:\partial(\partial U\cap \Omega)\to\mathbb S^{n-1}$ is the outward unit tangential vector to $\partial U\cap \partial\Omega$ and normal to $\partial(\partial U\cap\partial \Omega)$. 
 \end{thm}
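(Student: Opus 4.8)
The functional $\ml G_0$ restricted to $\ms A_0$ is a weighted perimeter (capillary) energy. Writing $\Sigma:=\partial U\cap\Omega$ for the interior interface, $S:=\partial U\cap\partial\Omega$ for the part of the boundary wetted by the state $\mathbf a$, and $C:=\partial(\partial U\cap\Omega)=\partial S$ for the contact line, and using that $\mathcal H^2(\partial\Omega)$ is fixed so that $\mathcal H^2(\{\mathbf u=\mathbf b\}\cap\partial\Omega)=\mathcal H^2(\partial\Omega)-\mathcal H^2(S)$, I would discard an additive constant and rewrite
\[
\ml G_0[\mathbf u]=2g(\mathbf a,\mathbf b)\,\mathcal H^2(\Sigma)+2\bigl(g(\mathbf 0,\mathbf a)-g(\mathbf 0,\mathbf b)\bigr)\,\mathcal H^2(S)+\textrm{const}.
\]
The plan is to compute the first variation of this functional along volume-preserving deformations of $\Omega$ that keep $\partial\Omega$ fixed, and to read off the two Euler-Lagrange conditions.

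To that end, let $X$ be a smooth vector field on $\ol\Omega$ tangent to $\partial\Omega$ (so its flow $\phi_t$ maps $\ol\Omega$ and $\partial\Omega$ to themselves), and set $U_t:=\phi_t(U)$, $\Sigma_t:=\phi_t(\Sigma)$, $S_t:=\phi_t(S)$. Since the mass and magnetization constraints fix the volume fraction $r$, hence $|U|$, only deformations with $\frac{d}{dt}|_{t=0}|U_t|=\int_\Sigma X\cdot\mathbf n\,d\mathcal H^2=0$ are admissible, which I handle with a Lagrange multiplier $\lambda$; here $\mathbf n$ is the unit normal to $\Sigma$ pointing out of $U$. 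The classical first variation of area for a $C^2$ surface with boundary gives (with $H$ the mean curvature of $\Sigma$ in the appropriate sign convention and $\boldsymbol\nu$ the outward conormal of $\Sigma$ along $C$)
\[
\frac{d}{dt}\Big|_{t=0}\mathcal H^2(\Sigma_t)=\int_\Sigma H\,(X\cdot\mathbf n)\,d\mathcal H^2+\int_{C}X\cdot\boldsymbol\nu\,ds,
\]
while $S_t\subset\partial\Omega$ changes area only through the motion of its boundary $C$, so
\[
\frac{d}{dt}\Big|_{t=0}\mathcal H^2(S_t)=\int_{C}X\cdot\mathbf t\,ds,
\]
with $\mathbf t$ the outward conormal of $S$ along $C$ (tangent to $\partial\Omega$, normal to $C$, pointing away from $U$) — exactly the vectors named in the statement.

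Combining these, the constrained first variation is
\[
\delta\ml G_0[\mathbf u](X)=\int_\Sigma\bigl(2g(\mathbf a,\mathbf b)H-\lambda\bigr)(X\cdot\mathbf n)\,d\mathcal H^2+\int_{C}\bigl(2g(\mathbf a,\mathbf b)\,(X\cdot\boldsymbol\nu)+2(g(\mathbf 0,\mathbf a)-g(\mathbf 0,\mathbf b))\,(X\cdot\mathbf t)\bigr)\,ds.
\]
Testing first with fields supported compactly in $\Omega$ (so the boundary integral drops and the volume constraint is carried by $\lambda$) forces $2g(\mathbf a,\mathbf b)H=\lambda$ on $\Sigma$, i.e. $H\equiv\textrm{Const.}$ For the contact-line condition I use tangency: if $\mathbf N$ is the outward unit normal of $\partial\Omega$, then $X\cdot\mathbf N=0$ on $C$, and since $\{\boldsymbol\nu,\mathbf n\}$ and $\{\mathbf t,\mathbf N\}$ are both orthonormal bases of the plane normal to $C$, the decomposition $\boldsymbol\nu=(\boldsymbol\nu\cdot\mathbf t)\mathbf t+(\boldsymbol\nu\cdot\mathbf N)\mathbf N$ yields $X\cdot\boldsymbol\nu=(\boldsymbol\nu\cdot\mathbf t)\,(X\cdot\mathbf t)$. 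The boundary integrand thus becomes $2\bigl(g(\mathbf a,\mathbf b)(\boldsymbol\nu\cdot\mathbf t)+g(\mathbf 0,\mathbf a)-g(\mathbf 0,\mathbf b)\bigr)(X\cdot\mathbf t)$, and since $X\cdot\mathbf t$ is arbitrary along $C$, the fundamental lemma of the calculus of variations gives Young's relation
\[
g(\mathbf a,\mathbf b)(\boldsymbol\nu\cdot\mathbf t)+g(\mathbf 0,\mathbf a)-g(\mathbf 0,\mathbf b)=0\qquad\text{on }C.
\]

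The step I expect to be most delicate is the bookkeeping at the contact line: one must check that the deformation field is genuinely admissible (smooth, tangent to $\partial\Omega$, and rich enough to vary $X\cdot\mathbf n$ on $\Sigma$ and $X\cdot\mathbf t$ on $C$ independently while respecting the single scalar volume constraint through $\lambda$), and one must fix the orientations of $\boldsymbol\nu,\mathbf t,\mathbf n,\mathbf N$ so that $X\cdot\boldsymbol\nu=(\boldsymbol\nu\cdot\mathbf t)(X\cdot\mathbf t)$ carries the correct sign. The $C^2$-regularity hypotheses on $\partial\Omega$ and on $\Sigma=\partial U\cap\Omega$ are precisely what make the two first-variation formulas valid in the classical sense used above.
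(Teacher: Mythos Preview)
Your argument is correct and is exactly the standard capillary first-variation computation that the paper has in mind; the paper itself omits the proof entirely, writing only that ``The calculation [of] the first variation of $\ml{G}_0$ under the fixed volume constraint is similar to \cite{CS,Shieh}. We omit it here.'' Your rewriting of $\ml G_0$ as $2g(\mathbf a,\mathbf b)\mathcal H^2(\Sigma)+2(g(\mathbf 0,\mathbf a)-g(\mathbf 0,\mathbf b))\mathcal H^2(S)+\textrm{const.}$, the use of the first-variation-of-area formula with boundary, and the tangency identity $X\cdot\boldsymbol\nu=(\boldsymbol\nu\cdot\mathbf t)(X\cdot\mathbf t)$ on the contact line are precisely the ingredients of the Choksi--Sternberg computation the paper defers to.
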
 
 
\begin{proof}
The calculation the  first variation of $\ml{G}_0$ under the fixed volume constraint is similar to  \cite{CS,Shieh}. We omit it here.
\end{proof} 

\begin{remark}
The second equation of the Euler-Lagrange equation is called the Young's relation, which appears in natural process of wetting: 
$$ g(\mathbf a,\mathbf b)\cos\theta + g(\mathbf 0,\mathbf a) - g(\mathbf 0,\mathbf b) = 0.$$
Here, the contact angle between the boundary and the interface is denoted by $\theta$, see Figure \ref{fig:3domain}.
Indeed, it is a balance law between surface tensions of three different interfaces on the boundary.  Thus, the Euler-Lagrange equation mentioned above could also be considered as a equation for a quantum-like wetting process.
\end{remark}


\subsection{Preliminary lemmas}
The function $W:\mathbb R^3\to\mathbb R$  constructed in those sections has  the following properties:
\begin{enumerate}
 \item $W$ is a $C^1$-nonnegative function with the  following symmetry property:
\be
W(u_1,u_0,u_{-1})=W(-u_1,u_0,u_{-1})=W(u_1,-u_0,u_{-1})=W(u_1,u_0,-u_{-1}) \label{cond:W1}
\ee
 and
 $$ W(\mathbf u)=0  \textrm{ in } \mathbb R^3_+  \ \textrm{ if and only if } \mathbf u=\{\mathbf a, \mathbf b\} .$$
 \item There exist $\delta>0$ and $C>0$ such that
 \be C|\mathbf u-\mathbf a|^2 \le W(\mathbf u) \le \frac{1}{C} |\mathbf u-\mathbf a|^2 \quad\textrm{ for }|\mathbf u-\mathbf a|<\delta \label{cond:W2.1}
 \ee
 and
 \be
 C|\mathbf u-\mathbf b|^2 \le W(\mathbf u) \le \frac{1}{C} |\mathbf u-\mathbf b|^2 \quad \textrm{ for }|\mathbf u-\mathbf b|<\delta.\label{cond:W2.2}
 \ee
 \item There exist two positive values $C$ and $R$ such that
  \be  C|\mathbf u|^2 \le W(\mathbf u)\qquad\textrm{for }|\mathbf u|>R .\label{cond:W3}
  \ee
\end{enumerate}
We shall assume these properties of $W$ in the discussion below.

We quote  several useful lemmas from \cite{Baldo,M,S,S1}   which will be used in the proof of our $\Gamma$-convergence result.

\begin{lem}[See \cite{M,S}]\label{lemma:4.1}
 Let $\Omega$ be an open bounded subset in $\mathbb R^n$ with Lipschitz-continuous boundary. Let $A$ be an open subset in $\mathbb R^n$ with compact  $C^2$-boundary $\partial A$ such that $H^{n-1}(\partial A\cap \partial \Omega)=0$.
 Define the signed distance function to $\partial A$, $d:\Omega\to \mathbb R$, by
\be \label{eq:dA}
 d_A(x)=\left\{ \begin{array}{rl}
            \textrm{dist}(x,\partial A) & x\in \Omega\backslash A,\\
            -\textrm{dist}(x,\partial A) & x\in A\cap \Omega.
          \end{array}  \right.
 \ee
 Then for some $s>0$, $d$ is $C^2$ function in $\{|d(x)|<s\}$ with
 $$ |\nabla d_A| = 1.$$
 Furthermore,
 \be  \lim_{s\to 0 } \mathcal H^{n-1}(\{ x\in\Omega:\,d_A(x)=s \})= \mathcal H^{n-1}(\partial A\cap\Omega). \label{eq:measure}\ee
\end{lem}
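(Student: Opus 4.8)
The statement is classical, and the plan is to establish it in two stages: first the local $C^2$-regularity of $d_A$ together with the eikonal identity $|\nabla d_A|=1$, and then the convergence of the level-set measures \eqref{eq:measure}.

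For the regularity, the plan is to invoke the tubular neighborhood theorem for the compact hypersurface $\partial A$. Since $\partial A$ is compact and of class $C^2$, there is an $s>0$ so that the normal map $\Phi(y,t)=y+t\,\nu(y)$, with $\nu$ the unit normal field along $\partial A$, is a $C^1$-diffeomorphism of $\partial A\times(-s,s)$ onto the open tube $T_s:=\{x:\textrm{dist}(x,\partial A)<s\}$. On $T_s$ every point $x$ has a unique nearest-point projection $\pi(x)\in\partial A$, and one has $d_A(x)=\langle x-\pi(x),\nu(\pi(x))\rangle$ with $\nabla d_A(x)=\nu(\pi(x))$. Because $\partial A$ is $C^2$, the maps $\nu$ and $\pi$ are $C^1$, so $\nabla d_A$ is $C^1$ and hence $d_A\in C^2(T_s)$; this is the standard inheritance of regularity by the signed distance function to a $C^2$ hypersurface. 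The identity $\nabla d_A=\nu\circ\pi$ immediately gives $|\nabla d_A|\equiv 1$, reflecting that the level sets $\{d_A=t\}$ are mutually parallel surfaces.

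For the measure convergence, the plan is to use these normal coordinates. For $0<t<s$ put $\Phi_t(y):=y+t\,\nu(y)$, so that $\{d_A=t\}=\Phi_t(\partial A)$, and the Jacobian of $\Phi_t$ as a map between $(n-1)$-dimensional surfaces is the classical product over the principal curvatures $\kappa_1,\dots,\kappa_{n-1}$ of $\partial A$, namely $J\Phi_t(y)=\prod_{i=1}^{n-1}(1+t\,\kappa_i(y))$. By the area formula,
\[
\mathcal H^{n-1}\bigl(\{d_A=t\}\cap\Omega\bigr)=\int_{\{y\in\partial A:\,\Phi_t(y)\in\Omega\}}\prod_{i=1}^{n-1}\bigl(1+t\,\kappa_i(y)\bigr)\,d\mathcal H^{n-1}(y).
\]
Since $\partial A$ is compact and $C^2$, the curvatures $\kappa_i$ are bounded, so the integrand converges uniformly to $1$ as $t\to 0$ and is dominated by a constant; granting that the indicator of $\{y:\Phi_t(y)\in\Omega\}$ converges $\mathcal H^{n-1}$-a.e. to the indicator of $\partial A\cap\Omega$, dominated convergence then yields \eqref{eq:measure}.

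I expect the main obstacle to be exactly this last a.e.-convergence of the domain indicators, and this is where the hypothesis $\mathcal H^{n-1}(\partial A\cap\partial\Omega)=0$ is needed. The plan is to split $\partial A$ into the points interior to $\Omega$, the points exterior to $\overline\Omega$, and the points on $\partial\Omega$. If $y$ is interior to $\Omega$ then $\Phi_t(y)\in\Omega$ for all small $t$, and if $y$ is exterior to $\overline\Omega$ then $\Phi_t(y)\notin\Omega$ for all small $t$; in both cases the indicators converge to the correct value by an elementary openness argument. The only ambiguous points are those in $\partial A\cap\partial\Omega$, and the hypothesis declares this set $\mathcal H^{n-1}$-null, so it contributes nothing in the limit. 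Assembling the three pieces and passing $t\to 0$ completes both assertions of the lemma.
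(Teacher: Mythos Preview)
Your argument is correct and is the standard proof one finds in the cited references. Note, however, that the paper does not actually supply its own proof of this lemma: it is stated with the attribution ``See \cite{M,S}'' and no argument is given in the text. So there is nothing to compare against in the paper itself; your tubular-neighborhood construction together with the area formula and the dominated-convergence handling of the indicator $\mathbf{1}_{\{\Phi_t(y)\in\Omega\}}$ is exactly the classical route, and your identification of the role of the hypothesis $\mathcal H^{n-1}(\partial A\cap\partial\Omega)=0$ is right.
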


\begin{lem}[See \cite{Baldo,S,S1}]\label{lemma:4.2}
For any $\boldsymbol \xi_0,\boldsymbol\xi_1\in \mathbb R^3_+$, there exists a curve $\boldsymbol\gamma_{\boldsymbol\xi_0,\boldsymbol\xi_1}:[0,1]\to\mathbb R^3_+$ with $\boldsymbol\gamma(0)=\boldsymbol\xi_0$ and $\boldsymbol\gamma(1)=\boldsymbol\xi_1$ which minimizes the degenerate geodesic problem
$$ g(\boldsymbol\xi_0,\boldsymbol\xi_1):=\inf
     \left\{ \int_0^1 \sqrt{W(\boldsymbol\gamma(\tau))}\,|\boldsymbol\gamma'(\tau)|\,d\tau:\, \boldsymbol \gamma \in Lip([0,1];\mathbf R^3_+),\, \boldsymbol \gamma(0)=\boldsymbol\xi_0,\,\boldsymbol\gamma(1)=\boldsymbol\xi_1 \right\}.$$
 Define $\varphi_{\boldsymbol \xi_0}(\boldsymbol\xi_1)=g(\boldsymbol\xi_0,\boldsymbol\xi_1)$.
 Then the  function
 $\varphi_{\boldsymbol\xi_0}:\mathbb R^3_+\to\mathbb R_+$ is Lipchitz-continuous with the property
 $$|\nabla \varphi_{\boldsymbol\xi_0}(\boldsymbol\xi_1)|=\sqrt{W(\boldsymbol\xi_1)} \quad \textrm{ for almost all } \boldsymbol\xi_1\in \mathbb R^3_+. $$
 Furthermore, if $\mathbf u\in H^1(\Omega;\mathbb R^3_+)\cap L^\infty(\Omega;\mathbb R^3_+)$,  then $\varphi_{\boldsymbol\xi_0}\circ\mathbf u\in W^{1,1}(\Omega;\mathbb R^3_+)$ and
 \begin{equation}
 \int_\Omega |\nabla(\varphi_{\boldsymbol\xi_0}\circ\mathbf u)|\,dx \le \int_\Omega \sqrt{W(\mathbf u(x))}|\nabla \mathbf u(x)|\,dx, \label{eq:4.6}
 \end{equation}
 where $|\nabla\mathbf u|$ is the $2$-norm of  $\nabla\mathbf u$, that is
 $$ |\nabla \mathbf u| = \left( \sum_{i,j=1}^3 \left(\frac{\partial u_j}{\partial x_i}\right)^2\right)^{1/2}.$$
\end{lem}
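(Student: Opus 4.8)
The plan is to view $\varphi_{\boldsymbol\xi_0}$ as the geodesic distance to the point $\boldsymbol\xi_0$ in the degenerate conformal metric $\sqrt{W}\,|\cdot|$ on $\mathbb{R}^3_+$, and to establish, in order, existence of minimizing geodesics, Lipschitz regularity of $\varphi_{\boldsymbol\xi_0}$, the eikonal identity $|\nabla\varphi_{\boldsymbol\xi_0}|=\sqrt{W}$, and finally the composition bound \eqref{eq:4.6}.

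\emph{Existence of a minimizer} would be obtained by the direct method. Since the integrand $\sqrt{W(\boldsymbol\gamma)}\,|\boldsymbol\gamma'|$ is invariant under reparametrization, I normalize a minimizing sequence to constant Euclidean speed; the growth condition \eqref{cond:W3} confines every near-minimizer to a fixed bounded set, and the quadratic vanishing \eqref{cond:W2.1}--\eqref{cond:W2.2} of $W$ at $\mathbf a,\mathbf b$ guarantees that the induced distance $g$ stays topologically equivalent to the Euclidean one, so that no compactness is lost at the degenerate points. After extracting a uniformly convergent subsequence by Arzel\`a--Ascoli, lower semicontinuity of the weighted length under uniform convergence yields a minimizing curve, so $g(\boldsymbol\xi_0,\boldsymbol\xi_1)$ is attained. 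Equivalently, $(\mathbb{R}^3_+,g)$ is a proper, locally compact length space and one may invoke a Hopf--Rinow argument, which is the cleaner way to dispense with the degeneracy.

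\emph{Lipschitz continuity and the eikonal identity.} The triangle inequality $|\varphi_{\boldsymbol\xi_0}(\boldsymbol\xi_1)-\varphi_{\boldsymbol\xi_0}(\boldsymbol\xi_1')|\le g(\boldsymbol\xi_1,\boldsymbol\xi_1')$, together with the straight-segment competitor $g(\boldsymbol\xi_1,\boldsymbol\xi_1')\le|\boldsymbol\xi_1-\boldsymbol\xi_1'|\,\sup_{[\boldsymbol\xi_1,\boldsymbol\xi_1']}\sqrt{W}$, makes $\varphi_{\boldsymbol\xi_0}$ locally Lipschitz, hence differentiable almost everywhere by Rademacher's theorem. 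At a point of differentiability the same segment competitor, as the increment shrinks, gives the upper bound $|\nabla\varphi_{\boldsymbol\xi_0}(\boldsymbol\xi_1)|\le\sqrt{W(\boldsymbol\xi_1)}$ by continuity of $W$. For the matching lower bound I would use the dynamic-programming (optimality) principle: along a minimizing geodesic $\boldsymbol\gamma$ terminating at $\boldsymbol\xi_1$ the value grows at the exact rate, $\frac{d}{dt}\varphi_{\boldsymbol\xi_0}(\boldsymbol\gamma(t))=\sqrt{W(\boldsymbol\gamma)}\,|\boldsymbol\gamma'|$, while the chain rule bounds this derivative by $|\nabla\varphi_{\boldsymbol\xi_0}|\,|\boldsymbol\gamma'|$; since through almost every point of $\mathbb{R}^3_+$ one can run such a geodesic issuing from $\boldsymbol\xi_0$, the equality $|\nabla\varphi_{\boldsymbol\xi_0}|=\sqrt{W}$ holds a.e.

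\emph{The composition estimate and the main obstacle.} With $\varphi_{\boldsymbol\xi_0}$ Lipschitz and $\mathbf u\in H^1(\Omega;\mathbb{R}^3_+)\cap L^\infty$, I would invoke the Stampacchia chain rule (Marcus--Mizel) to obtain $\varphi_{\boldsymbol\xi_0}\circ\mathbf u\in W^{1,1}(\Omega)$ with $\nabla(\varphi_{\boldsymbol\xi_0}\circ\mathbf u)=\sum_j(\partial_j\varphi_{\boldsymbol\xi_0})(\mathbf u)\,\nabla u_j$ a.e.; Cauchy--Schwarz then gives $|\nabla(\varphi_{\boldsymbol\xi_0}\circ\mathbf u)|\le|\nabla\varphi_{\boldsymbol\xi_0}(\mathbf u)|\,|\nabla\mathbf u|\le\sqrt{W(\mathbf u)}\,|\nabla\mathbf u|$, and integration yields \eqref{eq:4.6}. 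The two delicate points are the lower bound of the eikonal identity, where the degeneracy of $W$ at $\mathbf a,\mathbf b$ must not let geodesics stall, and the chain rule for the merely Lipschitz $\varphi_{\boldsymbol\xi_0}$: its gradient is defined only off a Lebesgue-null set $N\subset\mathbb{R}^3$, so one must use that $\nabla\mathbf u=0$ a.e.\ on $\mathbf u^{-1}(N)$ to render the formula meaningful. I regard this chain-rule justification, rather than the geometric estimates, as the main obstacle; note, however, that inequality \eqref{eq:4.6} alone needs only the eikonal upper bound, which is elementary, so the genuinely technical content is the a.e.\ equality.
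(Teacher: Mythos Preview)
Your proposal is correct and its overall architecture---existence of a minimizing geodesic, the eikonal identity via the triangle inequality plus optimality along geodesics, and the chain-rule composition bound---matches the paper's. The existence step, however, is handled by a genuinely different route in the paper, and it is worth seeing why.

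The paper does not attack the degenerate length functional directly. Instead it regularizes to $\sqrt{W+\delta}$, obtains a minimizer $\boldsymbol\gamma_\delta$ for the resulting nondegenerate conformal Riemannian metric by the ordinary direct method, derives a uniform bound on the Euclidean arclength of $\boldsymbol\gamma_\delta$ by analyzing separately the region near the zeros of $W$ and the region away from them, reparametrizes to constant speed, and then passes to a uniform limit via Arzel\`a--Ascoli as $\delta\to 0$. Your direct approach (or the Hopf--Rinow shortcut) is cleaner conceptually, but the regularization buys you the Euclidean arclength bound for free, whereas in your scheme you must argue carefully that the merely quadratic vanishing of $W$ at $\mathbf a,\mathbf b$ still prevents near-minimizers from winding unboundedly near the wells. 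The paper also treats the constraint $\boldsymbol\gamma([0,1])\subset\mathbb R^3_+$ explicitly: it uses the symmetry \eqref{cond:W1} to extend the problem to all of $\mathbb R^3$, minimizes there, and reflects componentwise back into the orthant; you work directly in the closed orthant, which is legitimate for compactness but skips a point the paper addresses.

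The remaining steps coincide: the paper gets the upper eikonal bound from the triangle inequality and the lower bound by moving $\boldsymbol\xi_1'$ along the geodesic toward $\boldsymbol\xi_1$, exactly your dynamic-programming argument. For \eqref{eq:4.6} the paper proves the inequality first for $\mathbf u\in C^1$ and extends by density, while you invoke the Marcus--Mizel chain rule directly; both are standard. Your remark that only the upper eikonal bound $|\nabla\varphi_{\boldsymbol\xi_0}|\le\sqrt{W}$ is actually needed for \eqref{eq:4.6} is correct and a useful clarification.
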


\begin{proof}
\bn
\I This geodesic problem corresponds to a Riemannian metric $ds^{2} = W ds_{Euc}^{2}$ on $\R_{+}^{3}$, where $ds_{Euc}$ is the Euclidean matric.     However, there are two difficulties here. The first one is that this is a constrained variational problem, namely all components of the geodesic curve $\ms{\gamma}(\cdot)$ should be non-negative.
To resolve this difficulty, we extend this variational problem to the {\em entire} $\mathbb R^3$ by taking the advantage of the symmetry property of the function $W$.  Thus we  consider an equivalent degenerate geodesic problem on  $\mathbb R^3$
\begin{equation}\label{eq:gcurve0}
\inf
     \left\{ \int_0^1 \sqrt{W(\boldsymbol\gamma(\tau))}\,|\boldsymbol\gamma'(\tau)|\,d\tau:\, \boldsymbol \gamma \in Lip([0,1];\mathbb R^3),\, \boldsymbol \gamma(0)=\boldsymbol\xi_0,\,\boldsymbol\gamma(1)=\boldsymbol\xi_1 \right\},
\end{equation}
For any $\boldsymbol \xi_0,\boldsymbol\xi_1\in \mathbb R^3_+$, if we have found a Lipschitz geodesic  $\ms{\gamma}(\cdot)$ connecting $\bs{\xi}_{0}$ to $\bs{\xi}_{1}$ in $\R^{3}$, then using the symmetry property of $W$ and reflection, we can always find a representative Lipschitz-continuous curve $\boldsymbol\gamma:[0,1]\to\mathbb R^3_+$ which solves the constrained geodesic problem. 
\I The second difficuly is the degeneracy of $W(u)$ at $\mf{a}$ and $\mf{b}$.
Thus, the direct method in the calculus of variations cannot be applied straightforwardly.  Therefore, we consider the regularized problem:
\begin{equation}\label{eq:gcurve1}
 \inf
     \left\{ \int_0^1 \left(\sqrt{W(\boldsymbol\gamma(\tau))+ \delta}  \right)\,|\boldsymbol\gamma'(\tau)|\,d\tau:\, \boldsymbol \gamma \in Lip([0,1];\mathbb R^3),\, \boldsymbol \gamma(0)=\boldsymbol\xi_0,\,\boldsymbol\gamma(1)=\boldsymbol\xi_1 \right\},
\end{equation}     
 where $\delta > 0$.   The regularized Riemannian metric $ds_{\delta}^{2} = (W+\delta)ds_{Euc}^{2}$ 
 is conformal to the Euclidean metric on the plane.
 Its minimizer $\boldsymbol{\gamma}_\delta$ exists uniquely by the direct method in the calculus of variations.
A uniform bound on the Euclidean arclength $\ell_\delta$ of  $\boldsymbol\gamma_\delta$ can be derived by  carefully analyzing the curve $\boldsymbol\gamma_\delta$ in two different regions: one region is away from the zeros of $W$ and the other is the region near the zeros of $W$. Because the value $\ell_\delta$  is invariant under re-parametrization of the curve $\boldsymbol\gamma_\delta$, this allows us to choose a new parametrization $\tilde{\boldsymbol\gamma}_\delta:[0,1]\to\mathbb R^3$ with a constant speed, that is $|\tilde{\boldsymbol\gamma}_\delta'|\equiv \ell_\delta$.
  By the Arzel\'a-Ascoli compactness theorem, there exists a subsequence  $\{\tilde{\boldsymbol\gamma}_{\delta_k}\}$ and a curve $\boldsymbol\gamma$ such that 
 $\tilde{\boldsymbol\gamma}_{\delta_k}$ converges to $\boldsymbol\gamma$ in $C([0,1],\R^3)$. It is easy to see that  $\boldsymbol\gamma$ solves the problem \eqref{eq:gcurve0}.
\I
It is observed that $g:\R^3_{+}\times\R^3_{+}\to\R$ is a metric in $\R^3_{+}$ and  satisfies the triangle inequality:
$$ g(\boldsymbol\xi_0,\boldsymbol\xi_1)\le g(\boldsymbol\xi_0,\boldsymbol\xi_1')+ g(\boldsymbol\xi_1',\boldsymbol\xi_1).$$
That is,
\be
\label{eq:geoineq}
 \varphi_{\boldsymbol\xi_0}(\boldsymbol\xi_1)-\varphi_{\boldsymbol\xi_0}(\boldsymbol\xi_1')\le g(\boldsymbol\xi_1,\boldsymbol\xi_1')= \int_0^1\sqrt{W(\boldsymbol\gamma_{\boldsymbol\xi_1',\boldsymbol\xi_1}(\tau))}|\boldsymbol\gamma_{\boldsymbol\xi_1',\boldsymbol\xi_1}'(\tau)|\,d\tau.
 \ee
From this, we get that  $\varphi_{\boldsymbol\xi_0}$ is locally Lipchitz continuous and
\be \label{eq:geoineq2}
  |\nabla\varphi_{\boldsymbol\xi_0}(\boldsymbol\xi_1)| \le \sqrt{W(\boldsymbol\xi_1)}
 \ee
 As we choose $\boldsymbol\xi_1'$ moving along the geodesic $\boldsymbol\gamma_{\boldsymbol\xi_0,\boldsymbol\xi_1}$ to approach
$\boldsymbol\xi_1$, then the inequality in (\ref{eq:geoineq}) becomes equality, and we get
\be \label{eq:geoeq2}
 |\nabla\varphi_{\boldsymbol\xi_0}(\boldsymbol\xi_1)| = \sqrt{W(\boldsymbol\xi_1)}.
 \ee
 Notice that the above arguments hold for all $\bs{\xi}_{1}$ in the interior of $\R_{+}^{3}$.  But the inequality (\ref{eq:geoineq2}) and equality (\ref{eq:geoeq2}) can be extended to the boundary of $\R_{+}^{3}$.
 \I  Applying the Cauchy inequality and from (\ref{eq:geoineq2}), we get
 $$
 \aligned
   \int_\Omega |\nabla(\varphi_{\boldsymbol\xi_0}\circ\mathbf u)|\,dx
    =& \int_\Omega  \left| \left(\nabla \varphi_{\boldsymbol\xi_0}(\mathbf u) \right)\nabla_x\mathbf u\right|\,dx\\
    \le& \int_\Omega  \left| \nabla \varphi_{\boldsymbol\xi_0}(\mathbf u) \right|\left|\nabla_x\mathbf u\right|\,dx
 \le& \int_\Omega \sqrt{W(\mathbf u(x))}|\nabla \mathbf u(x)|\,dx
 \endaligned
 $$
  for $\mathbf u\in C^1(\Omega;\mathbb R^3_+)$.  By the density theorem, this  inequality  also holds for $\mathbf u\in H^1(\Omega;\mathbb R_+^3)\cap L^\infty(\Omega;\mathbb R_+^3)$.
 \en
\end{proof}

The following lemma is used to construct the one-dimensional profile of the internal layer.
\begin{lem}[See \cite{S,S1}]\label{lemma:4.3}
 Given $\mf{a},\mf{b} \in \R_+^3$. Then there exists a  Lipchitz-continuous function $\boldsymbol \eta:(-\infty,\infty)\to\mathbb R^3_+$  whose trajectory is the geodesic with the metric $ds^{2} = Wds^{2}_{Euc}$  connecting $\mf{a}$ to $\mf{b}$. The function $\bs{\eta}(\cdot)$ solves the variational problem
$$
 \inf_{\substack{ \boldsymbol \eta(-\infty)=\mathbf a\\ \boldsymbol\eta(+\infty)=\mathbf  b}}
   \int_{-\infty}^\infty |\boldsymbol\eta'|^2 + W(\boldsymbol\eta)\,ds
 $$
with minimal value $2g(\mathbf a,\mathbf b)$ and
\be |\boldsymbol\eta(t)-\mathbf a|<Ce^{c_1t}\qquad\textrm{as }t\to-\infty,\label{eq:ex-estimate1}\ee
\be |\boldsymbol\eta(t)-\mathbf b|<Ce^{-c_1t}\qquad\textrm{as }t\to+\infty.\label{eq:ex-estimate2}\ee
\end{lem}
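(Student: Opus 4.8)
The plan is to reduce the one-dimensional profile problem to the degenerate geodesic problem of Lemma~\ref{lemma:4.2} by exploiting the elementary inequality $|\boldsymbol\eta'|^2 + W(\boldsymbol\eta)\ge 2\sqrt{W(\boldsymbol\eta)}\,|\boldsymbol\eta'|$, which is saturated exactly when the \emph{equipartition} identity $|\boldsymbol\eta'|^2 = W(\boldsymbol\eta)$ holds. First I would establish the lower bound: for any admissible $\boldsymbol\eta$,
$$ \int_{-\infty}^\infty |\boldsymbol\eta'|^2 + W(\boldsymbol\eta)\,ds \ge 2\int_{-\infty}^\infty \sqrt{W(\boldsymbol\eta)}\,|\boldsymbol\eta'|\,ds \ge 2g(\mathbf a,\mathbf b), $$
where the second inequality holds because the middle integral is the length of $\boldsymbol\eta$ in the degenerate metric $ds^2 = W\,ds_{Euc}^2$, hence no smaller than the geodesic distance $g(\mathbf a,\mathbf b)$ between its endpoints. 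This already shows that $2g(\mathbf a,\mathbf b)$ is a lower bound for the infimum, with equality forcing equipartition a.e.

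Next I would construct a minimizer attaining this value. Take the minimizing geodesic $\boldsymbol\gamma = \boldsymbol\gamma_{\mathbf a,\mathbf b}:[0,1]\to\mathbb R^3_+$ supplied by Lemma~\ref{lemma:4.2}, which may be taken to avoid the zeros $\mathbf a,\mathbf b$ of $W$ in its interior so that $W(\boldsymbol\gamma)>0$ on $(0,1)$. I would then reparametrize it to force equipartition: define a new time variable by
$$ t(\tau) = \int_{\tau_*}^{\tau} \frac{|\boldsymbol\gamma'(\sigma)|}{\sqrt{W(\boldsymbol\gamma(\sigma))}}\,d\sigma, \qquad \boldsymbol\eta(t) := \boldsymbol\gamma(\tau(t)). $$
A direct computation gives $|\boldsymbol\eta'(t)| = \sqrt{W(\boldsymbol\eta(t))}$, so equipartition holds and both inequalities above become equalities, whence $\int |\boldsymbol\eta'|^2 + W(\boldsymbol\eta)\,ds = 2g(\mathbf a,\mathbf b)$ and $\boldsymbol\eta$ is a minimizer whose trajectory is the prescribed geodesic. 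The crucial point is that, since $W$ vanishes quadratically at $\mathbf a$ and $\mathbf b$ by \eqref{cond:W2.1}--\eqref{cond:W2.2}, the integrand $|\boldsymbol\gamma'|/\sqrt{W(\boldsymbol\gamma)}$ behaves like $1/|\boldsymbol\gamma-\mathbf a|$ near the endpoints, so $t(\tau)\to-\infty$ as $\tau\to 0^+$ and $t(\tau)\to+\infty$ as $\tau\to 1^-$; this is precisely what stretches the finite geodesic onto the whole real line. Boundedness of $W$ along the (compact) trajectory gives $|\boldsymbol\eta'|$ bounded, so $\boldsymbol\eta$ is Lipschitz.

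Finally I would derive the exponential estimates \eqref{eq:ex-estimate1}--\eqref{eq:ex-estimate2}. Writing $\psi(t) := g(\mathbf a,\boldsymbol\eta(t)) = \varphi_{\mathbf a}(\boldsymbol\eta(t))$, the equipartition identity together with $|\nabla\varphi_{\mathbf a}| = \sqrt{W}$ from Lemma~\ref{lemma:4.2} yields $\psi'(t) = W(\boldsymbol\eta(t))$. The quadratic bounds \eqref{cond:W2.1} give, for $\boldsymbol u$ near $\mathbf a$, both $W(\boldsymbol u)\ge C|\boldsymbol u-\mathbf a|^2$ and the comparability $c\,|\boldsymbol u-\mathbf a|^2 \le \varphi_{\mathbf a}(\boldsymbol u)\le c'\,|\boldsymbol u-\mathbf a|^2$ of the geodesic distance with the squared Euclidean distance. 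Combining these produces the differential inequality $\psi'(t)\ge \kappa\,\psi(t)$ for $t$ near $-\infty$, whence $\psi(t)\le Ce^{\kappa t}$ by a Gronwall-type integration, and then $|\boldsymbol\eta(t)-\mathbf a|^2 \le \psi(t)/c \le Ce^{\kappa t}$ gives \eqref{eq:ex-estimate1} with $c_1 = \kappa/2$; the estimate \eqref{eq:ex-estimate2} near $+\infty$ is obtained identically using \eqref{cond:W2.2}.

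I expect the main obstacle to be the reparametrization step: one must verify rigorously that the integral defining $t(\tau)$ genuinely diverges at \emph{both} ends — so that the profile lives on all of $\mathbb R$ rather than a finite interval — and that the limits $\boldsymbol\eta(\pm\infty)=\mathbf a,\mathbf b$ are attained with the claimed Lipschitz regularity up to the degenerate endpoints. Both facts, as well as the exponential convergence, hinge on the precise quadratic (non-degenerate) vanishing of $W$ at $\mathbf a$ and $\mathbf b$ recorded in \eqref{cond:W2.1}--\eqref{cond:W2.2}; controlling the behaviour of the minimizing geodesic near these punctures is the delicate analytic ingredient, while the rest is the standard length--energy correspondence.
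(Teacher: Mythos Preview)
Your proof is correct and follows essentially the same approach as the paper: both start from the minimizing geodesic $\boldsymbol\gamma$ of Lemma~\ref{lemma:4.2} and reparametrize it so as to enforce the equipartition identity $|\boldsymbol\eta'|=\sqrt{W(\boldsymbol\eta)}$, then use the AM--GM inequality to conclude optimality with value $2g(\mathbf a,\mathbf b)$. The only stylistic difference is that the paper phrases the reparametrization as solving the ODE $\beta'(t)=\sqrt{W(\boldsymbol\gamma(\beta))}/|\boldsymbol\gamma'(\beta)|$ with $\beta(0)=\tfrac12$, and reads off the exponential decay directly from the fact that the right-hand side vanishes linearly at $\beta=0,1$ (so the equilibria are hyperbolic); you instead write down the inverse map $t(\tau)=\int |\boldsymbol\gamma'|/\sqrt{W(\boldsymbol\gamma)}$ and obtain the exponential rate via a Gronwall argument for $\psi(t)=\varphi_{\mathbf a}(\boldsymbol\eta(t))$. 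These are two presentations of the same mechanism, and your identification of the quadratic vanishing \eqref{cond:W2.1}--\eqref{cond:W2.2} as the analytic crux is exactly right.
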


\begin{proof}
Let $\bs{\gamma}(\cdot)$ be the geodesic curve  in Lemma \ref{lemma:4.2} connecting $\mf{a}$ to $\mf{b}$.  Let us parametrize it by $\beta\in [0,1]$ such that $\bs{\gamma}(0)=\mf{a}$, $\bs{\gamma}(1)=\mf{b}$, $\bs{\gamma}(\cdot)$ is Lipschitz continuous  and $|\bs{\gamma}'(\beta) |\ge c > 0$ for some constant $c$ and for all $\beta \in [0,1]$. 
Now, let us consider  the ODE:
$$
\left\{ \begin{array}{rcl}
 \frac{d\beta}{dt}(t) &=& \frac{\sqrt{W(\boldsymbol\gamma (\beta) ) } }{|\boldsymbol\gamma'(\beta)|} ,\\
 \beta(0) &=& \frac{1}{2} .
 \end{array}
\right.
$$
The right-hand side of this ODE, $f(\beta):= \sqrt{W(\boldsymbol\gamma(\beta))} /|\boldsymbol \gamma'(\beta)| $, is Lipchiptz continuous on $[0,1]$, thus we have local existence and uniqueness of the solution.  Note that  $\beta =0, 1$ are the only two zeros of $f$, thus, from uniqueness, the solution of this ODE stays between $0$ and $1$, as long as it exists. Thereby it exists globally.   Furthermore, 
$\beta(t)\to 0$ (resp. $\beta(t)\to 1$) exponentially fast as $t\to -\infty$ (resp. $t\to \infty$) because $f'(0)$ (resp. $f'(1)\ne 0$), which is, in turn, due to (\ref{cond:W2.1}) (resp. (\ref{cond:W2.2})).   

Now let us define $\boldsymbol\eta(t):=\boldsymbol\gamma(\beta(t))$.  We have
\[
|\bs{\eta}'(t)| = |\bs{\gamma}'(\beta(t))\beta'(t)| =  |\bs{\gamma}'(\beta(t))|\beta'(t)=\sqrt{W(\bs{\gamma}(\beta(t)))}=\sqrt{W(\bs{\eta}(t))}.
\]
Let $\hat{\boldsymbol\eta}\in Lip ((-\infty,\infty), \R_{+}^{3})$ be any curve connecting $\mf{a}$ to $\mf{b}$.   Using the Cauchy inequality,  the fact that  $\boldsymbol\gamma$ is  geodesic and $|\bs{\eta}'|=W(\bs{\eta})$,  we get
$$ \int_{-\infty}^\infty |\hat{\boldsymbol\eta}'|^2 + W(\hat{\boldsymbol\eta})\,dt \ge 2 \int_{-\infty}^\infty \sqrt{  W(\hat{\boldsymbol\eta}) }{  |\hat{\boldsymbol\eta}'| }\,dt
\ge  2 \int_{-\infty}^\infty \sqrt{  W(\boldsymbol\eta) }{  |\boldsymbol\eta'| }\,dt
=\int_{-\infty}^\infty |\boldsymbol\eta'|^2 + W(\boldsymbol\eta)\,dt
$$
Thus, $\boldsymbol\eta$ solves the variational problem.
\end{proof}

Similarly, we also have the lemma for the construction of the one-dimensional profile of the boundary layer due to the difference between the Dirichlet boundary condition $\mathbf u=\mathbf 0$ on $\partial \Omega$ and $\mathbf u=\mathbf a,\,\mathbf b$ in $\Omega$.

\begin{lem}\label{lemma:4.9}
 There exist two  Lipchitz-continuous functions $\boldsymbol \eta_{\mathbf a}:(0,\infty)\to\mathbb R^3_+$  and $\boldsymbol \eta_{\mathbf b}:(0,\infty)\to\mathbb R^3_+$, whose trajectories are  the geodesics with the metric $ds^2 = W ds^2_{Euc}$ connecting $\mf 0$ to $\mf{a}$ and to $\mf b$, respectively. 
They solve the variational problems
$$
 \inf_{\substack{ \boldsymbol \eta(0)=\mathbf 0\\ \boldsymbol\eta(+\infty)=\mathbf  a}}
   \int_0^\infty |\boldsymbol\eta'|^2 + W(\boldsymbol\eta)\,dt
 $$
 and
 $$
 \inf_{\substack{ \boldsymbol \eta(0)=\mathbf 0\\ \boldsymbol\eta(+\infty)=\mathbf  b}}
   \int_0^\infty |\boldsymbol\eta'|^2 + W(\boldsymbol\eta)\,dt,
 $$
 with minimal values
  $2g(\mathbf 0,\mathbf a)$ and $2g(\mathbf 0,\mathbf b)$, respectively.
Furthermore, $\lim_{t\to+\infty}\boldsymbol\eta_{\mathbf a}(t)=\mathbf a$ and $\lim_{t\to+\infty}\boldsymbol\eta_{\mathbf b}(t)=\mathbf b$ are being attained at  exponential rates.
\end{lem}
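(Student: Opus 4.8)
The plan is to mirror the proof of Lemma~\ref{lemma:4.3}, constructing the two boundary profiles separately; I describe the construction of $\boldsymbol\eta_{\mathbf a}$ connecting $\mathbf 0$ to $\mathbf a$, the case of $\boldsymbol\eta_{\mathbf b}$ being identical with $\eqref{cond:W2.2}$ in place of $\eqref{cond:W2.1}$. First I would invoke Lemma~\ref{lemma:4.2} to obtain a minimizing geodesic $\boldsymbol\gamma$ for the degenerate metric $ds^2 = W\,ds^2_{Euc}$ joining $\mathbf 0$ to $\mathbf a$, and reparametrize it over $\beta\in[0,1]$ so that $\boldsymbol\gamma(0)=\mathbf 0$, $\boldsymbol\gamma(1)=\mathbf a$, and $|\boldsymbol\gamma'(\beta)|\ge c>0$ for all $\beta$. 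As in Lemma~\ref{lemma:4.3}, I would then pass to the parametrization making the Euclidean speed equal to $\sqrt{W}$, via the ODE $\tfrac{d\beta}{dt}=\sqrt{W(\boldsymbol\gamma(\beta))}/|\boldsymbol\gamma'(\beta)|=:f(\beta)$, and set $\boldsymbol\eta_{\mathbf a}(t):=\boldsymbol\gamma(\beta(t))$, which yields $|\boldsymbol\eta_{\mathbf a}'(t)|=\sqrt{W(\boldsymbol\eta_{\mathbf a}(t))}$ by the chain rule.

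The essential difference from Lemma~\ref{lemma:4.3}, and the point requiring care, is the \emph{asymmetry} of the two endpoints: $\mathbf a$ is a zero of $W$, whereas $\mathbf 0$ is not, since $W$ vanishes on $\mathbb R^3_+$ only at $\mathbf a$ and $\mathbf b$. Consequently $f(1)=0$ while $f(0)=\sqrt{W(\mathbf 0)}/|\boldsymbol\gamma'(0)|>0$, so $\beta=0$ is \emph{not} an equilibrium of the ODE. I would therefore impose the initial condition $\beta(0)=0$ at the finite time $t=0$, rather than sending a degenerate endpoint to $t=-\infty$ as was done on the two-sided profile. Since $f(0)>0$ the forward solution leaves $0$ transversally and satisfies $\beta(t)>0$ for $t>0$, so $\boldsymbol\eta_{\mathbf a}(0)=\mathbf 0$ is attained exactly. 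The only equilibrium, $\beta=1$, is approached solely as $t\to+\infty$, and the quadratic bound $\eqref{cond:W2.1}$ forces $f'(1)\ne 0$; linearizing gives $1-\beta(t)\sim e^{-c_1 t}$, so $\boldsymbol\eta_{\mathbf a}(t)\to\mathbf a$ at an exponential rate, exactly as in Lemma~\ref{lemma:4.3}.

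For minimality I would argue as before: for any competitor $\hat{\boldsymbol\eta}\in Lip((0,\infty);\mathbb R^3_+)$ with $\hat{\boldsymbol\eta}(0)=\mathbf 0$ and $\hat{\boldsymbol\eta}(+\infty)=\mathbf a$, the elementary inequality $p^2+W\ge 2\sqrt{W}\,p$ applied with $p=|\hat{\boldsymbol\eta}'|$ gives
$$
\int_0^\infty |\hat{\boldsymbol\eta}'|^2 + W(\hat{\boldsymbol\eta})\,dt \;\ge\; 2\int_0^\infty \sqrt{W(\hat{\boldsymbol\eta})}\,|\hat{\boldsymbol\eta}'|\,dt \;\ge\; 2\,g(\mathbf 0,\mathbf a),
$$
the last step being the definition of $g$ as the infimal $W$-length over paths from $\mathbf 0$ to $\mathbf a$ together with the parametrization-invariance of that length. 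Since $\boldsymbol\eta_{\mathbf a}$ satisfies $|\boldsymbol\eta_{\mathbf a}'|=\sqrt{W(\boldsymbol\eta_{\mathbf a})}$ and traces the geodesic $\boldsymbol\gamma$, both inequalities are equalities for it, so its energy equals $2g(\mathbf 0,\mathbf a)$ and it is a minimizer. The main obstacle I anticipate is purely the bookkeeping forced by this asymmetry: checking that the improper integral converges at $+\infty$ and that the left boundary value is genuinely attained. Both follow from the finite-time arrival at $\mathbf 0$ and the exponential decay near $\mathbf a$ coming from $\eqref{cond:W2.1}$, which makes $W(\boldsymbol\eta_{\mathbf a})$ and $|\boldsymbol\eta_{\mathbf a}'|^2$ integrable at infinity.
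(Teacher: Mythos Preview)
Your proposal is correct and follows exactly the approach the paper intends: its proof of Lemma~\ref{lemma:4.9} consists of the single sentence ``The proof is similar to the proof of Lemma~\ref{lemma:4.3},'' and you have carried out precisely that adaptation. In particular, you have correctly identified and handled the one point the paper leaves implicit, namely the asymmetry that $\mathbf 0$ is \emph{not} a zero of $W$, so the ODE $\beta'=f(\beta)$ has no equilibrium at $\beta=0$ and the profile starts at $\mathbf 0$ in finite time with initial condition $\beta(0)=0$, while the exponential approach occurs only at the $\mathbf a$ (resp.\ $\mathbf b$) end via \eqref{cond:W2.1} (resp.\ \eqref{cond:W2.2}).
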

\begin{proof}
The proof is similar to the proof of Lemma \ref{lemma:4.3}.
\end{proof}

\subsection{Proofs of the Main Theorems}
\paragraph{Proof of Theorem \ref{Thm:Gamma-limit}}
\begin{flushleft}{\em Proof of Lower semi-continuity.}\end{flushleft}
\noindent 
1. Let us extend  $\mf{u}_\ep$ and $\mf{u}_0$ trivially to a larger bounded smooth domain $\Omega'$ 
such that  $\overline{\Omega}\subset \subset \Omega'$.  That is, 
$$ \tilde{\mathbf u}_\epsilon(x) = \left\{\begin{array}{ll}
 \mathbf u_\epsilon(x)&\quad\textrm{for }x\in \Omega,\\
 \mathbf 0 &\quad\textrm{for }x\in \Omega'\backslash\Omega,
\end{array}\right.
\qquad\textrm{and}\qquad
\tilde{\mathbf u}_0 (x)= \left\{\begin{array}{ll}
 \mathbf u_0(x)&\quad\textrm{for }x\in \Omega,\\
 \mathbf 0 &\quad\textrm{for }x\in \Omega'\backslash\Omega.
\end{array}\right.
$$
We have $\tilde{\mf{u}}_\epsilon \to \tilde{\mf{u}}_0$ in $(L^2(\Omega'))^3$.  This together with the fact that the two functions $\varphi_{\mathbf a}$ and $\varphi_{\mathbf 0}$ are Lipschitz continuous ( Lemma \ref{lemma:4.2}) lead to
 $$
(\varphi_{\mathbf a}\circ\tilde{\mathbf u}_\epsilon)(x) \to  (\varphi_{\mathbf a}\circ\tilde{\mathbf u}_0)(x)=\left\{\begin{array}{ll}
       0 &\quad x\in U \\
       g(\mathbf a,\mathbf b) &\quad x\in\Omega\backslash U \\
       g(\mathbf a,\mathbf 0) &\quad x\in\Omega'\backslash\Omega
    \end{array}\right.\quad \textrm{in }L^2(\Omega')
$$
and
$$
(\varphi_{\mathbf 0}\circ\tilde{\mathbf u}_\epsilon)(x) \to  (\varphi_{\mathbf 0}\circ\tilde{\mathbf u}_0)(x)=\left\{\begin{array}{ll}
       g(\mathbf 0,\mathbf a) &\quad x\in U \\
       g(\mathbf 0,\mathbf b) &\quad x\in\Omega\backslash U \\
       0 &\quad x\in\Omega'\backslash\Omega
    \end{array}\right.\quad \textrm{in }L^2(\Omega').
$$

\noindent
2. Let $\Lambda_1^\delta$ and $\Lambda_2^\delta$ be two bounded sets defined by
$$
\Lambda_1^\delta = \{x\in\Omega: \textrm{dist}(x,\partial\Omega)\ge \delta\} \quad\textrm{and}\quad
\Lambda_2^\delta = \{x\in \Omega: \textrm{dist}(x,\partial\Omega) < \delta\} \cup (\Omega'\backslash \Omega).
$$
By using the inequality of arithmetic and geometric means and the lower semicontinuity of the BV-norm under $L^1$-convergence, we have
\begin{align*}
 &\liminf_{\epsilon\to 0}\ml G_\epsilon[\mathbf u_\epsilon] \ge 2\liminf_{\epsilon\to 0}\int_\Omega \sqrt{W(\mathbf u_\epsilon (x))}\,|\nabla \mathbf u_\epsilon(x)|\,dx \\
 &= 2\liminf_{\epsilon\to 0} \left( \int_{\Lambda_1^\delta}  \sqrt{W(\tilde{\mathbf u}_\epsilon (x))}\,|\nabla \tilde{\mathbf u}_\epsilon(x)|\,dx
 +\int_{\Lambda_2^\delta}  \sqrt{W(\tilde{\mathbf u}_\epsilon (x))}\,|\nabla \tilde{\mathbf u}_\epsilon(x)|\,dx\right)\\
 &\ge 2\liminf_{\epsilon\to 0}\int_{\Lambda_1^\delta}  \sqrt{W(\tilde{\mathbf u}_\epsilon (x))}\,|\nabla \tilde{\mathbf u}_\epsilon(x)|\,dx
 + 2\liminf_{\epsilon\to 0}\int_{\Lambda_2^\delta}  \sqrt{W(\tilde{\mathbf u}_\epsilon (x))}\,|\nabla \tilde{\mathbf u}_\epsilon(x)|\,dx
\end{align*}
After applying the inequality \eqref{eq:4.6} to the function $\varphi_{\mathbf a}\circ\tilde{\mf{u}}_\epsilon$ on the domain $\Lambda_1^\delta$ and the function $\varphi_{\mathbf 0}\circ\tilde{\mf{u}}_\epsilon$ on the domain $\Lambda_2^\delta$, we find that
\begin{align*}
\liminf_{\epsilon\to 0}\ml G_\epsilon[\mathbf u_\epsilon]
&\ge 2 \liminf_{\epsilon\to 0}
 \int_{\Lambda_1^\delta} | \nabla(\varphi_{\mathbf a}\circ\tilde{\mathbf u}_\epsilon)|\,dx
 +2 \liminf_{\epsilon\to 0}
 \int_{\Lambda_2^\delta} | \nabla(\varphi_{\mathbf 0}\circ\tilde{\mathbf u}_\epsilon)|\,dx\\
 &\ge 2 \left(
  \int_{\Lambda_1^\delta} | \nabla(\varphi_{\mathbf a}\circ\tilde{\mathbf u}_0)|
  +\int_{\Lambda_2^\delta} | \nabla(\varphi_{\mathbf 0}\circ\tilde{\mathbf u}_0)|
  \right).
\end{align*}
Taking $\delta\to 0$, we get
\begin{align*}
\liminf_{\epsilon\to 0}\ml G_\epsilon[\mathbf u_\epsilon]  &\ge  2 g(\mathbf a,\mathbf b) \,\textrm{Per}_\Omega(\mathbf u=\mathbf a)
+ 2 g(\mathbf 0,\mathbf a)\,\mathcal H^2(\{x\in\partial\Omega:\mathbf u=a\})\\
& \qquad
+ 2 g(\mathbf 0,\mathbf b)\,\mathcal H^2(\{x\in\partial\Omega:\mathbf u=b\}).
\end{align*}
\begin{flushright}
{$\Box$}
\end{flushright}

\noindent
{\em Proof of Recovery sequence.} \\
1.  We will construct recovery sequence $\{\mf{v}_\ep\}\subset \ms{A}$ for 
 $\mathbf v_0\in \ms{A}_0$ (\ref{eq:A0}) of the form
 \be
\mathbf v_0 =\left\{\begin{array}{ll}
           \mathbf a &\qquad\textrm{if }x\in V, \\
           \mathbf b &\qquad\textrm{if }x\in \Omega\backslash V,
        \end{array}\right. \label{eq:v0}
\ee
and satisfies $\ml{N}[\mf{v}_0]=N, \ml{M}[\mf{v}_0]=M$. 
We discuss the case when
 $\mf{v}_0\in \ms{A}_0\cap (BV(\Omega))^3$ first. The case when  $\mathbf v_0\in \ms{A}_0\setminus (BV(\Omega))^3$ will be discussed in item 6 below.
In the first case, since $\mf v_0 \in (BV(\Omega))^3$, the set $V$ has finite perimeter in $\Omega$. We may assume that the interface $\Gamma = \partial V \cap\Omega$ is smooth because a set of finite perimeter can be approximated by a sequence of sets with smooth boundary, see \cite{S}.

\noindent
2. The recovery sequence $\mf{v}_\ep$ to be constructed will have the form
\[
\mathbf v_\epsilon(x) = \mf{w}_\epsilon(x) + \alpha_1(\epsilon)\, \boldsymbol\varphi(x) + \alpha_2(\epsilon) \,\boldsymbol\psi(x).
\]
Here, $\mf{w}_\ep$ is a layer solution; $\boldsymbol\varphi(x) $, $\boldsymbol\psi(x)$ are smooth functions supported on $V$ and $\Omega \setminus V$, respectively; the coefficients $\alpha_1(\ep), \alpha_2(\ep)$ are $O(\ep)$.  The terms  $\alpha_1(\epsilon)\, \boldsymbol\varphi(x) + \alpha_2(\epsilon) \,\boldsymbol\psi(x)$ are designed so that the conservation constraints $\ml{N}[\mf{v}_\ep]=N$ and $\ml{M}[\mf{v}_\ep]=M$ are satisfied. Detail conditions for $\boldsymbol\varphi(x) $ and $\boldsymbol\psi(x)$ will be given later.
The layer solution $\mf{w}_\ep$ involves an internal layer function $\boldsymbol{\eta}$ and an auxiliary boundary layer function $\tilde{\boldsymbol\eta}$.  
The internal layer function $\boldsymbol{\eta}$  is defined in Lemma \ref{lemma:4.3}, which connects  $\mf{a}$ to  $\mf{b}$.  The auxiliary boundary layer function 
$\tilde{\boldsymbol\eta}:(-\infty,\infty)\to\mathbb R^3_+$  is defined by
$$
 \tilde{\boldsymbol\eta} (t) = \left\{\begin{array}{ll}
  \boldsymbol\eta_{\mathbf a}(-t) & \qquad\textrm{if }t<0, \\
  \boldsymbol\eta_{\mathbf b}(t) & \qquad \textrm{if }t\ge 0,
 \end{array}\right.
$$
where $\boldsymbol\eta_{\mathbf a}$ and $\boldsymbol\eta_{\mathbf b}$ are defined in Lemma \ref{lemma:4.9}. They are the boundary layer functions connecting  $\mf{0}$ to $\mf{a}$  and to $\mf{b}$, respectively. 
Note that the function $\tilde{\boldsymbol\eta}(t)$ on the negative (resp. positive) half line $\{t< 0\}$ (resp. $ \{ t > 0\}$) is the profile of the boundary layer approaching to the value $\mathbf a$ (resp. $\mathbf b)$. 
Let us also define the signed distance function $d=d_V$ associated with $\p V$  by (\ref{eq:dA}) and an 
 auxiliary distance function associated with $\p \Omega$ by
\[
d_b(x) = \left\{\begin{array}{ll}
-\textrm{dist}(x,\p \Omega) & \mbox{ if } x \in V, \\
\textrm{dist}(x,\p \Omega) & \mbox{ if } x \in \Omega \setminus V.
\end{array}\right.
\]
We choose a cut-off function  $\zeta$, which is  a smooth function such that $0\le \zeta\le 1$ and
$$
\zeta(t) =\left\{\begin{array}{ll}
      1 &\qquad\textrm{if }|t|\le 1,\\
      0&\qquad\textrm{if }|t|\ge 2.
     \end{array}\right.
$$
Let us set $\gamma=\frac{2}{3}$.
Finally, we define the layer solution 
 $\mf{w}_\epsilon$ by
\begin{align*}
 \mf{w}_\epsilon (x) &= \left[1-\zeta\left(\frac{d_b(x)}{\epsilon^\gamma}\right)\right]
     \left\{ \zeta\left(\frac{d(x)}{\epsilon^\gamma}\right)\boldsymbol\eta\left(\frac{d(x)}{\epsilon}\right) + \left[1-\zeta\left(\frac{d(x)}{\epsilon^\gamma}\right)\right]\mathbf v_0(x) \right\}  \\
     & \qquad +  \zeta\left(\frac{d_b(x)}{\epsilon^\gamma}\right)
     \left[1- \zeta\left(\frac{d(x)}{\epsilon^\gamma}\right) \right]
     \tilde{\boldsymbol \eta}\left(\frac{d_b(x)}{\epsilon}\right).
\end{align*}
\begin{figure}[ht]
\begin{center}
\includegraphics[scale=0.5]{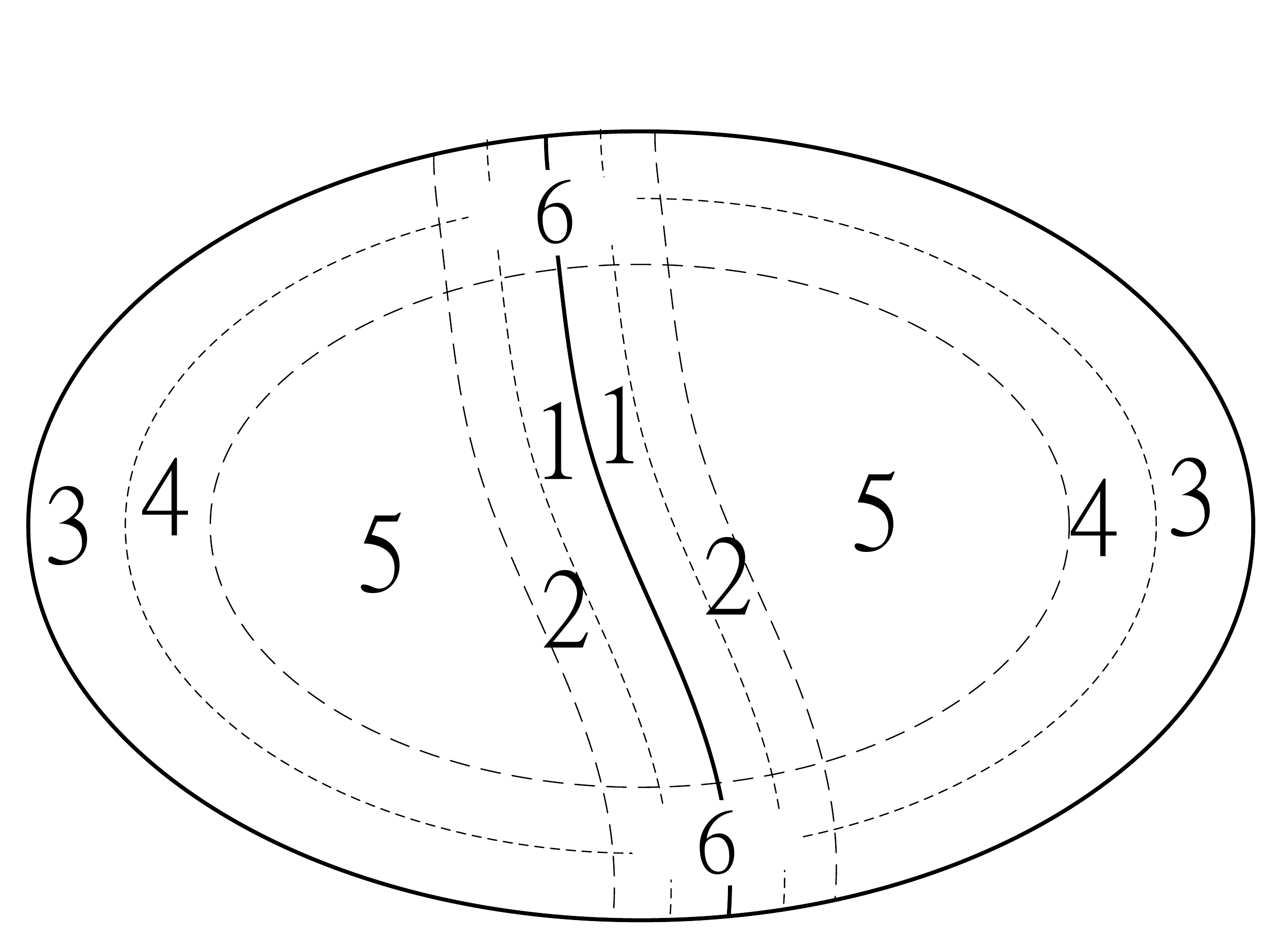}
\caption{The domain represents the domain $\Omega$ ad the region on the left side of the solid line $\partial V\cap\Omega$ represents the domain $V$. The distance between each type of line is $\epsilon^\gamma$. The $k$-region represents the sub-domain $\Omega_k$.}
\label{fig:domain}
\end{center}
\end{figure}

\noindent
3. We claim that the sequence $\{\mf{w}_\epsilon\}$ converges to $\mathbf v_0$ in $\left(L^2(\Omega)\right)^3$
 and
 \be
 {\mathcal{N}}[\mf{w}_\ep] = N+ n_\ep, \qquad {\mathcal{M}}[\mf{w}_\ep] = M+ m_\ep
 \label{eq:N-M-error}
 \ee
 with $n_\epsilon=O(\epsilon)$ and $m_\epsilon=O(\epsilon)$.
 
 We partition the domain $\Omega$ into sub-domains $\Omega_k$  (See  Figure \ref{fig:domain}):
 \begin{align*}
  \Omega_1 &= \{x\in\Omega: |d(x)| < \epsilon^\gamma, |d_b(x)| > 2\epsilon^\gamma\},\\
  \Omega_2 &= \{x\in\Omega:  \epsilon^\gamma \le |d(x)|\le 2\epsilon^\gamma, |d_b(x)| > 2\epsilon^\gamma\},\\
  \Omega_3 &= \{x\in\Omega: |d(x)|>2\epsilon^\gamma, |d_b(x)| < \epsilon^\gamma\},\\
  \Omega_4 &= \{x\in\Omega: |d(x)|>2\epsilon^\gamma, \epsilon^\gamma\le |d_b(x)| \le  2\epsilon^\gamma\},\\
   \Omega_5 &= \{x\in\Omega: |d(x)|>2\epsilon^\gamma, |d_b(x)| > 2\epsilon^\gamma\},\\
  \Omega_6 &= \{x\in\Omega: |d(x)|<2\epsilon^\gamma, |d_b(x)| < 2\epsilon^\gamma\}
 \end{align*}
 and estimate the following integral term by term on each subdomain $\Omega_k$:
$$
 \int_\Omega | \mf{w}_\epsilon - \mathbf v_0|\,dx
 =\sum_{k=1}^6 \int_{\Omega_k} | \mf{w}_\epsilon - \mathbf v_0|\,dx.
$$
We calculate
\begin{eqnarray*}
 \int_{\{x\in\Omega_1\cup\Omega_2:\,0<d(x)<2\epsilon^\gamma\}} | \mf{w}_\epsilon - \mathbf v_0|\,dx
&\le&  \int_{ \{ 0 < d(x) < 2 \epsilon^\gamma\}}\left|  \zeta(\frac{d(x)}{\epsilon^\gamma}) \left( \boldsymbol\eta(\frac{d(x)}{\epsilon})- \mathbf b\right) \right|\,dx \\
&\le&  \int_{ \{ 0 < d(x) < 2\epsilon^\gamma\}} \left|\boldsymbol\eta(\frac{d(x)}{\epsilon}) - \mathbf b\right|\,dx \\
&=& \int_0^{2\epsilon^\gamma} \left|\boldsymbol\eta(\frac{s}{\epsilon}) - \mathbf b\right|\,\mathcal H^2\{d(x)=s\}\,ds \\
&\le& \epsilon\,\left(  \max_{0\le s\le 2\epsilon^\gamma} \mathcal H^2\{d(x)=s\}\right)\, \int_0^{2 \epsilon^{\gamma-1}} \left|\boldsymbol\eta(t) - \mathbf b\right|\,dt
=O(\epsilon)
\end{eqnarray*}
because  the exponential decay estimates \eqref{eq:ex-estimate1}, \eqref{eq:ex-estimate2} and  \eqref{eq:measure}.
Similarly, we have
$$\int_{ \{ x\in\Omega_1\cup\Omega_2:\, -2\epsilon^\gamma < d(x) < 0\}} | \mf{w}_\epsilon - \mathbf v_0|\,dx
   \le \epsilon\,\left(  \max_{- 2\epsilon^\gamma\le s\le 0} \mathcal H^2\{d(x)=s\}\right)\, \int^0_{-2\epsilon^{\gamma-1}} \left|\boldsymbol\eta(t) - \mathbf a\right|\,dt
= O(\epsilon),
 $$
 $$\int_{ \{ x\in\Omega_3\cup\Omega_4:\, -2\epsilon^\gamma < d_b(x) < 0\}} | \mf{w}_\epsilon - \mathbf v_0|\,dx
   \le \epsilon\,\left(  \max_{- 2\epsilon^\gamma\le s\le 0} \mathcal H^2\{d_b(x)=s\}\right)\, \int_0^{2\epsilon^{\gamma-1}} \left|\boldsymbol\eta_{\mathbf a}(t) - \mathbf a\right|\,dt
= O(\epsilon),
 $$
 $$\int_{ \{ x\in\Omega_3\cup\Omega_4: 0 < d(x) < 2 \epsilon^\gamma\}} | \mf{w}_\epsilon - \mathbf v_0|\,dx
   \le \epsilon\,\left(  \max_{ 0 \le s\le 2\epsilon^\gamma} \mathcal H^2\{d(x)=s\}\right)\, \int_0^{2\epsilon^{\gamma-1}} \left|\boldsymbol\eta_{\mathbf b}(t) - \mathbf b\right|\,dt
= O(\epsilon).
 $$
We calculate
$$ \int_{\Omega_5} \left|  \mf{w}_\epsilon - \mathbf v_0\right|\,dx = 0$$
and
$$ \int_{\Omega_6} \left|  \mf{w}_\epsilon - \mathbf v_0\right|\,dx \le \max_{\Omega_6} ( |\mf{w}_\epsilon| + |\mathbf v_0|) |\Omega_6| = O(\epsilon^{2\gamma}).$$
Finally, we calculate
\begin{eqnarray*}
 &&\left| \int_\Omega |\mf{w}_\epsilon|^2 - |\mf{v}_0|^2\,dx\right|
 = \left| \int_\Omega ( |\mf{w}_\epsilon| + |\mf{v}_0|)( |\mf{w}_\epsilon| - |\mf{w}_0|)\,dx\right| \\
 &&\le \max_\epsilon (\|\mf{w}_\epsilon\| + \|\mf{v}_0\|)
    \int_\Omega \left| \mf{w}_\epsilon - \mf{v}_0\right| \,dx
    =O(\epsilon).
\end{eqnarray*}
Similarly, we have
$$
\int_\Omega ( |w_{\epsilon,1}|^2 - |w_{\epsilon,-1}|^2 )
- ( |w_{0,1}|^2 - |w_{0,-1}|^2 )   \,dx = O(\epsilon).
$$

\noindent
4. We rewrite $\ml G_\epsilon[\mf{w}_\epsilon]$ and estimate them term by term:
$$
\ml G_\epsilon[\mf{w}_\epsilon] = \int_\Omega \epsilon |\nabla \mf{w}_\epsilon|^2 + \frac{1}{\epsilon}W(\mf{w}_\epsilon)\,dx
  = \sum_{k=1}^6 \int_{\Omega_k} \epsilon |\nabla \mf{w}_\epsilon|^2 + \frac{1}{\epsilon}W(\mf{w}_\epsilon)\,dx.
$$
 Using the coarea formula, \eqref{eq:measure} and Lemma \ref{lemma:4.3}, we calculate  the first term
\begin{eqnarray*}
 && \lim_{\epsilon\to 0}  \int_{\Omega_1} \epsilon |\nabla \mf{w}_\epsilon|^2 + \frac{1}{\epsilon}W(\mf{w}_\epsilon)\,dx \\
 && = \lim_{\epsilon\to 0} \frac{1}{\epsilon} \int_{\Omega_1}  \left|\boldsymbol\eta'\left(\frac{d(x)}{\epsilon}\right)\right|^2 + W\left(\boldsymbol\eta\left(\frac{d(x)}{\epsilon}\right)\right)\,dx \\
 &&= \lim_{\epsilon\to 0} \frac{1}{\epsilon} \int_{-\epsilon^\gamma}^{\epsilon^\gamma}  \left[  \left|\boldsymbol\eta'\left(\frac{s}{\epsilon}\right)\right|^2 + W\left(\boldsymbol\eta\left(\frac{s}{\epsilon}\right)\right) \right] \,\mathcal H^2\{x\in\Omega_1:\,d(x)=s\} \,ds \\
 &&=\lim_{\epsilon\to 0} \int_{-\epsilon^{\gamma-1}}^{\epsilon^{\gamma-1}} \left( |\boldsymbol\eta'(t)|^2 + W(\boldsymbol\eta(t))\right)  \,\mathcal H^2\{x\in\Omega_1:\,d(x)=\epsilon t\} \,dt \\
 &&= 2 g(\mathbf a,\mathbf b)\,\lim_{\epsilon\to 0}\max_{|s|<\epsilon^\gamma} \mathcal H^2\{x\in\Omega:\,d(x)= s\}\\
 &&\le 2 g(\mathbf a,\mathbf b)\,\textrm{Per}_\Omega(\mf{w}=\mathbf a).
\end{eqnarray*}
Similarly, for boundary layer, we also have
\begin{eqnarray*}
 && \lim_{\epsilon\to 0}  \int_{\Omega_3} \epsilon |\nabla \mf{w}_\epsilon|^2 + \frac{1}{\epsilon}W(\mf{w}_\epsilon)\,dx \\
 && = \lim_{\epsilon\to 0} \frac{1}{\epsilon} \int_{\Omega_3}  \left|\tilde{\boldsymbol\eta}'\left(\frac{d_b(x)}{\epsilon}\right)\right|^2 + W\left(\tilde{\boldsymbol\eta}\left(\frac{d_b(x)}{\epsilon}\right)\right)\,dx \\
 &&= \lim_{\epsilon\to 0} \frac{1}{\epsilon} \int_{-\epsilon^\gamma}^{\epsilon^\gamma}
  \left[  \left| \tilde{\boldsymbol\eta}'\left(\frac{s}{\epsilon}\right)\right|^2 + W\left(\tilde{\boldsymbol\eta}\left(\frac{s}{\epsilon}\right)\right) \right] \,\mathcal H^2\{x\in\Omega_3:\,d_b(x)=s\} \,ds \\
 &&=\lim_{\epsilon\to 0} \int_{-\epsilon^{\gamma-1}}^{\epsilon^{\gamma-1}} \left( |\tilde{\boldsymbol\eta}'(t)|^2 + W(\tilde{\boldsymbol\eta}(t))\right)  \,\mathcal H^2\{x\in\Omega_3:\,d_b(x)=\epsilon t\} \,dt \\
 &&=\lim_{\epsilon\to 0}  \left(\int_{0}^{\epsilon^{\gamma-1}} \left( |\boldsymbol\eta_{\mathbf a}'(t)|^2 + W(\boldsymbol\eta_{\mathbf a}(t))\right)  \,\mathcal H^2\{x\in\Omega_3:\,d_b(x)=-\epsilon t\} \,dt \right. \\
 && \qquad\qquad + \left. \int_{0}^{\epsilon^{\gamma-1}} \left( |\boldsymbol\eta_{\mathbf b}'(t)|^2 + W(\boldsymbol\eta_{\mathbf b}(t))\right)  \,\mathcal H^2\{x\in\Omega_3:\,d_b(x)=\epsilon t\} \,dt   \right) \\
 &&\le 2 g(\mathbf 0,\mathbf a)\,\lim_{\epsilon\to 0}\max_{0<s<\epsilon^\gamma} \mathcal H^2\{x\in\Omega:\,d_b(x)= -s\} \\
 &&\qquad\qquad + 2g(\mathbf 0,\mathbf b)  \,\lim_{\epsilon\to 0}\max_{0<s<\epsilon^\gamma} \mathcal H^2\{x\in\Omega:\,d_b(x)= s\}\\
 &&= 2 g(\mathbf 0,\mathbf a)\,\mathcal H^2(\{x\in\partial\Omega:\,\mathbf v_0(x)=\mathbf a\})
    + 2 g(\mathbf 0,\mathbf b)\,\mathcal H^2(\{x\in\partial\Omega:\,\mathbf v_0(x)=\mathbf b\}).
\end{eqnarray*}

Applying the Taylor expansion of the function $W$ around $\mathbf a$ and $\mathbf b$ and using the exponential decay estimates \eqref{eq:ex-estimate1} and \eqref{eq:ex-estimate2}
\begin{align*}
  & \lim_{\epsilon\to0}  \int_{\Omega_2} \epsilon |\nabla \mf{w}_\epsilon|^2 + \frac{1}{\epsilon}W(\mf{w}_\epsilon)\,dx
 =0, \\
 & \lim_{\epsilon\to0}  \int_{\Omega_4} \epsilon |\nabla \mf{w}_\epsilon|^2 + \frac{1}{\epsilon}W(\mf{w}_\epsilon)\,dx
 =0.
\end{align*}
Since $\mf{w}_\epsilon$ equals to $\mathbf a$ or $\mathbf b$  on $\Omega_5$, we have
$$ \int_{\Omega_5} \epsilon |\nabla \mf{w}_\epsilon|^2 + \frac{1}{\epsilon}W(\mf{w}_\epsilon)\,dx
 =O(\ep).$$

$$
\int_{\Omega_6} \epsilon |\nabla \mf{w}_\epsilon|^2 + \frac{1}{\epsilon}W(\mf{w}_\epsilon)\,dx
 = O(\epsilon)+ O(\epsilon^{2\gamma-1})
$$
Thus, we obtain
$$ \limsup_{\epsilon\to 0} \ml G_\epsilon[{\bf w}_\epsilon] \le  \ml G_0[\mathbf v_0].$$
Combining the result of the lower semi-continutiy, we have
$$  \lim_{\epsilon\to 0} \ml G_\epsilon[{\bf w}_\epsilon] = \ml G_0[\mathbf v_0].$$

\noindent
5. Finally, we  modify the layer function ${\bf w}_\epsilon$ by adding some smooth function with compact support in order to fit the conservation constraints. We choose two smooth functions $\boldsymbol\varphi:\Omega\to \mathbb R^3$ and $\boldsymbol\psi:\Omega\to \mathbb R^3$  such that $\boldsymbol\varphi,\boldsymbol\psi$ satisfy  the following conditions:
\begin{enumerate}
\item[(i)] The function $\boldsymbol\varphi$ has compact support in $V$ and the function $\boldsymbol\psi$ has compact support in $\Omega\backslash V$.
\item[(ii)] There exists $\delta>0$ such that all the components of the function $(\mathbf a+ \gamma \boldsymbol \varphi(x))$ and the function   $(\mathbf b+ \gamma \boldsymbol \psi(x))$ are all nonnegative  on $\Omega$ whenever $|\gamma|<\delta$.
\item[(iii)] The matrix
          $$
           \left(\begin{array}{cc}
            \int_V ( a_1 \varphi_1  +  a_0 \varphi_{0} + a_{-1}\varphi_{-1})\,dx
            & \int_{\Omega\backslash V} (b_1 \psi_1  +  b_0 \psi_{0} + b_{-1}\psi_{-1})\,dx \\
            \int_V (a_1 \varphi_1  - a_{-1}\varphi_{-1})\,dx
            & \int_{\Omega\backslash V} (b_1\psi_1  -b_{-1}\psi_{-1})\,dx
           \end{array}\right)
          $$
          is invertible where $\mathbf a=(a_1,a_0,a_{-1})$ and $\mathbf b=(b_1,b_0,b_{-1})$.
\end{enumerate}

For each $\epsilon$ small enough, we would like to find $\alpha_1,\alpha_2$ such that the function $(\mf{w}_\epsilon + \alpha_1 \boldsymbol\varphi + \alpha_2 \boldsymbol\psi)$ satisfies the constraints of total mass and total magnetization. That is
\begin{equation*}
\left\{\begin{array}{rcl}
 \int_\Omega |w_{\epsilon,1} + \alpha_1 \varphi_1 + \alpha_2 \psi_1|^2 + |w_{\epsilon,0} + \alpha_1 \varphi_0 + \alpha_2 \psi_0|^2
    + |w_{\epsilon,-1} + \alpha_1 \varphi_{-1} + \alpha_2 \psi_{-1}|^2\,dx &=& N , \\
 \int_\Omega    |w_{\epsilon,1} + \alpha_1 \varphi_1 + \alpha_2 \psi_1|^2 - |w_{\epsilon,-1} + \alpha_1 \varphi_{-1} + \alpha_2 \psi_{-1}|^2\,dx &=& M.
\end{array}\right.
\end{equation*}
Because of \eqref{eq:N-M-error}, we obtain the system
\begin{eqnarray*}
&& f_1(\alpha_1,\alpha_2)\equiv\alpha_1^2  \int_\Omega (\varphi_1^2+\varphi_0^2+\varphi_{-1}^2)\,dx
+ 2\alpha_1  \int_\Omega ( w_{\epsilon,1}\varphi_1  +  w_{\epsilon,0}\varphi_{0} + w_{\epsilon,1}\varphi_{-1})\,dx\\
&&\qquad\qquad +\alpha_2^2  \int_\Omega (\psi_1^2+\psi_0^2+\psi_{-1}^2)\,dx
+ 2\alpha_2 \int_\Omega (w_{\epsilon,1}\psi_1  +  w_{\epsilon,0}\psi_{0} + w_{\epsilon,-1}\psi_{-1})\,dx
= -n_\epsilon, \\
&&f_2(\alpha_1,\alpha_2)\equiv \alpha_1^2  \int_\Omega (\varphi_1^2 - \varphi_{-1}^2)\,dx
+ 2\alpha_1 \int_\Omega (w_{\epsilon,1}\varphi_1  - w_{\epsilon,-1}\varphi_{-1})\,dx\\
&&\qquad \qquad +\alpha_2^2  \int_\Omega (\psi_1^2 - \psi_{-1}^2)\,dx
+ 2\alpha_2 \int_\Omega (w_{\epsilon,1}\psi_1  - w_{\epsilon,-1}\psi_{-1})\,dx
= -m_\epsilon.
\end{eqnarray*}
It is observed that $(\alpha_1,\alpha_2)=(0,0)$ is a solution of the system
$$
 \left\{\begin{array}{rcl}
  f_1(\alpha_1,\alpha_2) &=& 0,\\
  f_2(\alpha_1,\alpha_2)&=& 0,
 \end{array}\right.
$$
and the Jacobian matrix
$$
\left(\begin{array}{cc}
  \frac{\partial f_1}{\partial \alpha_1}(0,0) & \frac{\partial f_1}{\partial \alpha_2}(0,0)\\
  \frac{\partial f_2}{\partial \alpha_1}(0,0) &  \frac{\partial f_2}{\partial \alpha_2}(0,0)
 \end{array}\right)
 = 2
           \left(\begin{array}{cc}
            \int_V ( a_1 \varphi_1  +  a_0 \varphi_{0} + a_{-1}\varphi_{-1})\,dx
            & \int_{\Omega\backslash V} (b_1 \psi_1  +  b_0 \psi_{0} + b_{-1}\psi_{-1})\,dx \\
            \int_V (a_1 \varphi_1  - a_{-1}\varphi_{-1})\,dx
            & \int_{\Omega\backslash V} (b_1\psi_1  -b_{-1}\psi_{-1})\,dx
           \end{array}\right)
$$
 is invertible. According to the Inverse function theorem, for $\epsilon$ small enough the system
 $$
 \left\{\begin{array}{rcl}
  f_1(\alpha_1,\alpha_2) &=& -n_\epsilon,\\
  f_2(\alpha_1,\alpha_2)&=& -m_\epsilon,
 \end{array}\right.
$$
 is solvable. Furthermore, for each $\epsilon$ small, the corresponding $\alpha_1(\epsilon),\alpha_2(\epsilon)$ are of order $O(\epsilon^{2\gamma})$. Define the recovering sequence $\{\mathbf v_\epsilon\}$ by
 $$ \mathbf v_\epsilon(x) = \mf{w}_\epsilon(x) + \alpha_1(\epsilon)\, \boldsymbol\varphi(x) + \alpha_2(\epsilon) \,\boldsymbol\psi(x).$$
By the choice of the function $\mathbf v_\epsilon$, it will satisfy the constraints of total mass and total magnetization. Finally, we calculate

\begin{eqnarray*}
 \lim_{\epsilon\to 0}\ml G_\epsilon[\mathbf v_\epsilon] &=&  \lim_{\epsilon\to 0}\int_\Omega \epsilon \left|\nabla (\mf{w}_\epsilon + \alpha_1(\epsilon)\, \boldsymbol\varphi + \alpha_2(\epsilon) \,\boldsymbol\psi ) \right|^2 + \frac{1}{\epsilon} W(\mf{w}_\epsilon + \alpha_1(\epsilon)\, \boldsymbol\varphi + \alpha_2(\epsilon) \,\boldsymbol\psi)\,dx \\
  &=&   \lim_{\epsilon\to 0} \int_{\Omega_5} \epsilon \left| \nabla(\mf{w}_\epsilon + \alpha_1(\epsilon)\, \boldsymbol\varphi + \alpha_2(\epsilon) \,\boldsymbol\psi )\right|^2 + \frac{1}{\epsilon} W(\mf{w}_\epsilon + \alpha_1(\epsilon)\, \boldsymbol\varphi + \alpha_2(\epsilon) \,\boldsymbol\psi)\,dx \\
  &&+ \lim_{\epsilon\to 0}\int_{\Omega\backslash\Omega_5} \epsilon \left|\nabla(\mf{w}_\epsilon + \alpha_1(\epsilon)\, \boldsymbol\varphi + \alpha_2(\epsilon) \,\boldsymbol\psi )\right|^2 + \frac{1}{\epsilon} W(\mf{w}_\epsilon + \alpha_1(\epsilon)\, \boldsymbol\varphi + \alpha_2(\epsilon) \,\boldsymbol\psi)\,dx
\end{eqnarray*}
 When $\epsilon$ is small enough, the support of the function $\boldsymbol\varphi$ and $\boldsymbol\psi$ is contained in $\Omega_5$.
We find that
\begin{eqnarray*}
 && \lim_{\epsilon\to 0}\int_{\Omega_5} \epsilon \left| \nabla(\mf{w}_\epsilon + \alpha_1(\epsilon)\, \boldsymbol\varphi + \alpha_2(\epsilon) \,\boldsymbol\psi )\right|^2 + \frac{1}{\epsilon} W(\mf{w}_\epsilon + \alpha_1(\epsilon)\, \boldsymbol\varphi + \alpha_2(\epsilon) \,\boldsymbol\psi)\,dx \\
 &=& \lim_{\epsilon\to 0}\int_{\Omega_5} \frac{1}{\epsilon} W(\mf{w}_\epsilon + \alpha_1(\epsilon)\, \boldsymbol\varphi + \alpha_2(\epsilon) \,\boldsymbol\psi)\,dx \\
 &\le& C  \lim_{\epsilon\to 0}\int_{\Omega_5}  \frac{1}{\epsilon}  | \alpha_1(\epsilon)\, \boldsymbol\varphi + \alpha_2(\epsilon) \,\boldsymbol\psi|^2\,dx\\
 &=& O(\epsilon).
\end{eqnarray*}
Since $ \alpha_1(\epsilon)\, \boldsymbol\varphi + \alpha_2(\epsilon) \,\boldsymbol\psi =0$ on $\Omega\backslash\Omega_5$ for $\epsilon$ small enough, we obtain
$$ \lim_{\epsilon\to 0}\ml G_\epsilon[\mathbf v_\epsilon] = \ml G_0[\mathbf v_0].$$
Thus,  $\{\mf{v}_\ep\}$  is a recovery sequence for the functional $\ml G_0$ about $\mf{v}_0\in \ms{A}_0\cap BV(\Omega)$. 
  
\noindent
6. Finally, we need to construct a recovery sequence $\{\mf{v}_\ep\}\subset\ms{A}$ for the case when $\mf{v}_0\in \ms{A}_0\backslash BV(\Omega)$. This means that the set $V\!:=\{\mf v_0=\mf a\}$ has an infinite perimeter.  Our approach is to construct a sequence $\{\tilde{\mf{v}}_\ep\}$ of the form 
$$ \tilde{\mathbf v}_\epsilon(x) = \mf{w}_\epsilon(x) + \alpha_1(\epsilon)\, \boldsymbol\varphi(x) + \alpha_2(\epsilon) \,\boldsymbol\psi(x),$$
then to choose $\mf v_\ep = (|\tilde{v}^\ep_1|,|\tilde{v}^\ep_0|,|\tilde{v}^\ep_{-1}|)$.
Here, the sequence $\{\mf{w}_\ep\}\subset (C^\infty(\Omega))^3$ is taken to be $\mf w_\ep := \mf v_0*\phi_\ep$, where $\phi_\ep$ is a standard modifier; and $\boldsymbol\varphi:\Omega\to \mathbb R^3$ and $\boldsymbol\psi:\Omega\to \mathbb R^3$ are two smooth functions chosen in a way that the matrix
$$
           \left(\begin{array}{cc}
            \int_\Omega ( v^0_1 \varphi_1  +  v^0_0 \varphi_{0} + v^0_{-1}\varphi_{-1})\,dx
            & \int_\Omega (v^0_1 \psi_1  +  v_0 \psi_{0} + v^0_{-1}\psi_{-1})\,dx \\
            \int_\Omega (v^0_1 \varphi_1  - v^0_{-1}\varphi_{-1})\,dx
            & \int_\Omega (v^0_1\psi_1  -v^0_{-1}\psi_{-1})\,dx
           \end{array}\right)
          $$
          is invertible. The coefficients $\alpha_i(\ep)$ are chosen so that $\tilde{\mathbf v}_\epsilon(x)$ satisfies the conservation constraints.
It is clear that  $\mf{w}_\ep\to \mf{v}_0$ in $(L^2(\Omega))^3$. 
We thus get $$\ml N[\mf w_\ep] \to \ml N[\mf v_0] = N, \quad \ml M[\mf w_\ep] \to \ml M[\mf v_0] = M.$$   
By the implicit function theorem, we obtain that there exists some $\ep_0>0$ and  two corresponding functions $\alpha_1(\ep), \alpha_2(\ep)$ for $0\le \ep<\ep_0$ with $\alpha_i(\ep)\to0$ as $\ep \to 0$ such that  the corresponding function $\tilde{\mf{v}}_\ep$ satisfy
$$
 {\mathcal{N}}[\tilde{\mf{v}}_\ep] = N, \qquad {\mathcal{M}}[\tilde{\mf{v}}_\ep] = M.
$$
From $\alpha_i(\ep)\to 0$ as $\ep \to 0$ and $\mf w_\ep \to \mf v_0$, we get $\tilde{\mf v}_\ep \to \mf v_0$ in $(L^2(\Omega))^3$.  Now set $\mf{v}_\ep = (|\tilde{v}^\ep_1|,|\tilde{v}^\ep_0|,|\tilde{v}^\ep_{-1}|)$.  
It is obvious that the sequence $\{\mf{v}_\ep\}$ also converges to $\mf{v}_0$ in $(L^2(\Omega))^3$ and satisfies the conservation constraints
$$
 {\mathcal{N}}[\mf{v}_\ep] = N, \qquad {\mathcal{M}}[\mf{v}_\ep] = M.
$$
By the lower-semicontinuity of $\ml G_\ep$ that has been proved in the first part of this theorem, we have 
$$ +\infty=2 g(\mf{a},\mf{b})\textrm{Per}(\mf{v}_0=\mf{a})\le \ml{G}_0(\mf{v}_0)\le \liminf_{\ep\to 0} \ml{G}_\ep(\mf{v}_\ep).$$
This implies that
$$ \lim_{\ep\to 0} \ml{G}_\ep(\mf{v}_\ep)=+\infty=\ml{G}_0(\mf{v}_0).$$
This completes the proof of the $\Gamma$-convergence of the sequence $\{\ml G_\epsilon\}$ to $\ml G_0$. 
\begin{flushright}
{$\Box$}
\end{flushright}

\paragraph{Proof of Theorem \ref{Thm:compactness}}
  Set $R_1:=\max(R,|\mathbf a|+1,|\mathbf b|+1)$ where $R$ is given in \eqref{cond:W3} and define a truncating function by
 $$ \mathbf v_\epsilon:=\mathbf u_\epsilon\,\chi_{|\mathbf u_\epsilon|\le R_1}.$$
 By using \eqref{cond:W3} and the assumption \eqref{eq:boundedness}, we find
 \begin{align}
  \int_\Omega |\mathbf v_\epsilon - \mathbf u_\epsilon|^2 &= \int_{\{|\mathbf u_\epsilon|>R_1\}} |\mathbf u_\epsilon|^2\,dx \nonumber \\
  &\le \frac{1}{C} \int_{\{|\mathbf u_\epsilon|>R_1\}} W(\mathbf u_\epsilon) \,dx \nonumber \\
  &\le \frac{\epsilon}{C}\ml G_\epsilon[\mathbf u_\epsilon]= \frac{\epsilon C_0}{C}\to 0 \label{eq:v-u}
 \end{align}
 as $\epsilon\to 0$. Therefore, we only need to show the precompactness of the $L^\infty$-sequence $\{\mathbf v_\epsilon\}$ in $(L^2(\Omega))^3$.  In order to achieve this, we will apply the compactness result for Young measures: the $L^\infty$-boundedness of the sequence $\{\mathbf v_\epsilon\}$ implies there exist a subsequence $\{\mathbf v_{\epsilon_j}\}$ and a Young measure $\mu$ such that
 \be
  \textrm{ if $f$ is a continuous function on $\mathbb R^3$, then $f(\mathbf v_\epsilon)\to\left(x\to \int_\Omega f(y)\,d\mu_x(y)\right)$ in $L^\infty$ weakly*. }\label{eq:young-measure}
 \ee
 In the light of \eqref{eq:v-u}, we conclude that the measure of the set $\{x\in\Omega:\,|\mathbf u_\epsilon(x)| > R_1 \}$ tends to zero. We find
 \begin{eqnarray*}
 \lim_{\epsilon\to 0}\int_\Omega W(\mathbf v_\epsilon)\,dx &=& \lim_{\epsilon\to 0} \left( \int_{\{|\mathbf u_\epsilon|\le R_1\}} W(\mathbf u_\epsilon)\,dx + \int_{\{|\mathbf u_\epsilon| > R_1 \}}  W(\mathbf 0)\,dx  \right) \\
 &\le& \lim_{\epsilon\to 0} \int_\Omega W(\mathbf u_\epsilon)\,dx \\
 &\le& \epsilon\, \ml G_\epsilon[\mathbf u_\epsilon]\to 0.
 \end{eqnarray*}
Because of \eqref{cond:W1}, we know that
 \be \mu_x =\theta(x)\,\delta_{y=\mathbf a} + (1-\theta(x))\,\delta_{y=\mathbf b}\quad\textrm{a.e. }x\in \Omega
\label{eq:4.21}
 \ee
 where $0\le \theta(x)\le 1$. 
 
On the other hand, from  Lemma \ref{lemma:4.2} we know that $\varphi_{\mathbf a}$ is Lipchitz continuous and leads to the $L^\infty$-boundedness of the sequence $\{\varphi_{\mathbf a}\circ\mathbf v_\epsilon\}$. Thus, there exist a subsequence $\{\varphi_{\mathbf a}\circ\mathbf v_{\epsilon_k}\}$ and a Young measure $\nu_x$ such that the subsequence $\{\varphi_{\mathbf a}\circ\mathbf v_{\epsilon_k}\}$ converges to a Young measure $\nu_x$. Because of \eqref{eq:4.21}, we can express the Young measure $\nu_x$ as
 $$ \nu_x =\theta(x)\,\delta_{y=\varphi_{\mathbf a}(\mathbf a)} + (1-\theta(x))\,\delta_{y=\varphi_{\mathbf a}(\mathbf b)}\quad\textrm{a.e. }x\in \Omega.$$
 Now, we are going to show the sequence $\{\varphi_{\mathbf a}\circ\mathbf u_\epsilon\}$ is bounded in $BV(\Omega)$. First, we estimate 
 \begin{eqnarray*}
 \int_\Omega \left| \varphi_{\mathbf a}(\mathbf u_\epsilon(x)) \right|\,dx 
 &\le& \int_\Omega  \left|\varphi_{\mathbf a}(\mathbf v_\epsilon(x))\right|\,dx +  \int_\Omega \left| \varphi_{\mathbf a}(\mathbf u_\epsilon(x)) - \varphi_{\mathbf a}(\mathbf v_\epsilon(x))\right|\,dx  \\
&\le&  |\Omega| \|\varphi_{\mathbf a}\circ\mathbf v_\epsilon\|_{L^\infty} + \int_\Omega |\varphi_{\mathbf a}'| \left| \mathbf u_\epsilon-\mathbf v_\epsilon\right|\,dx 
 \end{eqnarray*}
 and find $\varphi_{\mathbf a}\circ\mathbf u_\epsilon$ is bounded in $L^1(\Omega)$ since the function $\varphi_{\mathbf a}(x)$ is Lipchitz continuous on $\mathbb R^3$ and $\| \mathbf u_\epsilon-\mathbf v_\epsilon\|_{L^2}\to 0$ as $\epsilon\to 0$.
 Next, we estimate
 \begin{eqnarray*}
  \int_\Omega |\nabla (\varphi_{\mathbf a}\circ\mathbf u_\epsilon)(x))|\,dx &\le& \int_\Omega \sqrt{W(\mathbf u_\epsilon(x))}\,|\nabla \mathbf u_\epsilon| \,dx \\
   &\le&\frac{1}{2} \int_\Omega \epsilon|\nabla\mathbf u_\epsilon|^2 + \frac{1}{\epsilon} W(\mathbf u_\epsilon)\,dx \\
   &\le& \frac{1}{2} \ml G_\epsilon[\mathbf u_\epsilon]\le \frac{C_0}{2}<+\infty.
 \end{eqnarray*}
Therefore, the sequence $\{\varphi_{\mathbf a}\circ\mathbf u_\epsilon\}$ is bounded in $BV$-norm.  There exists a subsequence (without abusing the notation, we still denote it by the same sequence) and a function $h\in L^1(\Omega)$ such that
$$ \varphi_{\mathbf a}\circ\mathbf u_\epsilon\to h \qquad\textrm{in }L^1(\Omega).$$
Since the function $\varphi_{\mathbf a}$ is Lipchitz continuous on $\mathbb R^3$ and $\| \mathbf u_\epsilon-\mathbf v_\epsilon\|_{L^2}\to 0$ as $\epsilon\to 0$, we have
$$ \varphi_{\mathbf a}\circ\mathbf v_\epsilon\to h \qquad\textrm{in }L^1(\Omega).$$
In consequence, 
the Young measure $\nu_x$ associated with $\varphi_{\mathbf a}\circ\mathbf v_\epsilon$ is just a point mass a.e. $x\in \Omega$, that is
$$ \nu_x=\delta_{h(x)}\quad\textrm{a.e. }x\in \Omega$$
 and the function $\theta(x)=\chi_U$ for some measurable set $U\subset\Omega$.
 Thus, the corresponding Young measure $\mu_x$ could be represented by
 the function
$$ \mathbf u_0:=\mathbf a\,\chi_U  + \mathbf b\,\chi_{\Omega\backslash U}.$$
The definition of the convergence in Young measures to a function $\mathbf u_0$ give us that
$$ \mathbf v_\epsilon\to \mathbf u_0 \qquad\textrm{weakly in }L^p(\Omega) \textrm{ for }1\le p<\infty$$
and
$$ \|\mathbf v_\epsilon\|_{L^p} \to \|\mathbf u_0\|_{L^p}\qquad\textrm{ for }1\le p<\infty.$$
It follows that $\mathbf v_\epsilon$ converges to $\mathbf u_0$ strongly in $(L^p(\Omega))^3$ for $1\le p<\infty$. From \eqref{eq:v-u}, we have
$\mathbf u_\epsilon$ converges to $\mathbf u_0$ in $(L^2(\Omega))^3$. Because $\mf{u}_0$ also satisfies the conservation constraints $ {\mathcal{N}}[\mf{u}_0] = N$ and ${\mathcal{M}}[\mf{u}_0] = M.
$, we can conclude $\mf{u}_0\in\ms{A}_0$.
 This completes the proof.
\begin{flushright}
{$\Box$}
\end{flushright}

\paragraph{Acknowledge} The authors thank the National Center for Theoretical Sciences of the Republic of China for helpful supports. The authors would like to thanks Prof.~Peter Sternberg for his helpful suggestions.  I.C.~and C.C.~are partially supported by the Ministry of Science and Technology of the Republic of China (NSC102-2115-M-002-016-MY2, NSC 102-2811-M-009-030)   
\bibliographystyle{plain}
\bibliography{../bec}

\end{document}